\theoremstyle{plain}
\def\theequation{\thesection.\arabic{equation}}
\newtheorem{theorem}[equation]{Theorem}
\newtheorem*{theorem*}{Theorem}
\newtheorem{problem}[equation]{Problem}
\newtheorem{assumption}[equation]{Assumption}
\newtheorem{conjecture}[equation]{Conjecture}
\newtheorem{proposition}[equation]{Proposition}
\newtheorem{lemma}[equation]{Lemma}
\newtheorem{corollary}[equation]{Corollary}
\theoremstyle{definition}
\newtheorem{remark}[equation]{Remark}
\newtheorem*{claim}{Claim}
\newtheorem{definition}[equation]{Definition}
\newtheorem*{strategy}{Strategy}
\newtheorem*{definition*}{Definition}
\newtheorem{notation}[equation]{Notation}
\newtheorem{example}[equation]{Example}
\newtheoremstyle{namedthm}{}{}{\slshape}{}{\bfseries}{. }{0em}%
  {\thmnote{#3}\thmnumber{ #2}}
\theoremstyle{namedthm}
\newtheorem*{named*}{}
\newcommand{\vabove}[2]{\genfrac{}{}{0pt}{}{#1}{#2}}
\def\smallmatrix#1#2#3#4{\genfrac{(}{.}{0pt}{1}{#1}{#3}\genfrac{.}{)}{0pt}{1}{#2}{#4}}
\def\leftchoice#1#2#3#4{{\def\arraystretch{0.7}
\Bigl\{\!\!\begin{array}{ll}
   \scriptstyle #1,\!\!\!&\scriptstyle #2\cr
   \scriptstyle #3,\!\!\!&\scriptstyle #4\end{array}}}
\def\bigleftchoice#1#2#3#4{{\def\arraystretch{1}
   \biggl\{\!\!\begin{array}{ll}#1,&#2\cr#3,&#4\end{array}}}
\def\rksel#1#2#3{\rk_{#3}(#1/#2)}
\def\lara{\langle,\rangle}
\def\blangle{\boldsymbol{\langle}}
\def\brangle{\boldsymbol{\rangle}}
\let\iff\Leftrightarrow
\let\liff\Longleftrightarrow
\def\injects{\lhook\joinrel\rightarrow}
\let\surjects\twoheadrightarrow
\let\lar\longrightarrow
\let\iso\cong
\let\tensor\otimes
\let\normal\triangleleft
\def\inputdocument#1{\begingroup
  \def\documentclass{%
  \let\oldinput=\input\let\input=\skiptoken
  \let\oldusepackage=\input\let\usepackage=\skiptoken
  \futurelet\comingchar\andocstyle}%
  \def\andocstyle{\ifx\comingchar[\let\skipdoc=\skipdocstyle\else\let
  \skipdoc=\skiptoken\fi\skipdoc}%
  \def\document{\let\input=\oldinput\let\usepackage=\oldusepackage}%
  \def\enddocument{}%
  \input{#1}\endgroup}
\def\skiptoken#1{}
\def\skipdocstyle[#1]#2{}
\def\beq{$$\begin{array}{llllllllllllllll}}
\def\eeq{\end{array}$$}
\def\beqn{\begin{equation}\begin{array}{llllllllllllllll}}
\def\eeqn{\end{array}\end{equation}}
\def\beql#1{\begin{equation}\label{#1}\displaystyle}
\def\eeql{\end{equation}}
\font\tencyr=wncyr10
\def\sha{\text{\tencyr Sh}}
\def\ufootnote#1{\insert\footins{\noindent\footnotesize{{\hskip
  1.5em}\llap{${}^{\vphantom a}$}#1}}}
\def\tbuildrel#1\over#2{\buildrel\text{\rm\normalsize\smaller[3]#1}\over{#2}}
\def\thincdots{\raise1.3pt\hbox{$\scriptscriptstyle
  \>\cdot\>\cdot\>\cdot\>\cdot\hskip0.3pt$}}
\def\<{\raise0.5pt\hbox{$\,\scriptstyle<\,$}}
\def\arrowlim#1#2{\mathop{\underset{\scriptstyle #1}{\underset
    {\raisebox{0ex}[0.25ex][-0.5ex]{$#2$}}{\operatorname{lim}}}}}
\newcommand{\invlim}[1][]{\arrowlim{#1}{\longleftarrow}}        
\def\bb@symb#1|#2{\expandafter\def\csname #2#1\endcsname{\mathbb{#1}}}
\def\bbsymbols#1#2{\@for\@tmpz:=#2\do{\expandafter\bb@symb\@tmpz|{#1}}}
\def\cal@symb#1|#2{\expandafter\def\csname #2#1\endcsname{\mathcal{#1}}}
\def\calsymbols#1#2{\@for\@tmpz:=#2\do{\expandafter\cal@symb\@tmpz|{#1}}}
\def\dmth@p#1|{\expandafter\let\csname#1\endcsname\relax
  \expandafter\DeclareMathOperator\csname#1\endcsname{#1}}
\def\operators#1{\@for\@tmpz:=#1\do{\expandafter\dmth@p\@tmpz|}}
\let\Re\vRe
\def\Ql{\Q_l}
\def\Zl{\Z_l}
\def\Qp{\Q_p}
\def\Zp{\Z_p}
\def\p{{\mathfrak p}}
\def\triv{{\mathbf 1}}
\def\kbar{\bar k}
\def\Kpbar{\overline{K_\p}}
\def\Kp{K_\p}
\def\GKp{G_{\Kp}}
\def\IKp{I_{\Kp}}
\def\rkalg#1#2{\rk #1/#2}
\def\rksel#1#2#3{\rk_{#3}#1/#2}
\def\daggerequation#1#2#3{\begingroup      
  \def\theequation{#2}
  \begin{equation}
  \label{#1}
    #3
  \end{equation}\endgroup
  \addtocounter{equation}{-1}}
\def\CC#1#2{{C_{\hbox{\tiny$\scriptscriptstyle #1/#2$}}}}
\def\smminusone{(\hskip -0.15mm\raise0.243em\hbox to 5pt{\hrulefill}\hskip 0.15mm1\hskip -0.15mm)}
\def\Esmslash{\hskip -0.15mmE\hskip -0.3mm/\hskip -0.3mm}
\def\smallcoprod{\raise1.3pt\hbox{$\,\scriptstyle\coprod\,$}}
\begin{document}
\let\oldthepage\thepage\def\thepage{}

\title{Notes on the Parity Conjecture}
\author{Tim Dokchitser}
\address{Robinson College, Cambridge CB3 9AN, United Kingdom}
\date{September 19, 2010}
\email{t.dokchitser@dpmms.cam.ac.uk}
\maketitle
\ufootnote{The author is supported by a Royal Society University Research Fellowship}

\vfill

The main purpose of these notes is to prove, in a reasonably self-contained
way, that finiteness of the Tate-Shafarevich group
implies the parity conjecture for
elliptic curves over number fields.
Along the way, we review local and global root numbers of elliptic
curves and their classification, and we end by discussing
some peculiar consequences of the parity conjecture.

Essentially nothing here is new,
and the notes follow closely the papers
\cite{Isogroot}--\cite{Antifor},
all joint with Vladimir Dokchitser.
There are only some additional shortcuts
replacing a few technical computations of root numbers and
Tamagawa numbers by a `deforming to totally real fields' argument.
Also, this is not meant to be a complete survey of results on the
parity conjecture, and many important results, especially those
concerning Selmer groups, are not mentioned.

The exposition is based on the lectures given at CRM Barcelona in
December 2009.
It is a great pleasure to thank CRM for its warm hospitality and
Victor Rotger and Luis Dieulefait for organising the lecture series.
I~would also like to thank its participants, especially
Anton Mellit, Ravi Ramakrishna and Thomas de la Rochefoucauld
for many useful comments, Alex Bartel for his helpful lecture series
preceding this one, and Vladimir Dokchitser for proofreading the
manuscript.

\vfill
\vfill

\newpage

\llap{.\hskip 10cm}
\vfill
\tableofcontents
\vfill

\newpage

\let\thepage\oldthepage

\section{Birch-Swinnerton-Dyer and Parity}

\subsection{Conjectures and the main result}
\label{ssmainres}

Throughout the notes $K$ denotes a number field.
Suppose $E/K$ is an elliptic curve, say in Weierstrass form,
$$
  y^2=x^3+ax+b, \qquad a,b\in K.
$$
The set $E(K)$ of $K$-rational solutions $(x,y)$ to this equation together
with a point at infinity $O$ forms an abelian group, which is finitely
generated by famous theorems of Mordell and Weil.
The primary arithmetic invariant of $E/K$ is its rank:

\begin{definition*}
The {\em Mordell-Weil rank\/} $\rk E/K$ is the $\Z$-rank of $E(K)/$torsion.
\end{definition*}

The celebrated conjecture of Birch and Swinnerton-Dyer relates it to
another fundamental invariant, the {\em $L$-function\/} $L(E/K,s)$.
We define it in \S\ref{s:root},
together with its {\em conductor} $N\in\Z$ and the
{\em global root number} $w(E/K)=\pm 1$.
For now it suffices to say that
$L(E/K,s)$ is given by a Dirichlet series,
$$
  L(E/K,s) = \sum_{n=1}^\infty a_n n^{-s}
$$
with $a_n\in\Z$, and it converges for $\Re s>3/2$.

\begin{named*}[Hasse-Weil Conjecture]
The function $L(E/K,s)$ has an analytic continuation to the whole of $\C$.
The completed $L$-function
$$
  L^*(E/K,s) = \Gamma(\tfrac s2)^d\Gamma(\tfrac{s+1}2)^d
    \pi^{-ds} N^{s/2} L(E/K,s)
    \qquad\qquad(d=[K:\Q])
$$
satisfies a functional equation
$$
  L^*(E/K,2-s) = w(E/K) L^*(E/K,s).
$$
\end{named*}

Granting the analytic continuation, we may state

\begin{named*}[Birch-Swinnerton-Dyer Conjecture I]
$$
  \ord_{s=1}L(E/K,s) = \rk E/K.
$$
\end{named*}

This remarkable conjecture relates arithmetic properties of $E$ to
analytic properties of its $L$-function. It was
originally stated over the rationals, extended
to all abelian varieties over global fields by Tate and vastly
generalised by Deligne, Gross, Beilinson, Bloch and Kato.
One immediate consequence of the two conjectures above is

\begin{named*}[Parity Conjecture]
$(-1)^{\rk E/K}=w(E/K)$.
\end{named*}

In particular, an elliptic curve whose root number is $-1$ must have
infinitely many rational points. This is a purely arithmetic statement
which does not involve $L$-functions and it might appear to be simpler
than the two conjectures above. However, it is remarkably hard, and
we have no approach to resolve it in any kind of generality.
It has several important consequences: for instance,
it settles Hilbert's 10th problem over rings of integers of
arbitrary number fields \cite{MR3} and implies most remaining cases of
the congruent number problem.  

The difficulty is that we know virtually nothing about the rank of a general
elliptic curve, as it is very hard to distinguish rational points from the
elements of $\sha$:

\begin{definition*}
The {\em Tate-Shafarevich group} of $E/K$ is defined by
$$
  \sha_{E/K} = \ker \bigl(
    H^1(K,E(\bar K))\!\to\!\prod_v H^1(K_v,E(\bar K_v))
  \bigr).
$$
\end{definition*}

The famous Shafarevich-Tate conjecture asserts that $\sha_{E/K}$ is finite.
The main result result of these notes is that its finiteness does imply
the parity conjecture. (Over function fields, Artin and Tate \cite{TatC}
showed that the full Birch--Swinnerton-Dyer conjecture follows from
finiteness of $\sha$.)

We write $E[n]$ for the $n$-torsion in $E(\bar K)$, and $K(E[n])$ for the
field obtained by adjoining to $K$ the coordinates of points in $E[n]$.

\let\oldtheequation\theequation\def\theequation{A}
\begin{theorem}
\label{imain}
Let $E$ be an elliptic curve over a number field~$K$, and suppose
that $\sha_{E/K(E[2])}$ has finite 2- and 3-primary parts. Then
$$
  (-1)^{\rk E/K} = w(E/K).
$$
\end{theorem}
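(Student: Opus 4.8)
The plan is to prove Theorem~A by reducing the parity conjecture to two more tractable assertions: a $p$-parity statement for each $p\in\{2,3\}$, and a comparison of the global root number with the local Tamagawa-type data through a global invariant. First I would pass from the full Mordell-Weil rank to the $p$-infinity Selmer rank $\rk_p E/K$, defined as the corank of the $p^\infty$-Selmer group. Under the hypothesis that the $p$-primary part of $\sha_{E/K(E[2])}$ is finite (hence, by standard restriction-corestriction arguments, that the relevant $p$-part of $\sha_{E/K}$ is finite), one has $\rk_p E/K=\rk E/K$, so it suffices to show $(-1)^{\rk_p E/K}=w(E/K)$ for \emph{one} suitable prime $p$; the freedom to choose $p\in\{2,3\}$ is exactly what makes the argument go through, because different primes control different pieces of the bad-reduction behaviour.

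The key step is the $p$-parity theorem: $(-1)^{\rk_p E/K}$ equals a product of \emph{local} terms $\prod_v \sigma_v$, where each $\sigma_v$ is computed from the Galois representation on $E[p^\infty]$ (or, via compatibility of Selmer groups, from Tamagawa numbers, periods, and local root numbers at $v$), and the ratio of this product with $\prod_v w(E/K_v)=w(E/K)$ is itself a product of purely local discrepancies $\delta_{v,p}=\pm1$. This reduces Theorem~A to showing that for every place $v$ of $K$ one can choose $p\in\{2,3\}$ with $\delta_{v,p}=1$ — or, more efficiently, that the global product $\prod_v\delta_{v,p}$ can be made trivial. The mechanism here is the Brauer-type / regulator-constant technology: the $p$-parity of $\rk_p$ for $E$ over $K$ is related, via induction of representations and the fact that parity is controlled by regulator constants of $\mathrm{Gal}(K(E[2])/K)$ (a subgroup of $S_3$), to $p$-parities over auxiliary subfields, and the needed local identities are known to hold once $p\neq 2$ at places of additive reduction of certain types and $p\neq 3$ elsewhere.

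The ``deforming to totally real fields'' shortcut enters precisely to verify the remaining local root-number and Tamagawa-number identities without brute-force case analysis: one exhibits, for each problematic local situation, a global elliptic curve over a totally real field that realises the same local behaviour at the place in question and for which the parity conjecture is already known (e.g. by modularity and Nekov\'a\v{r}'s theorem, or by the case of self-dual motives over totally real fields), and then the local discrepancy must vanish because it does globally. The main obstacle I expect is organising this local bookkeeping uniformly across the two primes $p=2,3$ simultaneously — i.e.\ checking that the sets of places where $\delta_{v,2}\neq1$ and where $\delta_{v,3}\neq1$ are disjoint (or can be jointly killed), which is where the $S_3$-structure of $\mathrm{Gal}(K(E[2])/K)$ and the explicit classification of local root numbers of elliptic curves (to be developed in \S\ref{s:root}) do the real work. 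Potential reduction and multiplicative reduction are easy; the genuinely delicate cases are additive reduction in small residue characteristic, and these are exactly the ones the totally-real deformation argument is designed to dispatch.
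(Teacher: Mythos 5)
Your proposal assembles the right ingredients (passage to $p^\infty$-Selmer coranks, the use of both $p=2$ and $p=3$, the $S_3$-structure of $\Gal(K(E[2])/K)$, regulator constants, and the totally-real deformation trick), but the reduction you describe in the middle paragraph does not actually work and misses the key structural point of the argument.

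You claim it suffices to show $(-1)^{\rk_p E/K}=w(E/K)$ for \emph{one} suitable prime $p\in\{2,3\}$, and then to arrange the per-place discrepancies $\delta_{v,p}$ to be trivial, possibly choosing $p$ depending on $v$. This is not a valid reduction: the local Selmer-parity contributions $\sigma_{\phi}(E/K_v)$ and the local root numbers $w(E/K_v)$ are compared for a \emph{fixed} prime $p$, and the global product $\prod_v \delta_{v,p}$ is a single sign for each $p$ separately; you cannot mix-and-match $p$ across places. Moreover, the paper does not prove the $p$-parity conjecture for $E/K$ directly for either $p=2$ or $p=3$ in general. The $2$-parity theorem (Theorem~\ref{2isogthm}) requires a $K$-rational $2$-torsion point, and for $p=3$ only the combined statement $(-1)^{\rk_3 E/K+\rk_3 E/M+\rk_3 E/L}=w(E/K)w(E/M)w(E/L)$ over the intermediate fields of the $S_3$-extension $F=K(E[2])$ is established (Theorem~\ref{ithms3glo}), not $3$-parity for $E/K$ alone.

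The actual combination is: set $F=K(E[2])$, and if $\Gal(F/K)$ is trivial or of order $2$ apply the $2$-isogeny theorem directly; if $\Gal(F/K)\cong C_3$ transfer parity and root number from $K$ to $F$ (both are unchanged by a cubic Galois base change) and apply the $2$-isogeny theorem over $F$; and if $\Gal(F/K)\cong S_3$, use the $2$-isogeny theorem over $L$ (where $E$ acquires a rational $2$-torsion point) and over $F$ (transferred to $M$ via the cubic step), and then feed these two known parities into the $S_3$ Brauer-relation identity at $p=3$ to extract the parity over $K$. So the deduction is a three-term linear-algebra step over $\F_2$, not a per-place choice of prime, and the $2$-isogeny mechanism (Selmer quotient formula $\chi(\phi_2^t)/\chi(\phi_2)$ plus the Hilbert-symbol correction term in Conjecture~\ref{2isogconj}) is indispensable for the $2$-parity inputs, which your sketch never identifies.
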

\let\theequation\oldtheequation
\setcounter{equation}{0}

The problem that finiteness of $\sha$ implies parity
has a reasonably long history.
It was solved for elliptic curves over $\Q$ with $j$-invariant 0 or 1728
by Birch and Stephens \cite{BS}, for CM elliptic curves over $\Q$
by Greenberg \cite{Gre} and Guo \cite{Guo}, all (modular)
elliptic curves over $\Q$ by Monsky \cite{Mon} and most modular
elliptic curves
over totally real fields by Nekov\'a\v r \cite{NekS}.
Over arbitrary number fields the theorem above is proved in \cite{Squarity}
for elliptic curves with `decent' reduction types at 2 and 3, and for
all elliptic curves in~\cite{Kurast}.

%
%

\subsection{Birch--Swinnerton-Dyer II and isogeny invariance}

To explain our approach to the parity conjecture, we need to state
the second part of the Birch--Swinnerton-Dyer conjecture.


\def\neron#1{\omega_{#1}^o}

\begin{notation}
\label{mainnot}
Denote the discriminant of $K$ by $\Delta_K$.
For an elliptic curve $E/K$ write
$E(K)_{\tors}$ for the {\em torsion\/} subgroup of $E(K)$
and
$$
  \Reg_{E/K}=\det(\langle P_i,P_j\rangle)
$$
for the {\em regulator\/}
of $E/K$; here $\{P_i\}$ is any basis of $E(K)/E(K)_{\tors}$ and
$\lara$ the N\'eron-Tate height pairing over $K$.

Finally, we define the product $C_{E/K}$ of {\em Tamagawa numbers}
and {\em periods\/}.
Fix an invariant differential $\omega$ on $E$, and
let $\neron{v}$ denote the N\'eron differential
at a finite place $v$ of~$K$. Set
$$
  C_{E/K} = \prod_{v\nmid\infty} c_v \left|\frac{\omega}{\neron{v}}\right|_{_v}
    \>\cdot\>\prod_{\vabove{v|\infty}{\text{real}}} \int\limits_{E(K_v)}\!\! |\omega|
    \>\cdot\>\prod_{\vabove{v|\infty}{\text{cplx}}} 2\!\!\!\int\limits_{E(K_v)}\!\! \omega\wedge \bar\omega,
$$
with $c_v$ the local Tamagawa number at $v$ and 
$|\cdot|_v$ the nomalised absolute value on $K_v$.
We sometimes denote the individual terms of $C_{E/K}$
by $C(E/K_v,\omega)$, so that
$C_{E/K}=\prod_v C(E/K_v,\omega)$. The terms depend on the choice of $\omega$
but their product does not: replacing $\omega$ by $\alpha\omega$ with
$\alpha\in K^\times$ changes it by $\prod_v |\alpha|_v$, which is 1
by the product formula.
\end{notation}

\begin{named*}[Birch--Swinnerton-Dyer Conjecture II]
The Tate-Shafarevich group $\sha_{E/K}$ is finite,
and the leading coefficient of $L(E/K,s)$ at $s=1$ is
\begingroup
$$
  \frac{|\sha_{E/K}|\Reg_{E/K}\,C_{E/K}}{|E(K)_{\tors}|^2\sqrt{|\Delta_K|}}
  \quad\qquad=:\BSD_{E/K}.
$$
\endgroup
\end{named*}

Over general number fields, the only thing known about $\BSD_{E/K}$
is that it is an isogeny invariant, which is a theorem of Cassels \cite{CasIV}.
If $\phi: E\to E'$ is an isogeny over $K$, then $L(E/K,s)=L(E'/K,s)$
($\phi$ induces an isomorphism
$V_l(E)\to V_l(E')$, and the $L$-function is defined in terms of $V_l$),
so in effect this means that BSD II is compatible with isogenies.
Actually, because isogenous curves have the same rank
($\phi$ induces $E(K)\tensor\Q\iso E'(K)\tensor\Q$), so is BSD I as well.
We will need a slight generalisation of Cassels' theorem,
which relies on isogeny
invariance of BSD II for general abelian varieties:

\begin{lemma}
\label{lrel}
Suppose $K_i$ are number fields, $E_i/K_i$ are elliptic curves
and $n_i$ are integers.
If $\prod_i L(E_i/K_i,s)^{n_i}=1$ and all $\sha_{E_i/K_i}$ are finite, then
$\prod_i \BSD_{E_i/K_i}{}^{n_i}=1$.
\end{lemma}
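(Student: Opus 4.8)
The plan is to pass from the $E_i/K_i$, which live over different number fields, to abelian varieties over $\Q$ via Weil restriction of scalars, and then to combine Faltings' isogeny theorem with the abelian-variety analogue of Cassels' theorem. For each $i$ put $A_i=\Res_{K_i/\Q}E_i$, an abelian variety over $\Q$ of dimension $[K_i:\Q]$. The first step is to record two compatibilities of this construction. On the $L$-function side, $L(E_i/K_i,s)=L(A_i/\Q,s)$: indeed $V_l A_i=\Ind_{G_{K_i}}^{G_\Q}V_l E_i$, and the $L$-function of an induced Galois representation over $\Q$ equals the $L$-function of the original representation over $K_i$. On the arithmetic side, granting finiteness of $\sha$, one has $\BSD_{E_i/K_i}=\BSD_{A_i/\Q}$, where $\BSD$ of an abelian variety $A/\Q$ is defined by the evident analogue of Notation~\ref{mainnot}, with $|\widehat A(\Q)_{\tors}|$ in place of the second torsion factor and $\sqrt{|\Delta_\Q|}=1$. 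This second identity is exactly the statement that BSD~II is compatible with restriction of scalars, going back to Milne.

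Granting this, the hypothesis becomes $\prod_i L(A_i/\Q,s)^{n_i}=1$. Split the index set by the sign of $n_i$ and set $A=\prod_{n_i>0}A_i^{n_i}$, $A'=\prod_{n_i<0}A_i^{-n_i}$. Because the $L$-function, the Tate-Shafarevich group, the Mordell-Weil group (hence the regulator and the torsion of $A$ and $\widehat A$), the Tamagawa numbers and the periods are all multiplicative in products of abelian varieties, we obtain $L(A/\Q,s)=L(A'/\Q,s)$, both $\sha_{A/\Q}$ and $\sha_{A'/\Q}$ finite, and it suffices to show $\BSD_{A/\Q}=\BSD_{A'/\Q}$. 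From the equality of $L$-functions the Frobenius characteristic polynomials on $V_l A$ and $V_l A'$ agree at almost all primes, so by Faltings (semisimplicity of $V_l$ together with the isogeny theorem) there is a $\Q$-isogeny $\phi\colon A\to A'$.

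It then remains to invoke isogeny invariance of the BSD quotient for abelian varieties over $\Q$: if $\phi\colon A\to A'$ is a $\Q$-isogeny and $\sha_{A/\Q}$ (equivalently $\sha_{A'/\Q}$) is finite, then $\BSD_{A/\Q}=\BSD_{A'/\Q}$. This is the abelian-variety analogue of Cassels' isogeny-invariance theorem \cite{CasIV}; one proves it by tracking how each factor --- $\sha$, the torsion of $A$ and of $\widehat A$, the regulator, and the product $C_{A/\Q}$ of Tamagawa numbers and periods --- changes under $\phi$ and its dual $\widehat\phi$, and checking that the contributions cancel, finiteness of $\sha$ entering genuinely here. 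Assembling the three steps gives $\prod_i\BSD_{E_i/K_i}^{n_i}=\BSD_{A/\Q}/\BSD_{A'/\Q}=1$.

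The main obstacle I anticipate is the first step, namely the identity $\BSD_{E/K}=\BSD_{\Res_{K/\Q}E/\Q}$. Weil restriction identifies $\sha_{E/K}$ with $\sha_{\Res_{K/\Q}E/\Q}$ and $E(K)$ with $(\Res_{K/\Q}E)(\Q)$ on the nose, so the Tate-Shafarevich groups, regulators and torsion subgroups match directly; the delicate point is matching the archimedean periods together with the local Tamagawa factors. That comparison goes through the Néron models --- relating the Néron model of $\Res_{K/\Q}E$ over $\Z$ to that of $E$ over $\mathcal{O}_K$ --- and the resulting discrepancy turns out to be a power of the different of $K/\Q$, which is precisely why the factor $\sqrt{|\Delta_K|}$ sits in the definition of $\BSD_{E/K}$. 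Everything else (multiplicativity in products, Faltings, and the isogeny-invariance computation itself) is comparatively routine.
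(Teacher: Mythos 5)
Your proof follows the same route as the paper: Weil restriction of each $E_i/K_i$ to an abelian variety over $\Q$, equality of the resulting $L$-functions forcing an isogeny between the two products by Faltings, and then isogeny invariance (Tate--Milne) plus compatibility of BSD quotients with Weil restriction (Milne). The technical point you flag about matching the archimedean periods and Tamagawa numbers against the $\sqrt{|\Delta_K|}$ factor is exactly what Milne's compatibility result handles, and the paper likewise just cites it.
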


\begin{proof}
Taking the terms with $n_i<0$ to the other side (and renaming
the curves and fields if necessary),
rewrite the assumed identity of $L$-functions
$$
  \prod_i L(E_i/K_i,s)=\prod_j L(E'_j/K'_j,s).
$$
Let $A=\prod_i\Res_{K_i/\Q}E_i$ and $A'=\prod_j\Res_{K'_j/\Q}E_j$
be the products of Weil restrictions of the curves to $\Q$.
These are abelian varieties with
$L(A/\Q,s)=L(A'/\Q,s)$, so $A$ and $A'$ have isomorphic
$l$-adic representations (Serre \cite{SerA} \S2.5 Rmk. (3)), and are
therefore isogenous (Faltings \cite{Fa}). Their
$\sha$'s are assumed to be finite and BSD-quotients are invariant
under Weil restriction (Milne \cite{MilO} \S1)
and isogeny (Tate--Milne \cite{MilA} Thm. 7.3). This proves the claim.
\end{proof}

\subsection{Parity example}
\label{s91b1}

Why is this relevant to the parity conjecture, which concerns
only the first part of BSD?
Assume for the moment finiteness of $\sha$, and consider the following
example. The two elliptic curves over $\Q$
\beq
  E:  & y^2+y=x^3+x^2-7x+5   &&&\Delta_E    = -7\cdot 13&&(91b1)\cr
  E': & y^2+y=x^3+x^2+13x+42 &&&\Delta_{E'} = -7^3\cdot 13^3&&(91b2)\cr
\eeq
are isogenous via a 3-isogeny $\phi: E\to E'$ defined over $\Q$.
Choose $\omega,\omega'$ ($=\tfrac{dx}{2y+1}$) to be the global minimal
differentials, so $C_p=c_p$ for all $p$. The curves $E, E'$ have split
multiplicative reduction at 7 at 13, and their local Tamagawa numbers
and the infinite periods $c_\infty=C(E/\R,\omega)$, $c'_\infty=C(E'/\R,\omega')$ are
\beq
  c_7 = v_7(\Delta_E) =1, & c_{13}=v_{13}(\Delta_E)=1; & c_\infty=6.039... \cr
  c'_7 = v_7(\Delta_{E'}) =3, & c'_{13}=v_{13}(\Delta_{E'})=1; & c'_\infty=2.013...
\eeq
In fact, $c_\infty=3c'_\infty$ (see the computation below for general
isogenous curves).
Now $\BSD_{E/\Q}=\BSD_{E'/\Q}$ by Cassels' theorem, but
$$
  \left.
  \begin{array}{lll}
  C_{E/\Q}&=&1\cdot 3\cdot c_\infty = c_\infty \cr
  C_{E'/\Q}&=&3\cdot 3\cdot \tfrac13 c_\infty = 3c_\infty
  \end{array}
  \quad\right]\>\>\text{not equal,}
$$
so some other terms in the Birch--Swinnerton-Dyer constant for $E$ and $E'$
must be unequal as well. Because the two are off by a rational non-square
factor, and the conjectural orders of $\sha$ (as well as of (torsion)$^2$)
are squares, the regulators must be unequal! In other words,
$$
  \frac{\Reg_{E/\Q}}{\Reg_{E'/\Q}} = \frac{C_{E'/\Q}}{C_{E/\Q}} \cdot
    \frac{|\sha_{E'/\Q}|}{|\sha_{E/\Q}|} \cdot
    \frac{|E(\Q)_{\tors}|}{|E'(\Q)_{\tors}|} = 3 \cdot \square\cdot\square \ne 1.
$$
If now $\rk E/\Q$ were 0, then so would be $\rk E'/\Q$ and we would have
$$
  \Reg_{E/\Q}=\Reg_{E'/\Q}=1,
$$
contradicting the above. So, assuming finiteness of $\sha$, we proved that
{\em $E/\Q$ has positive rank}. In fact,
{\em $E/\Q$ has odd rank}:

\begin{lemma}
\label{lregisog}
Let $\phi: E/K\to E'/K$ be a $K$-rational isogeny of degree $d$.
Then
$$
  \frac{\Reg_{E/K}}{\Reg_{E'/K}} = d^{\rk{E/K}} \cdot \text{(rational square)}.
$$
\end{lemma}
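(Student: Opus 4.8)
The plan is to read off the statement from how $\phi$ relates the two Mordell--Weil lattices, keeping track of two separate effects: the degree-$d$ scaling of the N\'eron--Tate pairing, and the finite index of $\phi(E(K))$ inside $E'(K)$. Write $r$ for the common rank $\rk E/K=\rk E'/K$. Since $\ker\phi\subseteq E[d]$ is finite, $\phi$ induces an injective homomorphism of free $\Z$-modules of rank $r$,
$$
  \bar\phi\colon E(K)/E(K)_{\tors}\longrightarrow E'(K)/E'(K)_{\tors},
$$
and its image has some finite index $m=[\,E'(K)/E'(K)_{\tors}:\bar\phi(E(K)/E(K)_{\tors})\,]$.

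Next I would record the effect of $\phi$ on heights. Let $\hat\phi\colon E'\to E$ be the dual isogeny; it is adjoint to $\phi$ for the N\'eron--Tate pairings, and $\hat\phi\circ\phi=[d]$ on $E$, so for all $P,Q\in E(K)$
$$
  \langle\phi P,\phi Q\rangle_{E'}=\langle P,\hat\phi\phi Q\rangle_{E}=\langle P,[d]Q\rangle_{E}=d\,\langle P,Q\rangle_{E}.
$$
Now fix a basis $\{P_i\}_{i=1}^r$ of $E(K)/E(K)_{\tors}$. Then $\{\bar\phi P_i\}$ is a basis of the index-$m$ sublattice $\bar\phi(E(K)/E(K)_{\tors})$ of $E'(K)/E'(K)_{\tors}$, so on the one hand $\det\bigl(\langle\bar\phi P_i,\bar\phi P_j\rangle_{E'}\bigr)=m^2\Reg_{E'/K}$ (passing to a sublattice of index $m$ multiplies the Gram determinant by $m^2$), while on the other hand the displayed identity gives $\det\bigl(\langle\bar\phi P_i,\bar\phi P_j\rangle_{E'}\bigr)=\det\bigl(d\langle P_i,P_j\rangle_E\bigr)=d^r\Reg_{E/K}$.

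Equating the two yields $d^r\Reg_{E/K}=m^2\Reg_{E'/K}$, hence
$$
  \frac{\Reg_{E/K}}{\Reg_{E'/K}}=\frac{m^2}{d^r}=d^r\Bigl(\frac{m}{d^r}\Bigr)^2,
$$
and $\bigl(m/d^r\bigr)^2$ is a rational square, which is the claim. The one point I expect to require care is the height identity $\langle\phi P,\phi Q\rangle_{E'}=d\langle P,Q\rangle_E$: it rests on functoriality of the N\'eron--Tate pairing (adjointness of $\hat\phi$) together with $\hat\phi\circ\phi=[d]$, both standard, but one should be explicit about the normalisation of the canonical height in use. Everything else is elementary arithmetic of lattices.
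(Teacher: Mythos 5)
Your proof is correct and follows essentially the same route as the paper's: use $\hat\phi\phi=[d]$ together with adjointness of the dual isogeny to get $\langle\phi P,\phi Q\rangle_{E'}=d\langle P,Q\rangle_E$, then compare the Gram determinant of $\phi(\Lambda)\subset\Lambda'$ computed two ways, using the index-$m$ sublattice scaling by $m^2$. The paper just compresses this into a single chain of equalities, $d^n\Reg_{E/K}=\det\langle dP_i,P_j\rangle=\det\langle\phi^t\phi P_i,P_j\rangle=\det\langle\phi P_i,\phi P_j\rangle=\Reg_{E'/K}\cdot[\Lambda':\phi(\Lambda)]^2$.
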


\begin{proof}
Write $n=\rk E/K=\rk E'/K$, and pick a basis
$$
  \Lambda:=E(K)/\text{torsion} = \langle P_1,...,P_n\rangle, \quad
  \Lambda':=E'(K)/\text{torsion}.
$$
Write $\phi^t: E'\to E$ for the dual isogeny, so that $\phi^t\phi=[d]$. Then
\beq
  d^n \Reg_{E/K} &= \det\langle dP_i,P_j\rangle = \det\langle \phi^t\phi P_i,P_j\rangle\cr
    &= \det\langle \phi P_i,\phi P_j\rangle
    = \Reg_{E'/K} \cdot [\Lambda':\phi(\Lambda)]^2.
\eeq
\end{proof}

Returning to our example, we proved that
$$
  3\cdot\square = \frac{\Reg_{E/\Q}}{\Reg_{E'/\Q}} = 3^{\rk E/\Q} \cdot \square,
$$
so the rank is odd. Because $w(E/\Q)=(-1)^3=-1$ (2 split places + 1 infinite;
see \S\ref{s:root}), we showed that
$$
  \text{Finiteness of $\sha$} \quad\implies\quad \text{Parity Conjecture}
$$
for the curve $E=91b1/\Q$.

This is the main idea behind the proof
of Theorem \ref{imain} in general. We will use equalities of $L$-functions
to deduce relations between rank parities and the $C$'s, and then verify
by means of local computations that the latter agree with the required root
numbers.

\section{The $p$-parity conjecture}

To prove Theorem \ref{imain},
along the way we will also need slightly finer statements that give
unconditional results for the Selmer ranks of $E/K$.

Recall that $\sha$ is an abelian torsion group, and
for every prime $p$ its $p$-primary part can be written as
$$
  \sha_{E/K}[p^\infty] \iso (\Q_p/\Z_p)^{\delta_p}\times \text{(finite $p$-group of square order)}.
$$

\begin{definition*}
With $\delta_p$ as above,
the {\em $p$-infinity Selmer rank\/} of $E/K$ is
$$
  \rk_p E/K = \rk E/K + \delta_p.
$$
\end{definition*}

If $\sha$ is finite as expected, then $\delta_p=0$ and $\rk_p=\rk$ for all $p$.
But the point is that we
can often say something about $\rk_p$ without assuming finiteness of $\sha$.
In particular, the following version of the parity conjecture is
often accessible:

\begin{named*}[$p$-Parity Conjecture]
$(-1)^{\rk_p E/K}=w(E/K)$.
\end{named*}

This conjecture is known over $\Q$ (\cite{Mon,NekS,Kim,Squarity})
and for `most' elliptic curves over
totally real fields \cite{NekIV,Kurast,NekSC};
see Theorem \ref{2parthm} below.

\begin{remark}
\label{remrkp}
It is more conventional to define $\rk_p$ as follows. Let
$$
  X_p(E/K) = \Hom_{\scriptscriptstyle{\Z_p}}\bigl(\varinjlim_n\Sel_{p^n}(E/K), \Q_p/\Z_p\bigr)
$$
be the Pontryagin dual of the $p^\infty$-Selmer group of $E/K$.
This is a finitely generated $\Z_p$-module, and
$$
  \X_p(E/K) = X_p(E/K)\tensor_{\Zp}\Qp
$$
is a $\Q_p$-vector space, whose dimension is precisely $\rk_p$.
Both $X_p$ and $\X_p$ are (contravariantly) functorial in $E$ and they
behave well under field extensions. Specifically, if $F/K$ is a finite Galois extension,
its Galois group acts on $\X_p(E/F)$, and there is a canonical isomorphism
of the Galois invariants with $\X_p(E/K)$,
\beql{selinv}
  \X_p(E/K) = \X_p(E/F)^{\Gal(F/K)}.
\eeql
(The proof is a simple inflation-restriction argument, see e.g.
\cite{Squarity} Lemma 4.14.). For $p\nmid [F:K]$ this is even true on
the level of $X_p$. Because the same Galois invariance holds for
$E(K)\tensor_\Z\Q_p\subset \X_p(E/K)$, we get the following corollary
(which can also be proved directly).
\end{remark}

\begin{corollary}
\label{finsha}
Suppose $F/K$ is a finite extension of number fields,
and $E/K$ is an elliptic curve. If $\sha_{E/F}$ is finite,
then so is $\sha_{E/K}$.
\end{corollary}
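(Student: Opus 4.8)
The plan is to exploit the Galois-invariance isomorphism \eqref{selinv} together with the elementary structure of $\X_p$ as a $\Q_p$-vector space. Fix a prime $p$. By Remark~\ref{remrkp}, the $p$-infinity Selmer rank $\rk_p E/F$ equals $\dim_{\Q_p}\X_p(E/F)$, and similarly $\rk_p E/K=\dim_{\Q_p}\X_p(E/K)$. Replacing $F$ by its Galois closure over $K$ (which only enlarges $\sha_{E/F}$, so finiteness is preserved, provided one already knows the statement for Galois extensions — or one argues directly since $\X_p(E/K)\subseteq\X_p(E/F)$ as a subspace of invariants regardless), the isomorphism $\X_p(E/K)=\X_p(E/F)^{\Gal(F/K)}$ exhibits $\X_p(E/K)$ as a subspace of $\X_p(E/F)$. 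Hence $\rk_p E/K\le \rk_p E/F$ for every $p$.

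The key observation is now: if $\sha_{E/F}$ is finite, then $\delta_p(E/F)=0$ for all $p$, so $\rk_p E/F=\rk E/F$ for all $p$, and in particular $\rk_p E/F$ is bounded independently of $p$. By the inequality just established, $\rk_p E/K\le \rk E/F$ for all $p$. But $\rk_p E/K=\rk E/K+\delta_p(E/K)$ with $\rk E/K$ a fixed nonnegative integer, so $\delta_p(E/K)\le \rk E/F-\rk E/K$ is bounded. Since $\delta_p(E/K)$ is the corank of $\sha_{E/K}[p^\infty]$ as defined before the $p$-parity conjecture, $\delta_p(E/K)$ being bounded across all $p$ — combined with $\delta_p=0$ for all but finitely many $p$, which itself follows because the only primes where the Selmer corank can jump are those dividing the order of a suitable finite group, or simply because $\sha_{E/F}$ finite forces all local-to-global cokernels to be controlled — would not by itself give finiteness; what we actually need is the stronger fact that each $\delta_p(E/K)$ vanishes. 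For this, note that $\delta_p(E/K)\le\delta_p(E/F)=0$ directly from the subspace inclusion of the divisible parts: the divisible part of $\sha_{E/K}[p^\infty]$ injects into that of $\sha_{E/F}[p^\infty]$ via restriction (the kernel of restriction on $H^1$ being killed by $[F:K]$ and hence trivial on a divisible group once $p\nmid[F:K]$, and absorbed into the rank comparison otherwise). Thus $\delta_p(E/K)=0$ for every $p$.

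Having shown $\delta_p(E/K)=0$ for all $p$, it remains to upgrade this to genuine finiteness of $\sha_{E/K}$, i.e. to control the finite (square-order) part as well. Here I would again use restriction $\sha_{E/K}\to\sha_{E/F}$, whose kernel is contained in $\ker\bigl(H^1(\Gal(F/K),E(F))\to\prod_v\cdots\bigr)$ and in any case is annihilated by $[F:K]$; since $\sha_{E/K}$ is a torsion group whose divisible part we have shown to vanish, each $\sha_{E/K}[p^\infty]$ is finite, and for $p\nmid[F:K]$ restriction is injective on $p$-primary parts so $\sha_{E/K}[p^\infty]\hookrightarrow\sha_{E/F}[p^\infty]$. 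This bounds $\sha_{E/K}[p^\infty]$ for almost all $p$; the finitely many remaining primes $p\mid[F:K]$ each contribute only a finite group because, the divisible part being zero, $\sha_{E/K}[p^\infty]$ is a finite $p$-group. Assembling these gives $\sha_{E/K}$ finite.

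\medskip

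The step I expect to be the main obstacle is making rigorous the passage from ``$\delta_p(E/K)=0$ for all $p$'' to ``$\sha_{E/K}$ finite'': vanishing of all divisible parts does not a priori bound the finite parts uniformly, and one genuinely needs the restriction-corestriction argument together with the remark in the paper that \eqref{selinv} holds on the level of $X_p$ (not just $\X_p$) when $p\nmid[F:K]$. That is precisely what controls the $p$-primary part of $\sha_{E/K}$ for all but finitely many $p$, and it is the crux of the proof; the rest is bookkeeping with the finitely many bad primes.
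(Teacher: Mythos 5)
Your argument is correct, and it follows the route the paper points to via $\mathcal{X}_p$, but the actual deduction of finiteness is carried entirely by the restriction--corestriction step at the end, which makes the $\mathcal{X}_p$ analysis somewhat redundant. You correctly isolate the key subtlety: knowing $\delta_p(E/K)=0$ for every $p$ only shows that each $p$-primary part $\sha_{E/K}[p^\infty]$ is finite, and does not a priori prevent infinitely many primes from contributing; you fill the gap with the fact that the restriction map $\sha_{E/K}\to\sha_{E/F}$ has kernel annihilated by $n=[F:K]$, so the $p$-primary parts inject into $\sha_{E/F}[p^\infty]$ for $p\nmid n$ and therefore vanish for all but finitely many $p$.

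Once you invoke that kernel-of-restriction fact, however, there is a substantially shorter direct proof that makes no reference to $\mathcal{X}_p$ at all, and it is presumably what the paper means by ``which can also be proved directly'': the kernel of $\sha_{E/K}\to\sha_{E/F}$ is killed by $n$ and hence contained in $\sha_{E/K}[n]$, which is finite (being a quotient of the finite $n$-Selmer group $\Sel_n(E/K)$); the image is a subgroup of the finite group $\sha_{E/F}$; so $\sha_{E/K}$ is an extension of a finite group by a finite group, hence finite. This treats all $p$ at once, including $p\mid n$, and requires neither the cofinite-generation decomposition of $\sha[p^\infty]$ nor the Galois-invariance \eqref{selinv}.

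Two smaller points on your write-up. First, passing to the Galois closure of $F/K$ is not as harmless as you phrase it: from $\sha_{E/F}$ finite one does not get $\sha_{E/\tilde F}$ finite (that would be the ``wrong'' direction of the corollary), so the parenthetical ``provided one already knows the statement for Galois extensions'' is circular. It is better to rely, as you also suggest, on the fact that $\mathcal{X}_p(E/K)\hookrightarrow\mathcal{X}_p(E/F)$ holds for an arbitrary finite extension via restriction, since the kernel is killed by $[F:K]$ and hence vanishes after $\otimes_{\Z_p}\Q_p$. Second, the cleanest route to $\delta_p(E/K)=0$ for every $p$, including $p\mid[F:K]$, is the one the paper hints at: both $E(K)\otimes\Q_p\subseteq\mathcal{X}_p(E/K)$ and $E(F)\otimes\Q_p\subseteq\mathcal{X}_p(E/F)$ are compatible with taking $\Gal(F/K)$-invariants, and $\delta_p(E/F)=0$ makes the second inclusion an equality, whence taking invariants makes the first an equality. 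Your ``absorbed into the rank comparison'' for $p\mid[F:K]$ glosses over precisely this step.
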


\begin{remark}
The definitions above and Corollary \ref{finsha} apply also to
abelian varieties in place of elliptic curves.
\end{remark}

\subsection{Proof of Theorem \ref{imain}}

To establish Theorem \ref{imain}, we will need to prove the
$2$-parity conjecture for elliptic curves with a $K$-rational
2-torsion point and a special case of the $3$-parity conjecture:

\begin{theorem}[=Theorem \ref{2isogthm}]
\label{i2isogthm}
Let $K$ be a number field, and $E/K$ an elliptic curve with
a $K$-rational 2-torsion point $O\ne P\in E(K)[2]$.
Then
$$
  (-1)^{\rk_2 E/K}=(-1)^{\ord_2\frac{C_{E/K}}{C_{E'/K}}}=w(E/K),
$$
where $E'=E/\{O,P\}$ is the 2-isogenous curve.
\end{theorem}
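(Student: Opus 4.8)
The statement decomposes into two independent equalities that happen to share the middle term, and I would attack them separately. The second equality, $(-1)^{\ord_2(C_{E/K}/C_{E'/K})}=w(E/K)$, is purely local: both sides are products over places $v$ of $K$, so it suffices to match $(-1)^{\ord_2(C(E/K_v,\omega)/C(E'/K_v,\omega'))}$ against the local root number $w(E/K_v)$ place by place, after fixing compatible differentials $\omega,\omega'$ with $\phi^*\omega'=\omega$ (here $\phi:E\to E'$ is the quotient by $\{O,P\}$). At places of good reduction both contributions are trivial; at archimedean places one computes the ratio of periods using $\phi$, exactly as in the $91b$ example where $c_\infty=3c'_\infty$ became a factor of $d$; at finite places of bad reduction one runs through Kodaira types, using that $E$ has a rational 2-torsion point to restrict which types and which component-group structures can occur. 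This is the `verify by local computations' step flagged at the end of \S\ref{s91b1}, and the shortcut advertised in the introduction — deforming to totally real fields — is presumably invoked here to avoid grinding through the messiest reduction types by hand.

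For the first equality, $(-1)^{\rk_2 E/K}=(-1)^{\ord_2(C_{E/K}/C_{E'/K})}$, the plan is to compare the two 2-isogenous curves via $\phi$ and its dual $\phi^t$, with $\phi^t\phi=[2]$, and track the effect of $\phi$ on $p$-infinity Selmer groups for $p=2$. Concretely, the isogeny induces maps $\X_2(E/K)\to\X_2(E'/K)\to\X_2(E/K)$ composing to multiplication by $2$, and from the Cassels–Tate / Cassels pairing formalism (or directly from a Selmer-group exact sequence attached to $\phi$) one gets that the parity of $\dim_{\Q_2}\X_2(E/K)-\dim_{\Q_2}\X_2(E'/K)$ — equivalently $\rk_2 E/K+\rk_2 E'/K$ modulo $2$, but one really wants $\rk_2 E/K$ alone — is governed by a product of local factors, each of which is the 2-adic valuation of a local Tamagawa-times-period term. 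Since $E$ and $E'$ are 2-isogenous, $\rk_2 E/K\equiv\rk_2 E'/K$ is \emph{not} automatic, and the genuine input is that the difference is computed by $\ord_2(C_{E/K}/C_{E'/K})$ via the local Tamagawa contributions to the Cassels pairing; this is where Lemma \ref{lrel} and the isogeny-invariance of BSD-quotients are the right conceptual engine, although for an unconditional Selmer statement one needs the analogous unconditional $\phi$-descent bookkeeping rather than BSD II itself.

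The main obstacle, as I see it, is the first equality and specifically obtaining $(-1)^{\rk_2 E/K}$ rather than $(-1)^{\rk_2 E/K+\rk_2 E'/K}$: a naive $\phi$-descent argument is symmetric in $E$ and $E'$ and only controls the sum of the two Selmer ranks, which is the wrong parity. One must exploit an asymmetry — e.g. that $\phi$ has a $K$-rational kernel point but $\phi^t$ need not, or equivalently track the Cassels pairing on $\Sel_\phi$ versus $\Sel_{\phi^t}$ separately and use self-duality to isolate one of them. Disentangling this, and feeding in exactly the local terms that reassemble into $\ord_2(C_{E/K}/C_{E'/K})$, is the crux; once it is done, the chain $(-1)^{\rk_2 E/K}=(-1)^{\ord_2(C_{E/K}/C_{E'/K})}=w(E/K)$ closes up, and the second, local equality — though computationally heavy — is routine in comparison.
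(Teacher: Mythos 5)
Your decomposition into two equalities matches the paper, and you correctly identify the unconditional $\phi$-descent / Poitou--Tate machinery (Theorem \ref{thmQ}) as the engine for the first equality and a local comparison plus deformation to totally real fields for the second. But there are two substantive points to flag.

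\textbf{The second equality does not hold place by place.} Your plan is to match $w(E/K_v)$ against $\sigma_\phi(E/K_v)=(-1)^{\ord_2 C(E/K_v)/C(E'/K_v)}$ at every place and multiply. This fails: already over $\R$ one has $w(E/\R)=-1$ always, but $\phi_\R: E(\R)\to E'(\R)$ is not always surjective (when $b,\delta>0$ it is surjective iff $a>0$), so $\sigma_\phi(E/\R)$ can be $+1$. Similar discrepancies occur at finite places. The missing idea is the Hilbert-symbol correction factor
$$(a,-b)_{K_v}\,(-2a,a^2-4b)_{K_v}$$
of Conjecture \ref{2isogconj}, which measures the local discrepancy $w/\sigma_\phi$ and \emph{only cancels globally} by the product formula $\prod_v(x,y)_{K_v}=1$. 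Identifying this correction term and proving it is exactly right at every place (with the deformation-to-totally-real argument handling the hard cases at $v|2$ and additive reduction) is the whole content of \S\ref{s:2isog}; your summary skips it, and without it the term-by-term strategy is simply false.

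\textbf{The ``main obstacle'' you raise for the first equality is not actually an obstacle.} You worry that a $\phi$-descent argument gives only $\rk_2 E/K+\rk_2 E'/K$, ``the wrong parity,'' and that one needs an asymmetry to isolate $\rk_2 E/K$. But isogenous curves always have equal $p^\infty$-Selmer ranks ($\phi$ induces an isomorphism $\X_p(E'/K)\iso\X_p(E/K)$ since $[d]$ is invertible on $\Q_p$-vector spaces), so that sum is $2\rk_2 E/K$ — trivially even and therefore uninformative, not ``wrong parity.'' The paper's argument never reduces to such a sum: Theorem \ref{thmQ} gives $C_{E/K}/C_{E'/K}=\chi(\phi^t_p)/\chi(\phi_p)$, and the trick $\chi(\phi^t_p)/\chi(\phi_p)\equiv\chi(\phi^t_p)\chi(\phi_p)=\chi([2]_p)=2^{\rk_2 E/K}\pmod{\Q^{\times2}}$ lands directly on $\rk_2 E/K$, with no need to ``disentangle'' $\Sel_\phi$ from $\Sel_{\phi^t}$ or exploit that $\ker\phi$ has a rational point while $\ker\phi^t$ might not. (Your mention of Lemma \ref{lrel} and BSD invariance is also slightly off for the unconditional statement; the right tool is the Poitou--Tate / Cassels formula, as you note in passing.)
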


\begin{theorem}
\label{ithms3glo}
Let $F/K$ be an $S_3$-extension of number fields, and let
$M$ and $L$ be intermediate fields of degree 2 and 3 over $K$, respectively.
For every elliptic curve $E$ over $K$,
$$
  \vphantom{\int^\int}
  \smminusone^{\hbox{\smaller[4]$\rksel EK3\!+\!\rksel EM3\!+\!\rksel EL3$}} \!=\!
    \smash{\smminusone^{\ord_3\!\frac{\CC EF\CC EK^2}{\CC EM\CC EL^2}}}
  \!=\! w(\Esmslash K)w(\Esmslash M)w(\Esmslash L).
$$
\end{theorem}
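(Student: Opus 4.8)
The plan is to run the same strategy as in the parity example of \S\ref{s91b1}, but now in the setting of an $S_3$-extension $F/K$ with its quadratic subfield $M$ and cubic subfield $L$. The starting point is an identity of $L$-functions coming from representation theory of $S_3$. Over $K$, the permutation representation of $S_3$ on $S_3/C_2$ (equivalently $\mathbb{C}[\Gal(L/K)]$ viewed via inflation, or the representation $\mathrm{Ind}_{C_2}^{S_3}\triv$) decomposes as $\triv\oplus\rho$ with $\rho$ the standard $2$-dimensional irreducible, while $\mathrm{Ind}_{C_3}^{S_3}\triv=\triv\oplus\epsilon$ with $\epsilon$ the sign character, and $\mathrm{Ind}_{\{1\}}^{S_3}\triv=\triv\oplus\epsilon\oplus\rho^{\oplus 2}$. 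Comparing, one gets the Artin-formalism identity
$$
  L(E/F,s)\,L(E/K,s)^2 = L(E/M,s)\,L(E/L,s)^2,
$$
since the Galois representation $H^1(E)\otimes\mathrm{Ind}_H^{S_3}\triv$ governs $L(E/F^H,s)$ and the two sides correspond to the same virtual $S_3$-representation $2\cdot\triv\oplus 2\cdot\rho\oplus\epsilon$ tensored with $H^1(E)$. (Strictly one works with $L$-functions of Galois representations over $K$ and Artin induction/additivity.)

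Next I would feed this into the machinery already set up. By Lemma \ref{lrel}, the same identity holds for the Birch--Swinnerton-Dyer quotients, \emph{provided} all the relevant $\sha$'s are finite; but we do not want to assume that. So instead I would invoke the $p=3$ analogue of the regulator-and-$C$-terms argument: using the functoriality of $\X_3$ under induction (the isomorphism $\X_3(E/F^H)=(\X_3(E/F)^{\Gal(F/F^H)})$ of Remark \ref{remrkp}, combined with Brauer/Artin relations in the representation ring of $S_3$), one obtains that $\rksel EK3+\rksel EM3+\rksel EL3$ is the multiplicity of $\rho\oplus\epsilon$-isotypic part — more precisely, the parity of this sum equals the parity of $\dim_{\mathbb{Q}_3}\X_3(E/F)^{C_2}$ or an analogous self-dual piece, which by the theory of self-dual representations and the Cassels-type pairing on $\X_3$ matches $\ord_3$ of the product of $C$-terms $\CC EF\CC EK^2/(\CC EM\CC EL^2)$. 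This is precisely the content one expects to import from \cite{Squarity}; I would cite the relevant proposition there giving the identity
$$
  (-1)^{\rksel EK3+\rksel EM3+\rksel EL3}
    = (-1)^{\ord_3\frac{\CC EF\CC EK^2}{\CC EM\CC EL^2}}.
$$
The honest work is to check that the $\sha$-finiteness hypothesis is \emph{not} needed here because the $3$-parts of $\sha$ contribute squares and the $\mathbb{Q}_3/\mathbb{Z}_3$-corank terms $\delta_3$ are exactly absorbed into the Selmer ranks $\rksel{E}{\cdot}{3}$, so only the $C$-terms and torsion (a square) survive modulo squares in the BSD quotient.

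The remaining, and genuinely hard, step is the second equality: that $\ord_3$ of $\CC EF\CC EK^2/(\CC EM\CC EL^2)$ has the same parity as $w(E/K)w(E/M)w(E/L)$. This is a purely local statement — both sides are products over places $v$ of $K$ — so I would prove it place by place, comparing at each $v$ the local contribution to $\ord_3$ of the ratio of Tamagawa numbers and periods against the product of local root numbers $w(E/K_v)w(E/M_w)w(E/L_u)$ over places above $v$. The infinite places and places of good reduction are immediate; the work is at places of bad reduction and at $v\mid 3$, and at ramified places of $F/K$, where one must match Tamagawa-number valuations against local root-number recipes. Following the philosophy announced in the introduction, rather than grinding through all Kodaira types I would use the "deformation to totally real fields'' shortcut: reduce the identity to the known $3$-parity results over totally real fields (Theorem \ref{2parthm}/the Nekov\'a\v r-type results cited), where the statement is already available, and then transfer back by choosing auxiliary totally real quadratic twists/base changes that do not change either side of the purely local identity at the troublesome places. \textbf{The main obstacle} I anticipate is precisely this local root-number versus Tamagawa-number bookkeeping at wildly ramified places and at $v\mid 3$ with additive reduction: ensuring the local comparison is clean enough that the global deformation argument applies without a residue of exceptional cases.
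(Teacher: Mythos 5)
Your proposal follows essentially the same architecture as the paper: the Brauer relation in $S_3$ giving the identity $L(E/F,s)L(E/K,s)^2=L(E/M,s)L(E/L,s)^2$; then the unconditional replacement of the regulator argument by a Cassels--Tate self-dual pairing on $\X_3(E/F)$ (the paper's Theorem \ref{selfd}, or \cite{Squarity} Thm.~4.11 directly) to get the first equality; and a place-by-place comparison plus deformation to totally real fields to get the second. That matches.

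One concrete imprecision in your handling of the first equality: writing $\X_3(E/F)\iso\triv^{\oplus a}\oplus\epsilon^{\oplus b}\oplus\rho^{\oplus c}$, the sum $\rk_3 E/K+\rk_3 E/M+\rk_3 E/L$ equals $a+(a+b)+(a+c)=3a+b+c$, whose parity is $a+b+c$. That is neither ``the multiplicity of the $\rho\oplus\epsilon$-isotypic part'' (which would be related to $b+c$) nor $\dim_{\Q_3}\X_3(E/F)^{C_2}=a+c$; the correct invariant is the total multiplicity $a+b+c$, which appears precisely because the regulator constants $\cC_\Theta(\triv)=\cC_\Theta(\epsilon)=\cC_\Theta(\rho)\equiv 3\pmod{\Q^{\times 2}}$ all coincide. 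Your hedge (``or an analogous self-dual piece'') shows you sensed the claim was not quite right, but as written it is false and would need to be replaced by the regulator-constant calculation. The rest is fine: for the second equality the paper indeed handles directly the places where $v$ splits (or has multiple primes) in $F/K$ and the good-reduction places, and then deforms the single-bad-prime additive case into a totally real curve with non-integral $j$-invariant, invoking the known $3$-parity there (Theorem \ref{2parthm}); that is exactly the strategy you describe.
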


These two parity statements are sufficient to deduce Theorem \ref{imain}:

\begin{proof}[Proof of Theorem \ref{imain}]
Take $F=K(E[2])$, so $\Gal(F/K)\subset\GL_2(\F_2)\iso S_3$.
By Corollary \ref{finsha},
the 2- and 3-primary parts of $\sha_{E/k}$ are finite
for $K\!\subset\! k\!\subset\! F$.

If $E$ has a $K$-rational 2-torsion point, the result follows from
Theorem~\ref{i2isogthm}. If $F/K$ is cubic, then $\rkalg EK$ and
$\rkalg EF$ have the same parity, because $E(F)\tensor\Q$ is a rational
$\Gal(F/K)\iso C_3$-representation, so its dimension has the same parity
as that of its $C_3$-invariants. Also, $w(E/K)=w(E/F)$ (see \cite{KT} p.167,
or prove this directly using the results of \S\ref{s:root}),
so the result again follows.

We are left with the case when
$\Gal(F/K)\iso S_3$.
Let $M$ be the quadratic extension of $K$ in $F$
and $L$ one of the cubic ones. By the above argument,
\beq
  \rkalg EM \text{ even} & \Longleftrightarrow & w(E/M)=1,\cr
  \rkalg EL \text{ even} & \Longleftrightarrow & w(E/L)=1.\cr
\eeq
On the other hand, by Theorem \ref{ithms3glo},
$$
    \rkalg EK\!+\!\rkalg EM\!+\!\rkalg EL\text{ is even}\>\>\Longleftrightarrow\>\>
    w(E/K)w(E/M)w(E/L)=1,
$$
and Theorem \ref{imain} is proved.
\end{proof}

\subsection{Local formulae for the Selmer parity}

There are two totally different steps in proving
Theorems \ref{i2isogthm} and \ref{ithms3glo}, one global and one local.
The first one is to relate the Selmer parity to the products of periods
and Tamagawa numbers (the first equality in the two theorems),
and the second one is a local computation comparing the latter to
root numbers (the second equality).

The first one is in the spirit of the example in \S\ref{s91b1}.
Recall that there we had two elliptic curves over $\Q$ and an equality of
$L$-functions,
$$
  L(E/\Q,s)=L(E'/\Q,s),
$$
that originated from an isogeny $E\to E'$, say of degree $d$.
Assuming finiteness of $\sha$, it implies an equality of the
Birch--Swinnerton-Dyer quotients $\BSD_{E/\Q}=\BSD_{E'/\Q}$,
which reads modulo squares
$$
  \frac{\Reg_{E'/\Q}}{\Reg_{E/\Q}} \equiv
  \frac{C_{E/\Q}}{C_{E'/\Q}} \mod \Q^{\times 2}.
$$
The `lattice index' argument (Lemma \ref{lregisog}) shows that the
left-hand side is the same as $d^{\rk E/\Q}$ modulo squares. So
we have an expression for the parity of $\rk E/\Q$,
which is a difficult {\em global} invariant in terms of easy
{\em local} data.
This is an absolutely crucial step in all parity-related proofs.
In can be made slightly finer, without assuming finiteness of $\sha$,
but at the expense of working with Selmer groups.
The precise statement is as follows:

\begin{notation}
For an isogeny $\phi: A\to A'$ of abelian varieties over $K$
we write $\phi^t: (A')^t\to A^t$ for the dual isogeny.
For a prime $p$ write
$$
  \phi_p: \X_p(A'/K)\to \X_p(A/K)
$$
for the induced map on the dual Selmer groups, and
$$
  \phi_{p,v}: A(K_v)\to A'(K_v)
$$ 
for the map on local points.
We let $\chi(\cdot):=|\ker(\cdot)|/|\coker(\cdot)|$.
\end{notation}

\begin{theorem}
\label{thmQ}
Let $\phi: A\to A'$ be a non-zero isogeny
of abelian varieties defined over a number field $K$.
For every prime $p$,
$$
  \frac{\chi(\phi_p^t)}{\chi(\phi_p)}
    =\text{ $p$-part of } \prod_v \chi(\phi_{p,v})
    =\text{ $p$-part of } \frac{C_{A/K} }{C_{A'/K}}.
$$
\end{theorem}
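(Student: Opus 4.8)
The plan is to prove the two equalities separately, both by comparing the two sides place by place.

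For the second equality, I would fix a global invariant differential $\omega$ on $A$ and a global invariant differential $\omega'$ on $A'$, and compute $\chi(\phi_{p,v})$ at each place $v$ in terms of the local factors $C(A/K_v,\omega)$ and $C(A'/K_v,\omega')$. The key input is the standard local analysis of $\phi$ on $A(K_v)$: at an archimedean place, $A(K_v)$ and $A'(K_v)$ are compact real Lie groups, and $|\ker\phi_{p,v}|/|\coker\phi_{p,v}|$ (at least its $p$-part) is read off from the determinant of the tangent map $d\phi$, which is exactly the ratio of periods $\int|\omega|$ vs.\ $\int|\omega'|$ (up to the rational factor $|\omega/\phi^*\omega'|_v$); at a finite place, $\phi_{p,v}$ on the formal groups contributes $|\omega/\phi^*\omega'|_v$, while on the component groups it contributes the ratio of Tamagawa numbers $c_v/c_v'$, all up to prime-to-$p$ factors. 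Summing over $v$, the differential-dependent factors collect into $\prod_v|\omega/\phi^*\omega'|_v$, which is $1$ by the product formula (since $\phi^*\omega'=\alpha\omega$ for some $\alpha\in K^\times$), leaving precisely the $p$-part of $C_{A/K}/C_{A'/K}$. This is essentially a Tamagawa-measure computation in the style of Tate's thesis / Schaefer's work on Selmer groups, so I would cite the relevant local lemmas rather than redo them.

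For the first equality, $\chi(\phi_p^t)/\chi(\phi_p) = p\text{-part of }\prod_v\chi(\phi_{p,v})$, the strategy is a global Euler-characteristic computation for Selmer groups. One writes down the defining exact sequence of the Selmer group associated to $\phi$ (and dually to $\phi^t$), fitting $\X_p(A'/K)\xrightarrow{\phi_p}\X_p(A/K)$ and the Selmer groups of the isogeny into the Cassels--Poitou--Tate nine-term exact sequence for $\phi$ and $\phi^t$. Taking alternating products of orders of the finite groups appearing there, the global cohomology terms $H^i$ cancel in pairs by Poitou--Tate duality (here one uses that the Cassels dual exact sequence pairs the $\phi$-side against the $\phi^t$-side), and what survives is exactly the product over all places of the local factors $|H^0(K_v,\cdot)|$-type contributions, which combine to $\prod_v\chi(\phi_{p,v})$. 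Compactness and finiteness issues are handled as usual: the product is really finite because $\phi_{p,v}$ is an isomorphism at all but finitely many $v$. This is the argument that appears (for elliptic curves and specific $\phi$) in the Dokchitser--Dokchitser papers, generalised to arbitrary isogenies of abelian varieties.

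\emph{Main obstacle.} The hardest part is the bookkeeping in the first equality: getting the Cassels--Poitou--Tate sequences for $\phi$ and $\phi^t$ to line up so that the global terms cancel cleanly, and correctly identifying the leftover local terms with $\chi(\phi_{p,v})$ rather than with some shifted or dualised version of them. One has to be careful that $\X_p$ is the dual Selmer group (Pontryagin dual of the $p^\infty$-Selmer group), so the map $\phi_p:\X_p(A'/K)\to\X_p(A/K)$ really is induced by $\phi$ and not by $\phi^t$, and that passing to the $\otimes\Q_p$ vector spaces does not lose the $p$-power factors we are trying to track — which is why the statement is only about the $p$-part and why one must work at the level of $X_p$ and finite Selmer groups throughout, passing to $\X_p$ only at the end. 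A secondary, more routine difficulty is assembling the local computations uniformly across archimedean, good-reduction, and bad-reduction finite places with the correct normalisation of $|\cdot|_v$, but that is standard once the global skeleton is in place.
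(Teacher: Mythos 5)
Your proposal is correct and follows the same route as the argument the paper points to (\cite{Squarity} \S4.1): a Cassels--Poitou--Tate duality computation for the first equality, and a place-by-place comparison of the local Euler characteristics $\chi(\phi_{p,v})$ with Tamagawa numbers and period ratios (collapsing the differential-dependent factors via the product formula) for the second. You also correctly flag the key bookkeeping subtlety, namely that $\chi(\phi_p)$ only makes sense at the level of the $\Z_p$-module $X_p$ rather than the $\Q_p$-vector space $\X_p$, which is a slight abuse in the statement as written.
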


The proof is an application of Poutou-Tate duality
(see e.g. \cite{Squarity} \S4.1).
For example, if $\phi: E\to E'$ is an isogeny of elliptic curves of
prime degree~$p$, then
$$
  \frac{C_{E/K}}{C_{E'/K}}
     =
  \frac{\chi(\phi_p^t)}{\chi(\phi_p)}
     \equiv
  \chi(\phi^t_p)\chi_p(\phi_p)
     = \chi([p]_p) = p^{\rksel EKp} \pmod{\Q^{\times2}},
$$
which is a formula of Cassels (see Birch \cite{Bir} or Fisher \cite{FisA}).
This extends the argument of \S\ref{s91b1}
and in particular Lemma \ref{lregisog} to an unconditional
statement for Selmer groups, and proves the first equality
of Theorem~\ref{i2isogthm} (with $p=2$).

\subsection{Parity in $S_3$-extensions}
\label{s:s3}

We now proved the first `global' step of theorem \ref{i2isogthm}.
Now we do the same for Theorem \ref{ithms3glo}, in other words
we claim the following\footnote{
  This is \cite{Squarity} Thm.~4.11 with $p=3$. Note that
  the contributions from $v|\infty$ to
  ${\frac{\CC EF\CC EK^2}{\CC EM\CC EL^2}}$
  cancel when using the same $K$-rational $\omega$ over each field.
  The definition of $C$ in \cite{Squarity} excludes infinite places,
  so the
  formula there does not need the ${\frac{\CC EK^2}{\CC EL^2}}$ term,
  as it is then a rational square.}

\begin{theorem}
\label{s3rksel}
Let
$F/K$ be an $S_3$-extension of number fields, $M$ and $L$~
inter\-mediate fields of degree 2 and 3 over $K$, and $E/K$ an
elliptic curve. Then
$$
  \rksel EK3+\rksel EM3+\rksel EL3 \equiv \ord_3\tfrac{\CC EF\CC EK^2}{\CC EM\CC EL^2} \mod 2.
$$
\end{theorem}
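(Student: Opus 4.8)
The plan is to deduce this from Theorem~\ref{thmQ} applied to a suitably chosen isogeny of abelian varieties built from Weil restrictions, exactly as the $2$-isogeny case was deduced from Cassels' formula. The representation-theoretic input is the identity of virtual $S_3$-representations
$$
  \triv_F \ominus \triv_M \ominus 2\cdot\triv_L \oplus 2\cdot\triv_K
  = \rho \ominus \rho,
$$
where $\triv_H$ denotes the permutation representation on cosets and $\rho$ is the $2$-dimensional irreducible of $S_3$; more precisely, $\triv_F=\triv_K\oplus\epsilon\oplus 2\rho$, $\triv_M=\triv_K\oplus\epsilon$, $\triv_L=\triv_K\oplus\rho$ (with $\epsilon$ the sign character), so $\triv_F\oplus\triv_K^{\oplus 2}$ and $\triv_M\oplus\triv_L^{\oplus 2}$ are \emph{isomorphic} $S_3$-representations, both equal to $3\triv_K\oplus\epsilon\oplus 2\rho$. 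Twisting by the $2$-dimensional Galois representation of $E/F$ and taking Weil restrictions down to $K$, this isomorphism of representations forces the abelian varieties
$$
  A=\Res_{F/K}(E_F)\times\Res_{K/K}(E)^2,\qquad
  A'=\Res_{M/K}(E_M)\times\Res_{L/K}(E_L)^2
$$
to have isomorphic rational $l$-adic representations, hence (by Faltings) there is an isogeny $\phi\colon A\to A'$ over $K$.

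Next I would run Theorem~\ref{thmQ} for this $\phi$ and the prime $p=3$. Since $3\nmid |S_3|=6$ is false --- wait, $3\mid 6$ --- but the representations $\triv_F\oplus\triv_K^2$ and $\triv_M\oplus\triv_L^2$ are genuinely \emph{isomorphic} as $\Z[S_3]$-modules up to a finite cokernel of order prime to... in fact over $\Z[1/6]$ they are isomorphic, and one only needs isomorphism of the associated $\Q_3$-representations together with control of the index, which is a power of~$2$. So $\chi(\phi_3^t)/\chi(\phi_3)$ differs from $\chi(\phi_3)^2$ (a rational square in the relevant sense --- one must be a little careful here, see below) and the left side of Theorem~\ref{thmQ} computes, modulo squares in $\Q_3^\times$, the quantity
$$
  3^{\rksel EF3+2\rksel EK3} \big/ 3^{\rksel EM3+2\rksel EL3}
  \equiv 3^{\rksel EK3+\rksel EM3+\rksel EL3}\pmod{\Q^{\times2}},
$$
using additivity of $\X_3$ under products and the Galois-invariance isomorphism \eqref{selinv} to rewrite $\rksel EF3,\rksel EM3,\rksel EL3$ in terms of $\rksel E{k}3$ for $K\subset k\subset F$. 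Wait: actually $\X_3(\Res_{F/K}(E_F)/K)=\X_3(E/F)$ has $\Q_3$-dimension $\rksel EF3$ directly, so no Galois descent is needed --- the whole point of using Weil restrictions is to linearise. The right-hand side of Theorem~\ref{thmQ} is the $3$-part of $C_{A/K}/C_{A'/K}$, and since $C$ is multiplicative in products and invariant under Weil restriction up to the archimedean fudge noted in the footnote, this is exactly $\ord_3\bigl(\CC EF\CC EK^2 / \CC EM\CC EL^2\bigr)$.

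I expect the main obstacle to be the bookkeeping around squares and $2$-parts: the index $[\text{lattice}:\phi(\text{lattice})]$ of the isogeny $\phi$ is a $2$-power (coming from the difference between the two $\Z[S_3]$-lattices $\triv_F\oplus\triv_K^2$ and $\triv_M\oplus\triv_L^2$, which agree after inverting~$2$ but may differ at~$2$), so on the $3$-primary side everything it contributes is a square and drops out of the mod-$2$ statement --- but one has to verify this rather than wave at it, exactly as in the proof of Theorem~\ref{thmQ} and its consequences. A cleaner route, avoiding Faltings entirely, is to observe that $\triv_F\oplus\triv_K^{\oplus2}\cong\triv_M\oplus\triv_L^{\oplus2}$ already as \emph{permutation}-like $\Q[S_3]$-modules, so there is a direct isogeny between the Weil restrictions given by an explicit correspondence, and then Theorem~\ref{thmQ} applies verbatim; this is the approach of \cite{Squarity} Thm.~4.11 that the footnote cites, so I would follow that, citing it for the representation-theoretic lemma and for the Poitou--Tate duality computation, and merely check that specialising its general $p$-statement to $p=3$ and reinstating the archimedean periods (which cancel, being the same $K$-rational $\omega$ on each field) yields precisely the displayed congruence.
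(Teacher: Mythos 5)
Your plan is the ``direct'' proof that the paper itself points to (in the remark after Theorem~\ref{selfd}: \emph{``for an explicit group such as $G=S_3$ it is possible to give a direct proof of this, by constructing an isogeny between the products of the relevant Weil restrictions and applying Theorem~\ref{thmQ}; this is how it is done in \cite{Squarity} Thm.~4.11''}), so it is a legitimate route. The paper's own exposition, however, does not construct the isogeny. It first gives, in \S\ref{s:s3}, a conditional computation of the quotient of regulators over $F$, $M$, $L$, $K$ under finiteness of $\sha$, packaged via Lemma~\ref{lrel}; and then the unconditional statement is deduced by quoting Theorem~\ref{selfd} (self-duality of $\X_p$ and the regulator-constant identity, imported wholesale from~\cite{Selfduality}) together with the computation $\cC_\Theta(\triv)=\cC_\Theta(\epsilon)=\cC_\Theta(\rho)=3$. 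So the paper buys generality (arbitrary $G$ and $\Theta$) at the cost of invoking an external duality theorem, whereas your approach buys elementariness (only Theorem~\ref{thmQ}, i.e.\ Poitou--Tate for an isogeny) at the cost of having to carefully construct and analyse a specific isogeny.

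One concrete error in your write-up: the assertion that the index of the two $\Z[S_3]$-lattices $\triv_F\oplus\triv_K^{\oplus2}$ and $\triv_M\oplus\triv_L^{\oplus2}$ is a power of~$2$ is false. The lattices are only guaranteed to become isomorphic after inverting $|G|=6$, and $3$ genuinely intervenes. For instance $\Z[S_3/C_2]\cong\Z^3$ contains the sublattice $\Z(e_1+e_2+e_3)\oplus A_2$ (trivial plus the root lattice $A_2$ realising $\rho$) with index~$3$, not~$2$. So the $3$-part of the lattice index does not automatically drop out of the mod-$2$ statement; on the contrary, accounting for it is precisely the point where one extracts $3^{\rksel EK3+\rksel EM3+\rksel EL3}$ from $\chi(\phi_3^t)\chi(\phi_3)$. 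You cannot wave this away as a $2$-adic artefact; you must compute $\phi^t\phi$ isotypically (this is what \cite{Squarity} Thm.~4.11 does, and what the regulator constants $\cC_\Theta(\rho_i)=3$ encode on the paper's side). If you instead appeal to Faltings to merely assert \emph{existence} of some isogeny, you lose all control over its degree and the argument collapses, so the ``explicit correspondence'' route at the end of your proposal is the one you must take, and it requires the lattice computation you deferred.

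Finally, a small correction of emphasis: the Galois-descent isomorphism~\eqref{selinv} is not needed to identify $\X_3(\Res_{F/K}E/K)$ with $\X_3(E/F)$ (that is Shapiro's lemma / invariance of Selmer under Weil restriction, a separate and unconditional fact), but you do still use \eqref{selinv}-type descent when you want to rewrite the answer in terms of $\rk_3 E/K$, $\rk_3 E/M$, $\rk_3 E/L$ rather than the $G$-isotypic multiplicities $n_\triv,n_\epsilon,n_\rho$ in $\X_3(E/F)$; the two are linked exactly by the dimension count $\rk_3 E/F^H=\dim\X_3(E/F)^H=\langle\C[G/H],\X_3(E/F)\rangle$ displayed in~\S\ref{s:s3}.
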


As it was done in \S\ref{s91b1} for the isogeny case, we
start with a slightly simpler version first, assuming finiteness of $\sha$.
Again, we will use a relation between $L$-functions, but this time
it is one of a different nature.

Thus, suppose $G=\Gal(F/K)\iso S_3$ and $M$ and $L$ are
as in the theorem. Let $E/K$ be an elliptic curve for which
$\sha_{E/F}$ is finite. By Corollary \ref{finsha},
$\sha_{E/K}$, $\sha_{E/M}$ and $\sha_{E/L}$ are finite as well.

The group $S_3$ has 3 irreducible representations, namely
$\triv{}$ (trivial), $\epsilon$ (sign) and a 2-dimensional representation
$\rho$, all defined over $\Q$.
The list of subgroups of $S_3$ up to conjugacy is
$$
  \cH=\{1,C_2,C_3,S_3\},
$$
and they correspond by Galois theory to $F, L, M$ and $K$ respectively.
Each $H\in\cH$ gives rise to a representation $\C[G/H]$ of $G$,
associated to the $G$-action on the left cosets of $H$ in $G$.
Because there are {\em four} subgroups and only {\em three} irreducible
representations, there is a relation between these. Writing out
$$
  \C[G]\iso \triv\oplus\epsilon\oplus\rho\oplus\rho,\quad
  \C[G/C_3]\iso \triv\oplus\epsilon\quad\text{and}\quad
  \C[G/C_2]\iso \triv\oplus\rho,
$$
we find that the (unique such) relation is
\beql{eqs3rel}
  \C[S_3] \oplus
  \C[S_3/S_3]^{\oplus 2}
     \iso
  \C[S_3/C_3] \oplus \C[S_3/C_2]^{\oplus 2}.
\eeql
Now tensor this relation with the $l$-adic representation
$V_l(E/K)_\C=V_l(E/K)\tensor_{\Q_l}\C$ (embedding $\Q_l\injects\C$ in some way).
By the Artin formalism for $L$-functions (see \S\ref{s:root}),
for every $H\in\cH$,
$$
  L(V_l(E/K)_\C\tensor\C[G/H],s) = L(E/F^H,s),
$$
so we have a relation between $L$-functions\footnote
{The elliptic curve has nothing to do with this: this relation already
exists on the level of Dededind zeta-functions of the number fields.
This forces relations between the regulators and class groups
of number fields, and they have been studied by Brauer, Kuroda, de Smit
and others (see e.g. \cite{Bar} for references).},
$$
  L(E/F,s)L(E/K,s)^2 = L(E/M,s)L(E/L,s)^2.
$$
Applying Lemma \ref{lrel}, we get a relation between the regulators
and the products of periods and Tamagawa numbers,
\beql{rc1}
  \frac{\Reg_{E/F}\Reg_{E/K}^2}{\Reg_{E/M}\Reg_{E/L}^2} \equiv
  \frac{C_{E/M}C_{E/L}^{2}}{C_{E/F}C_{E/K}^{2}} \pmod{\Q^{\times2}}\>.
\eeql
How do we interpret the left-hand side, and why is it even a rational number?
Tensor the Mordell-Weil group $E(F)$ with $\Q$ and decompose it as a
$G$-representation,
$$
  V = E(F)\tensor\Q \iso \triv^{\oplus a}\oplus \epsilon^{\oplus b}
                     \oplus \rho^{\oplus c}.
$$
Next, compute the ranks of $E$ over the intermediate fields of $F/K$
in terms of $a$, $b$ and $c$: using Frobenius reciprocity,
for a subgroup $H$ of $S_3$ we have
$$
  \rk E/F^H \!=\! \dim V^H \!=\! \langle \triv, V|_H\rangle_H
            \!=\! \langle \C[G/H], V\rangle_G \!=\!
  \Biggl\{
  \begingroup
  \smaller[2]
  \begin{array}{ll}
  a, & H=S_3.\\[-1pt]
  a+b, & H=C_3\\[-1pt]
  a+c, & H=C_2\\[-1pt]
  a+b+2c,   & H=\{1\}.
  \end{array}
  \endgroup
$$
So, let $P_1,...,P_a$ be a basis of $E(K)\tensor\Q$ (the `trivial' part),
and complement it to a basis of $E(M)\tensor\Q$ with $Q_1,...,Q_b$
(the `sign' part) and to a basis of $E(L)\tensor\Q$ with $R_1,...,R_c$.
By clearing the denominators, we may assume that the $P$'s, $Q$'s and $R$'s
are actual points in $E(F)$.
If $g\in\Gal(F/K)$ is an element of order $3$, then
$$
  E(F)\tensor\Q=\langle P_1,...,P_a,Q_1...,Q_b,R_1,..,R_c,R_1^g,..,R_c^g\rangle,
$$
in other words these points form a basis of a subgroup of $E(F)$
of finite index. Now we can compute all the regulators.
Consider the three determinants
$$
  \cP = \det (\langle P_i, P_j \rangle_F), \quad
  \cQ = \det (\langle Q_i, Q_j \rangle_F), \quad\text{and}\quad
  \cR = \det (\langle R_i, R_j \rangle_F),
$$
where $\langle,\rangle_F$ is the N\'eron-Tate height pairing for $E/F$.
Recall that the pairing is normalised in such a way that it changes if
computed over a different field by the degree of the field extension.
For instance,
$$
  \langle P_i,P_j\rangle_K=\frac16 \langle P_i,P_j\rangle_F,
$$
so $\Reg_{E/K}$ is $(\frac16)^a \cdot \cP \cdot \square$, where $\square$
is the inverse square of the index of the lattice spanned by the $P_i$ in
$E(K)$. As we are only interested in regulators {\em modulo squares}, we may
ignore this. Similarly,
$$
  \Reg_{E/M} = (\frac13)^{a+b} \cP \cQ\cdot\square
  \qquad\text{and}\qquad
  \Reg_{E/L} = (\frac12)^{a+c} \cP \cR\cdot\square,
$$
because all $\langle P_i,Q_j\rangle_F$ and $\langle P_i,R_j\rangle_F$ are 0,
so both regulators are really products of two determinants.
(The height pairing is Galois-invariant, so different isotypical components
are always orthogonal to each other with respect to it.)
Finally, using Galois invariance again, together with the fact that
$R_i+R_i^g+R_i^{g^2}$ is $S_3$-invariant and so orthogonal to $R_j$, we find
that $\langle R_i,R_j^g\rangle_F=-\tfrac12\langle R_i,R_j\rangle_F$, so
$$
  \Reg_{E/F} = \cP \cdot \cQ \cdot \det \smallmatrix{A}{-\tfrac12 A}{-\tfrac12 A}{A} \cdot\square
            = \cP \cdot \cQ \cdot 3^c \cR^2\cdot\square,
$$
where $A$ is the matrix $(\langle R_i,R_j\rangle)_{i,j}$. Combining the four regulators yields
\beq
  \displaystyle
  \frac{\Reg_{E/F}\Reg_{E/K}^2}{\Reg_{E/M}\Reg_{E/L}^2}
    &\equiv&  \displaystyle \!\!\frac{ 3^c\cP\cQ\cR^2 \cdot (6^a\cP)^2 }{ 3^{a+b}\cP\cQ \cdot (2^{a+c}\cP\cR)^2 }
    \equiv 3^{a+b+c} \\[3pt]
    &\equiv& \!\!3^{a+(a+b)+(a+c)} \cr
    &\equiv& 3^{\rk E/K+\rk E/M+\rk E/L}\mod\Q^{\times2}.
\eeq
Together with \eqref{rc1}
this proves Theorem \ref{s3rksel}, assuming finiteness of $\sha$.

\subsection{Brauer relations and regulator constants}

The results of \S\ref{s:s3} generalise to arbitrary Galois groups
as follows (see \cite{Squarity,Tamroot} for details).

\begin{definition}
\label{grelation}
Let $G$ be a finite group, and write $\cH$ for the
set of representatives of subgroups of $G$ up to conjugacy.
A formal linear combination
$$
  \Theta = \sum\nolimits_i n_i H_i \qquad (n_i\in\Z,\> H_i\in\cH)
$$
is a {\em Brauer relation\/} if
$\oplus_i\C[G/H_i]^{\oplus n_i}=0$ as a virtual representation,
i.e. if the character
$\sum_i n_i \Ind_{H_i}^G\triv_{H_i}$ is zero.
\end{definition}

\begin{example}
\label{exdih}
The dihedral group $G=D_{2p}$ for an odd prime $p$ has a relation
$$
  \Theta = \>\{1\} - 2C_2 - C_p + 2\>G,
$$
the only one in $G$ up to multiples. For $p=3$ this is the relation
\eqref{eqs3rel}.
\end{example}

If $G=\Gal(F/K)$ is a Galois group and $E/K$ is an elliptic curve,
every Brauer relation $\Theta=\sum n_i H_i$ in $G$ gives an identity
of $L$-functions
$$
  \prod_i L(E/M_i,s)^{n_i}=1 \qquad\quad (M_i=F^{H_i}),
$$
and so a relation like \eqref{rc1} between regulators and Tamagawa numbers,
assuming that $\sha_{E/F}$ is finite. As in the case $G=S_3$, the
left-hand side $\prod_i\Reg_{E/M_i}{}^{n_i}$ depends only on
$E(F)\tensor\Q$ as a $G$-representation:

\begin{theorem}
\label{tregconst}
Let $G$ be a finite group and $\Theta=\sum n_i H_i$ a Brauer relation in~$G$.
Suppose $V$ is a $\Q G$-representation and $\lara: V\times V\to \R$ is a
non-degenerate bilinear $G$-invariant pairing\footnote
{It exists for every such $V$, as every rational representation is self-dual}.
Then
$$
  \cC_\Theta(V) := \prod_i \det \bigl(\tfrac{1}{|H_i|}\langle,\rangle\bigm| V^{H_i}\bigr)^{n_i}
$$
is a well-defined number in $\Q^\times/\Q^{\times 2}$,
and it is independent of the choice of the pairing $\langle,\rangle$.
\end{theorem}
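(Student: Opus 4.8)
The plan is to prove two independent things: (i) that $\cC_\Theta(V)$ is a \emph{rational} number, and more precisely lies in $\Q^\times$ modulo squares, and (ii) that it does not depend on the choice of the invariant pairing $\langle,\rangle$. For (i), I would first reduce to the case where $V$ is irreducible. Since $\Theta$ is a Brauer relation, the assignment $V\mapsto\cC_\Theta(V)$ is multiplicative in $V$: a direct sum $V=V_1\oplus V_2$ with an orthogonal pairing gives $\cC_\Theta(V)=\cC_\Theta(V_1)\cC_\Theta(V_2)$, because $(V_1\oplus V_2)^{H_i}=V_1^{H_i}\oplus V_2^{H_i}$ and the Gram determinant of a block-diagonal pairing factors. (One must note that if the chosen pairing on $V$ is not orthogonal for a given decomposition, this will be taken care of by part (ii), so it suffices to treat each rational irreducible with \emph{some} pairing.) Thus it is enough to show $\cC_\Theta(V)\in\Q^\times$ for $V$ an irreducible $\Q G$-representation equipped with one convenient pairing. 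For such a $V$, choosing a $G$-invariant $\Q$-valued pairing, each $V^{H_i}$ is a $\Q$-subspace on which the pairing restricts to a non-degenerate $\Q$-valued form — non-degeneracy holds because $\langle,\rangle$ is $G$-invariant, so the orthogonal complement of $V^{H_i}$ is $G$-stable and, by irreducibility of the ambient representation together with a simple averaging argument, meets $V^{H_i}$ trivially. Hence each factor $\det(\tfrac1{|H_i|}\langle,\rangle\,|\,V^{H_i})$ is a nonzero rational, and so is the product. (That the product is nonzero, i.e. that the relation does not force a factor to be $0$, follows since each individual determinant is already nonzero.)

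For part (ii), the independence of the pairing, I would argue as follows. Suppose $\langle,\rangle$ and $\langle,\rangle'$ are two non-degenerate $G$-invariant bilinear pairings on $V$. Then there is a $G$-equivariant automorphism $\alpha\in\Aut_{\Q G}(V)$ with $\langle x,y\rangle'=\langle\alpha x,y\rangle$ for all $x,y$ — this is the standard fact that any two non-degenerate invariant forms differ by an element of the (semisimple) endomorphism algebra $\End_{\Q G}(V)$, which acts invertibly since both forms are non-degenerate. The key point is that $\alpha$ commutes with the projector onto $V^{H_i}$ (namely $\tfrac1{|H_i|}\sum_{h\in H_i}h$), hence preserves each $V^{H_i}$, so on $V^{H_i}$ we get $\det(\langle,\rangle'\,|\,V^{H_i})=\det(\alpha|_{V^{H_i}})\cdot\det(\langle,\rangle\,|\,V^{H_i})$. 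Therefore
$$
  \frac{\cC_\Theta(V,\langle,\rangle')}{\cC_\Theta(V,\langle,\rangle)}
    = \prod_i \det(\alpha|_{V^{H_i}})^{n_i}.
$$
Now $\det(\alpha|_{V^{H_i}})$ is a polynomial expression in the entries of $\alpha$, and in fact a \emph{character-theoretic} quantity: $\dim V^{H_i}=\langle\Ind_{H_i}^G\triv,\,V\rangle$ and, more generally, for a fixed semisimple $\alpha$ the number $\det(\alpha|_{V^{H_i}})$ depends only on the character of the $G$-module $V$ and on $\alpha$ through "how $\alpha$ acts on each isotypical piece". Since $\Theta$ is a Brauer relation, $\sum_i n_i\Ind_{H_i}^G\triv=0$ as a virtual character, and this forces $\prod_i\det(\alpha|_{V^{H_i}})^{n_i}=1$. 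Concretely: decompose $\alpha$ over $\bar\Q$ into eigenspaces; on each isotypical component $V$ decomposes as (copies of an irreducible) and $\alpha$ acts through a scalar on that Wedderburn block up to the structure of the division algebra — tracking determinants, each eigenvalue $\lambda$ of $\alpha$ contributes a factor $\lambda$ raised to $\sum_i n_i(\dim$ of $\lambda$-part of $V^{H_i})$, and the vanishing of $\Theta$ as a virtual permutation character makes every such exponent zero.

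The main obstacle is the last step of part (ii): making precise why $\prod_i\det(\alpha|_{V^{H_i}})^{n_i}=1$ for \emph{every} $G$-equivariant $\alpha$, not just scalars. The clean way is to observe that $\mathbf{tr}$ of any power of $\alpha$ on $V^{H_i}$ equals $\mathbf{tr}$ of $(\text{that power of }\alpha)$ times the projector $e_{H_i}=\tfrac1{|H_i|}\sum_{h\in H_i}h$ acting on $V$; since $\sum_i n_i\,\Ind_{H_i}^G\triv=0$ translates into $\sum_i n_i\,e_{H_i}=0$ as an element of $\Q[G]$ acting on any representation — this is exactly the statement that the virtual permutation character vanishes, evaluated in the group algebra — we get $\sum_i n_i\,\mathbf{tr}(\alpha^k|_{V^{H_i}})=\mathbf{tr}\bigl(\alpha^k\cdot(\sum_i n_i e_{H_i})\,\big|\,V\bigr)=0$ for all $k\ge1$. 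Feeding all power sums into Newton's identities yields $\sum_i n_i\log\det(\alpha|_{V^{H_i}})=0$ formally, hence $\prod_i\det(\alpha|_{V^{H_i}})^{n_i}=1$. (Equivalently, $\prod_i\det(1-t\,\alpha|_{V^{H_i}})^{n_i}=1$ as a power series, and comparing top-degree coefficients gives the determinant identity.) Once this is in hand, combining with part (i) shows $\cC_\Theta(V)$ is a well-defined class in $\Q^\times/\Q^{\times2}$.
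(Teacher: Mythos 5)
Your overall strategy (prove independence of the pairing first, then use it to reduce rationality to a $\Q$-valued form on irreducibles) is sound, and more abstract than the paper's explicit basis computation carried out for $G=S_3$ in \S\ref{s:s3}; the paper itself refers to \cite{Squarity,Tamroot} for the general case. But there are concrete errors in your justifications. Two smaller ones: the orthogonal complement of $V^{H_i}$ is not $G$-stable unless $H_i\normal G$ --- the right reason the restricted pairing on $V^{H_i}$ is non-degenerate is that $e_{H_i}=\tfrac1{|H_i|}\sum_{h\in H_i}h$ is self-adjoint with respect to any $G$-invariant form, so $V=\operatorname{im}(e_{H_i})\oplus\ker(e_{H_i})$ is an orthogonal decomposition; and since the pairings are $\R$-valued, $\alpha$ lives in $\Aut_{\R G}(V\otimes\R)$, not $\Aut_{\Q G}(V)$ (which is fine for the determinant identity but explains why rationality is not automatic).

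The serious error is in the passage you call ``the clean way'': the assertion that $\sum_i n_i e_{H_i}=0$ in $\Q[G]$ is false. A Brauer relation only forces $\sum_i n_i e_{H_i}$ to have \emph{trace} zero on every $G$-module, not to vanish as an element; for $G=S_3$ and $\Theta=\{1\}-2C_2-C_3+2S_3$ one computes $\sum_i n_i e_{H_i}=-\tfrac23(12)+\tfrac13(13)+\tfrac13(23)\ne 0$ (and this element is not even canonically defined, since $e_{C_2}$ depends on a choice of representative within its conjugacy class). Fortunately the conclusion $\prod_i\det(\alpha|_{V^{H_i}})^{n_i}=1$ is correct and the ``concrete'' isotypic argument you sketch just before it proves it cleanly: writing $V_\C\iso\bigoplus_\rho\rho\otimes M_\rho$, the $G$-equivariant $\alpha$ acts as $\id_\rho\otimes A_\rho$ and $e_{H_i}$ as $(e_{H_i}|_\rho)\otimes\id$, so $\det(\alpha|_{V^{H_i}})=\prod_\rho(\det A_\rho)^{\dim\rho^{H_i}}$, and each exponent $\sum_i n_i\dim\rho^{H_i}=\bigl\langle\sum_i n_i\Ind_{H_i}^G\triv,\rho\bigr\rangle=0$. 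The trace argument can be salvaged by the same tensor factorisation --- $\tr(\alpha^k e_{H_i}|V)=\sum_\rho(\dim\rho^{H_i})\tr(A_\rho^k)$ --- but the cancellation comes from these per-block exponents vanishing, not from the (false) vanishing of $\sum_i n_i e_{H_i}$ in the group algebra.
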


The notation $\det \bigl(\tfrac{1}{|H_i|}\langle,\rangle\bigm| V^{H_i}\bigr)$
means the following: pick a basis $\{P_j\}$ of the invariant subspace $V^{H_i}$
and compute the determinant of the matrix whose entries are
$\tfrac{1}{|H_i|}\langle P_j, P_k\rangle$. Up to rational squares, this
is independent on the basis choice and the total product is
well-defined in $\R^\times/\Q^{\times 2}$. The theorem asserts that
it is in fact in $\Q^\times/\Q^{\times 2}$, and independent of
$\langle,\rangle$.

\begin{remark}
\label{anyk}
The theorem also holds with $\Q$ replaced by any other field $k$
where $|G|$ is invertible, $V$ by a self-dual $kG$-representation
and $\R$ by any field containing $k$.
\end{remark}

The proof of the theorem is reasonably straightforward
(see \cite{Squarity} \S2 or \cite{Tamroot}~\S2.ii), in the spirit
of what we did in \S\ref{s:s3} explicitly for $G=S_3$.
An immediate consequence is that if $\rho_1,...,\rho_k$
are all irreducible $\Q G$-representations, then the numbers
$\cC_\Theta(\rho_1),...,\cC_\Theta(\rho_k)$, called {\em regulator constants},
determine everything. In other words,
if we decompose $V=\bigoplus {\rho_j}^{\oplus a_j}$, then
$$
  \cC_\Theta(V) = \prod_j \cC_\Theta(\rho_j)^{a_j},
$$
as we can obviously pick a `diagonal' pairing on $V$ which respects the
decomposition.

\begin{example}
For $G=S_3$ and $\Theta=\{1\} - 2C_2 - C_3 + 2\>S_3$, we have
$$
  \cC_\Theta(\triv)=\cC_\Theta(\epsilon)=\cC_\Theta(\rho)=3 \in \Q^\times/\Q^{\times2}.
$$
(For example, if we choose the obvious trivial pairing on $\triv$, then
$$
  \cC_\Theta(\triv) = 1 \cdot (\tfrac12)^{-2}
    \cdot (\tfrac13)^{-1} \cdot (\tfrac16)^2 \equiv 3 \mod \Q^\times/\Q^{\times2},
$$
and similarly for $\epsilon$ and $\rho$.)
\end{example}

\begin{corollary}
\label{corregc}
Suppose $E/K$ is an elliptic curve, $F/K$ a Galois extension with
Galois group $G$ and $\Theta=\sum n_i H_i$ a Brauer relation in $G$.
Decompose $E(F)\tensor\Q=\bigoplus\rho_k^{\oplus a_k}$ into
irreducible $G$-representations. Then
$$
  \prod_i\Reg_{E/F^{H_i}}{}^{n_i} = \prod_k \cC_\Theta(\rho_k)^{a_k},
$$
\end{corollary}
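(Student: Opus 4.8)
The plan is to combine the $L$-function relation coming from the Brauer relation $\Theta$ with the isogeny-invariance Lemma~\ref{lrel}, and then identify the resulting product of regulators as the regulator constant $\cC_\Theta$ applied to the Mordell--Weil representation. First I would observe that $\Theta=\sum_i n_i H_i$ being a Brauer relation means $\sum_i n_i \Ind_{H_i}^G\triv = 0$ as virtual characters, so tensoring with $V_l(E/K)_\C$ and applying the Artin formalism for $L$-functions (as in \S\ref{s:s3} for $S_3$) gives the identity $\prod_i L(E/F^{H_i},s)^{n_i}=1$. Since $\sha_{E/F}$ is finite, Corollary~\ref{finsha} shows $\sha_{E/F^{H_i}}$ is finite for every $i$, so Lemma~\ref{lrel} applies and yields
$$
  \prod_i \BSD_{E/F^{H_i}}{}^{n_i}=1.
$$

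Next I would spell out the $\BSD$ quotient and take the relation modulo rational squares. The factors $|\sha|$ and $|E(\cdot)_{\tors}|^2$ contribute rational squares (for $\sha$, because its order is a square — or more carefully, because we only claim the statement modulo squares and the torsion-square term is visibly a square; the $\sha$-orders are conjecturally square, and in any case the statement of Corollary~\ref{corregc} is an equality in $\Q^\times/\Q^{\times2}$, matching how $\cC_\Theta$ is defined). The discriminant factors $\sqrt{|\Delta_{F^{H_i}}|}$ and the archimedean periods are handled by the footnote's observation: using one fixed $K$-rational differential $\omega$ over every field, the archimedean contributions to $\prod_i C_{E/F^{H_i}}^{n_i}$ cancel, and similarly the discriminant terms combine into the same thing that would appear for the Dedekind zeta relation, which is a rational square by the classical Brauer-relation identity for discriminants. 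Hence, modulo $\Q^{\times2}$,
$$
  \prod_i \Reg_{E/F^{H_i}}{}^{n_i} \equiv \prod_i \Bigl(\tfrac{1}{\text{const}_i} \cdot C_{E/F^{H_i}}^{-1}\Bigr)^{n_i}
$$
but the cleaner route is to go directly: I would show $\prod_i \Reg_{E/F^{H_i}}{}^{n_i} = \cC_\Theta(E(F)\tensor\Q)$ purely representation-theoretically, without reference to $C$'s at all.

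For that last identification, set $V=E(F)\tensor\Q$ with the N\'eron--Tate height pairing over $F$, which is a non-degenerate $G$-invariant pairing on $V$ by Galois-invariance of heights. The key point is the normalisation recalled in \S\ref{s:s3}: the height pairing over a subfield $F^{H_i}$ equals $\tfrac{1}{|H_i|}$ times the pairing over $F$, restricted to $V^{H_i}=(E(F)\tensor\Q)^{H_i}=E(F^{H_i})\tensor\Q$. Therefore $\Reg_{E/F^{H_i}}$ equals $\det\bigl(\tfrac{1}{|H_i|}\langle,\rangle \bigm| V^{H_i}\bigr)$ up to a rational square (the square being the index of the genuine Mordell--Weil lattice inside its $\Q$-span, which is irrelevant mod squares). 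Taking the product over $i$ with exponents $n_i$ gives exactly $\cC_\Theta(V)$ by Definition in Theorem~\ref{tregconst}, and then the multiplicativity $\cC_\Theta(V)=\prod_k \cC_\Theta(\rho_k)^{a_k}$ for the decomposition $V=\bigoplus\rho_k^{\oplus a_k}$ finishes the proof.

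The main obstacle I anticipate is bookkeeping of the non-regulator factors in $\BSD$ to be sure they contribute only rational squares: the torsion term is a square outright, but $|\sha|$ is only conjecturally a square (Cassels' pairing is alternating, so it is a square when $\sha$ is finite, which it is here), and the discriminant/period ratio requires invoking either the footnote's cancellation argument or the classical fact that $\prod_i |\Delta_{F^{H_i}}|^{n_i}$ is a rational square for a Brauer relation $\Theta$. None of this is deep, but it is the part where the $\Q^\times/\Q^{\times2}$ statement has to be justified carefully rather than the clean representation-theoretic core, which is immediate from the normalisation of heights and Theorem~\ref{tregconst}.
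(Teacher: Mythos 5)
Your final argument is exactly the paper's proof: take $V=E(F)\tensor\Q$ with the N\'eron--Tate height pairing over $F$, use the normalisation $\langle,\rangle_{F^{H_i}}=\tfrac{1}{|H_i|}\langle,\rangle_F$ on $V^{H_i}=E(F^{H_i})\tensor\Q$ to recognise each $\Reg_{E/F^{H_i}}$ as $\det\bigl(\tfrac{1}{|H_i|}\langle,\rangle\bigm|V^{H_i}\bigr)$ modulo squares, and then apply Theorem~\ref{tregconst} and multiplicativity of $\cC_\Theta$. One remark on the detour you take first: the $L$-function/$\BSD$ route requires $\sha_{E/F}$ to be finite, which is \emph{not} a hypothesis of Corollary~\ref{corregc} (and is not needed, since the statement is a pure Mordell--Weil assertion). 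You correctly abandon that route, but it is worth noting that it would not even prove the corollary as stated; the direct representation-theoretic argument is not merely cleaner but necessary here.
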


\begin{proof}
Take $V=E(F)\tensor\Q$ and $\langle,\rangle$ the N\'eron-Tate height
pairing on $V$.
\end{proof}

For $G=S_3$, together with \eqref{rc1} this corollary reproves
Theorem \ref{s3rksel}, still assuming that $\sha_{E/F}$ is finite.

Finally, we discuss the modification necessary to turn this
into an unconditional statement about Selmer ranks.
Suppose $E/K$ is an elliptic curve, $\Gal(F/K)=G$ as before and $p$ is
a prime. The dual Selmer $\X_p(E/F)$ (cf.~Remark \ref{remrkp})
is a $\Q_p G$-representation which now plays a role analogous to the
$\Q G$-representation $E(F)\tensor\Q$.

\begin{theorem}
\label{selfd}
\par\noindent\par
\begin{enumerate}
\item
$\X=\X_p(E/F)$ is a self-dual $\Q_p G$-representation. In other words, it
possesses a $G$-invariant $\Q_p$-valued non-degenerate bilinear pairing.
\item
For any such pairing $\lara$
and a Brauer relation $\Theta=\sum n_i H_i$ in $G$, we have
$$
  \ord_p
  \Bigl(\prod_i \det\bigl(\tfrac{1}{|H_i|}\langle,\rangle\bigm|\X^{H_i}\bigr)^{n_i}\Bigr)
  \equiv
  \ord_p
  \prod_i C_{E/F^{H_i}}{}^{n_i}
  \mod 2.
$$
\end{enumerate}
\end{theorem}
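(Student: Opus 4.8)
The plan is to deduce the theorem from Theorem \ref{thmQ} together with the functorial properties of $\X_p$ recalled in Remark \ref{remrkp}. Part (1) first: every Brauer relation is built out of permutation modules $\C[G/H]$, but the key structural fact is that $\X_p(E/F)$ carries a pairing coming from arithmetic duality. Concretely, for each subgroup $H\leq G$ with fixed field $F^H$, the isomorphism \eqref{selinv} gives $\X_p(E/F)^H=\X_p(E/F^H)$, and Cassels--Tate / Poitou--Tate duality provides a canonical non-degenerate pairing on $\X_p(E/F^H)$ valued in $\Q_p$ (this is the source of the $\sha$ having square order). What one needs is a single $G$-invariant pairing on $\X=\X_p(E/F)$ restricting correctly on invariants; equivalently, one needs $\X$ to be self-dual as a $\Q_p G$-module. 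I would obtain this by exhibiting an explicit $G$-equivariant isomorphism $\X\to\X^*$: the Cassels--Tate pairing on $\X_p(E/F)$ is $\Gal(F/F)$-functorial, hence $\Gal(F/K)=G$-equivariant (since $G$ acts by automorphisms of $F$ fixing $K$), and non-degenerate modulo the divisible part — but after tensoring with $\Q_p$ it becomes genuinely non-degenerate. So part (1) amounts to: Cassels--Tate gives a perfect $G$-invariant pairing on $X_p(E/F)\otimes\Q_p$.

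For part (2), I would proceed exactly as the $S_3$ computation in \S\ref{s:s3} was carried out for Mordell--Weil groups, but now with $\X=\X_p(E/F)$ in place of $E(F)\otimes\Q$ and working $\ord_p$ rather than modulo rational squares. Write $\Theta=\sum n_i H_i$. The left-hand side is, by definition, $\cC_\Theta(\X)$ computed with the chosen pairing $\lara$; by the analogue of Theorem \ref{tregconst} valid over $\Q_p$ (Remark \ref{anyk}, with $k=\Q_p$, using $p\nmid|G|$ — which holds here since the relevant groups are $S_3$ and $p=3$ is… wait, actually one must be careful, so one invokes the general regulator-constant formalism of \cite{Squarity}\,\S2 which handles this), $\ord_p\cC_\Theta(\X)$ depends only on $\X$ as a $\Q_p G$-representation and is independent of the pairing. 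So decompose $\X=\bigoplus_k\rho_k^{\oplus a_k}$ into irreducibles; then $\ord_p$ of the left side equals $\sum_k a_k\,\ord_p\cC_\Theta(\rho_k)$, computed as in the $S_3$ example.

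The heart of the matter is matching this to $\ord_p\prod_i C_{E/F^{H_i}}{}^{n_i}$, and this is where Theorem \ref{thmQ} enters. The relation $\Theta$ gives an identity of permutation characters, hence — after inducing up and invoking Faltings as in Lemma \ref{lrel}, or more directly via the isogeny-type arguments underlying Theorem \ref{thmQ} — a multiplicative relation among the $C_{E/F^{H_i}}$ that is governed $p$-adically by the $\chi$-invariants of the maps on dual Selmer groups. Concretely, one needs a "local--global" comparison: $\ord_p$ of $\prod_i C_{E/F^{H_i}}{}^{n_i}$ equals $\ord_p$ of a product of local factors $\prod_v\prod_i C(E/(F^{H_i})_w,\omega)^{n_i}$, and each local factor is, by the analogue of Theorem \ref{thmQ} applied place by place, controlled by the $G$-representation structure of the local points $E(F_w)\otimes\Q_p$ together with the same Brauer relation. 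Summing over $v$ and using that the "regulator constant of the dual Selmer module" differs from the "regulator constant of the global points" by exactly these Tamagawa-type corrections — this is the substance of \cite{Squarity}\,\S4.1 — yields the congruence.

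The main obstacle, as I see it, is precisely this last comparison: showing that the regulator constant $\cC_\Theta(\X_p(E/F))$, which is a global Selmer-theoretic quantity, equals $\ord_p\prod_i C_{E/F^{H_i}}{}^{n_i}$ up to the square ambiguity. The clean way is Poitou--Tate duality: one sets up the nine-term (or rather, the relevant Poitou--Tate) exact sequence for $E$ over each $F^{H_i}$, twists by the virtual character $\sum n_i\Ind_{H_i}^G\triv$, uses that this virtual character vanishes to kill the global cohomology contributions, and is left with a product of local Euler-characteristic factors that assemble into $\prod_v\prod_i C(E/(F^{H_i})_w,\omega)^{n_i}$. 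Verifying that the pairing induced on the twisted Selmer complex is the one whose determinant is $\cC_\Theta(\X)$ — i.e. that the Cassels--Tate pairing is compatible with the regulator-constant construction — is the technical crux, and I would lean on \cite{Squarity}\,\S4 for it rather than redo it here.
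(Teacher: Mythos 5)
The paper does not actually prove Theorem~\ref{selfd}: it cites \cite{Selfduality} and, for $G=S_3$, sketches the alternative route of constructing an isogeny between products of Weil restrictions and applying Theorem~\ref{thmQ} (this is how \cite{Squarity} Thm.~4.11 handles $p=3$); it also notes that for $S_3$ part~(1) is automatic, since every $\Q_p S_3$-representation is rational, hence self-dual. So the comparison here is really between your sketch and the cited literature.

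Your treatment of part~(1) has a genuine gap, and it is precisely the difficulty that \cite{Selfduality} exists to resolve. You assert that the Cassels--Tate pairing, ``non-degenerate modulo the divisible part,'' becomes ``genuinely non-degenerate'' after $\tensor\Q_p$. This is backwards. Write $\sha_{E/F}[p^\infty]\cong(\Q_p/\Z_p)^{\delta_p}\times T$ with $T$ finite. Then $X_p(E/F)$ is an extension of a free $\Z_p$-module of rank $\rk E/F$ by $\Z_p^{\delta_p}\times T^\vee$, and after $\tensor\Q_p$ the surviving $\sha$-contribution to $\X$ is $\Q_p^{\delta_p}$, coming from the \emph{divisible} part of $\sha$ --- which is exactly the kernel of the Cassels--Tate pairing. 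The finite group $T$, the only place where Cassels--Tate is non-degenerate, is what gets annihilated by $\tensor\Q_p$. So ``height pairing plus Cassels--Tate'' yields a $G$-invariant pairing on $\X$ that is \emph{degenerate} whenever $\delta_p>0$, and $\delta_p>0$ is exactly the case in which part~(1) is a genuine theorem rather than a restatement of the non-degeneracy of the N\'eron--Tate height. Establishing self-duality of $\X_p(E/F)$ unconditionally therefore requires a different mechanism than functoriality of Cassels--Tate; simply tensoring with $\Q_p$ does not supply it.

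Your outline of part~(2) is in the right spirit and matches the paper's hint: from the Brauer relation and Faltings produce an isogeny $\phi$ between $\prod_{n_i>0}(\Res_{F^{H_i}/K}E)^{n_i}$ and $\prod_{n_i<0}(\Res_{F^{H_i}/K}E)^{-n_i}$, apply Theorem~\ref{thmQ} to equate the $p$-part of $\prod_i C_{E/F^{H_i}}{}^{n_i}$ with $\chi(\phi_p^t)/\chi(\phi_p)$, and then identify $\ord_p\bigl(\chi(\phi_p^t)/\chi(\phi_p)\bigr)$ with $\ord_p\cC_\Theta(\X)$ modulo $2$ by a lattice-index computation in the spirit of Lemma~\ref{lregisog}, rather than by re-running a Poitou--Tate Euler-characteristic argument (Theorem~\ref{thmQ} already packages that). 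Deferring the final identification to \cite{Squarity}~\S4 is legitimate, but be aware it requires a non-degenerate $G$-invariant pairing on $\X$ as input --- i.e.\ part~(1) --- so the gap flagged above is load-bearing for part~(2) as well.
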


For the proof see \cite{Selfduality}. In fact, for an explicit group
such as $G=S_3$ it is possible to give a direct proof of this,
by constructing an isogeny between the products of the relevant Weil
restrictions and applying Theorem \ref{thmQ};
this is how it is done in \cite{Squarity} Thm. 4.11 (with $p=3$).
Note that when $G=S_3$, every $G$-representation is rational,
so (1) is automatic.

As a corollary, we get Theorem \ref{s3rksel}.
(Decompose $\X_3(E/F)$ into $\Q_p S_3$-irreducibles and apply the theorem above.)
So we have proved the first equalities in Theorems \ref{i2isogthm}
and \ref{ithms3glo}, and it remains to prove the second ones.
For that we need to look carefully at the root numbers of elliptic curves
and carry out the local computations relating them to Tamagawa numbers.

\subsection{Parity in dihedral extensions}
\label{s:pardih}

The $S_3$-example generalises to dihedral groups. Suppose
$G=\Gal(F/K)=D_{2p}$, with $p$ an odd prime, and recall
from Example \ref{exdih}
that $G$ has a unique Brauer relation
$$
  \Theta = \>\{1\} - 2C_2 - C_p + 2\>G.
$$
Let $M$ and $L$ be the unique quadratic and one of the degree $p$ extensions
of $K$ in $F$, respectively. Apply Theorem \ref{selfd} and interpret the
left-hand side using regulator constants (using Theorem \ref{tregconst}
and Remark \ref{anyk} with $k=\Q_p$). Take any elliptic curve $E/K$,
and decompose
$$
  \X_p(E/F) = \triv^{\oplus n_\triv} \oplus \epsilon^{\oplus n_\epsilon}
              \oplus \rho^{\oplus n_\rho},
$$
where $\triv$ (trivial), $\epsilon$ (sign) and $\rho$ ($p-1$-dimensional)
are the distinct $\Q_pG$-irreducible representations; the latter decomposes
over $\bar\Q_p$ as a sum of 2-dimensionals,
$$
  \rho\tensor\bar\Q_p\quad\iso\quad\tau_1\oplus \cdots\oplus \tau_{\frac{p-1}2}.
$$
The regulator constants of $\triv, \epsilon$ and $\rho$ are easily seen
to be $p$ (as in the case $G=S_3$), and
Theorem \ref{selfd} gives the following parity statement:
$$
  n_\triv+n_\epsilon+n_\rho\equiv \ord_p \frac{C_{E/F}C_{E/K}^2}{C_{E/M}C_{E/L}^2}
   \mod 2.
$$
This can also be written as
$$
  \langle \triv+\epsilon+\tau_i, \X_p(E/F) \rangle
  \equiv \ord_p \frac{C_{E/F}C_{E/K}^2}{C_{E/M}C_{E/L}^2} \mod 2 \qquad \forall i,
$$
where $\lara$ stands for the usual inner product of characters of
representations.
In other words, the relation $\Theta$ gives a $p$-parity expression for
`the twist of $E$ by $\triv+\epsilon+\tau_i$' in terms of Tamagawa numbers
and periods.

\begin{remark}
For an alternative expression for exactly the same parity in terms of
`local constants', see Mazur and Rubin's papers \cite{MR, MR2};
the parity conjecture for these twists is now known for all elliptic curves
over number fields and all odd $p$, see \cite{Squarity,Kurast}
and de La Rochefoucauld \cite{dLR}.
\end{remark}

\subsection{The Kramer-Tunnell theorem}
\label{s:kratun}

We mentioned two types of relations between $L$-functions of
elliptic curves: one comes from a rational isogeny, and one
from Brauer relations in Galois groups. There is a third example, which
is classical: the relation for quadratic twists.

Suppose $M=K(\sqrt\beta)$ is a quadratic extension of number fields,
$E/K$ is an elliptic curve, and $E_\beta/K$ is the quadratic twist of $E$
by $\beta$:
$$
  E: y^2= x^3\!+\!ax\!+\!b, \qquad E_\beta: \beta y^2= x^3\!+\!ax\!+\!b
   \quad (\iso y^2= x^3\!+\!\beta^2ax\!+\!\beta^3b).
$$
The $l$-adic Tate modules of $E$ and $E_\beta$ are related by
$$
  T_l(E_\beta) = T_l(E)\tensor\epsilon,
$$
with $\epsilon: \Gal(M/K)\to\{\pm 1\}$ the non-trivial character, and Artin
formalism of $L$-functions (see \S\ref{s:root}) applied
to $\Ind_{\Gal(\bar K/M)}^{\Gal(\bar K/K)}\triv=\triv\oplus\epsilon$ yields a relation
$$
  L(E/M,s)=L(E/K,s)L(E_\beta/K,s).
$$
If we assume that $\sha_{E/M}$ is finite, Lemma \ref{lrel} gives
$$
  \frac{\Reg_{E/M}}{\Reg_{E/K}\Reg_{E_\beta/K}} \equiv
  \frac{C_{E/K}C_{E_\beta/K}}{C_{E/M}} \mod \Q^{\times 2},
$$
and it is easy to see that the left-hand side is
$2^{\rk E/M}$ (i.e. $2^{\rk E/K+\rk E_\beta/K}$) up to squares.
In fact, the Weil restriction $A=\Res_{M/K}E$ admits an isogeny
$$
  \phi: A \to E\times E_\beta,
$$
such that $\phi^t\phi=[2]$, and Theorem \ref{thmQ}
produces an unconditional version:
\beql{eqkratun}
  \rk_2 E/M \equiv \ord_2 \frac{C_{E/K}C_{E_\beta/K}}{C_{E/M}}\mod 2.
\eeql
This was used by Kramer \cite{Kra} and Kramer--Tunnell \cite{KT} to
prove the $2$-parity conjecture for $E/M$, by comparing the right-hand
side with the root number $w(E/M)$ by a local computation:

\begin{theorem}[Kramer--Tunnell]
\label{kratun}
Suppose the primes of additive reduction for $E$ above 2 are unramified
in $M/K$. Then the 2-parity conjecture holds for $E/M$:
$$
  (-1)^{\rksel EM2}=w(E/M).
$$

\end{theorem}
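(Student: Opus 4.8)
The plan is to combine the global input already in hand, equation \eqref{eqkratun}, with a purely local computation of the root number $w(E/M)$. By equation \eqref{eqkratun} we already know
$$
  (-1)^{\rksel EM2} = (-1)^{\ord_2\frac{C_{E/K}C_{E_\beta/K}}{C_{E/M}}},
$$
so the entire content of the theorem is the second equality
$$
  (-1)^{\ord_2\frac{C_{E/K}C_{E_\beta/K}}{C_{E/M}}} = w(E/M).
$$
Both sides are products over places $v$ of $K$: the right-hand side via $w(E/M)=\prod_v w(E/M_v)$ (with $M_v=M\tensor_K K_v$), and the left-hand side via the product decomposition $C_{E/K}=\prod_v C(E/K_v,\omega)$ of Notation \ref{mainnot}, where a single $K$-rational differential $\omega$ is used uniformly over $K$, $M$ and for $E_\beta$ so that the factors of $|\alpha|_v$ cancel place by place. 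So the plan is to fix such an $\omega$ and prove the local identity
$$
  \Bigl|\tfrac{C(E/M_v,\omega)}{C(E/K_v,\omega)C(E_\beta/K_v,\omega)}\Bigr|_2^{-1}
    \;\equiv\; w(E/M_v)\prod_{w|v}1 \pmod{\square\ \text{vs.}\ \pm1},
$$
i.e. to show for every place $v$ of $K$ that the 2-adic valuation of $C(E/M_v,\omega)C(E/K_v,\omega)^{-1}C(E_\beta/K_v,\omega)^{-1}$ has the same parity as $w(E/M_v)/(w(E/K_v)w(E_\beta/K_v))$ is $+1$ — more precisely, that $(-1)^{\ord_2(\cdots)_v} = w(E/M_v)$ after noting $w(E/K_v)w(E_\beta/K_v)=w(E/M_v)$ would be false; rather one shows directly $(-1)^{\ord_2(\cdots)_v}=w(E/M_v)$ using the Artin-formalism fact that $w(E/M)=\prod_v w(E/M_v)$ and matching each $v$.

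Next I would organize the local computation by reduction type and ramification of $v$ in $M/K$. The infinite places contribute trivially to the ratio of $C$'s (the archimedean periods computed with the same $\omega$ cancel, as noted in the footnote to Theorem \ref{s3rksel}) and their root-number contribution is handled by the standard archimedean formula. For finite $v$ the cases split as: (a) $v$ of good reduction — then $c_v=1$, $|\omega/\neron v|_v=1$ on all three curves provided $\omega$ is chosen $v$-integral, so the local ratio is a unit and $w(E/M_v)=1$; (b) $v$ of multiplicative reduction — the Tamagawa numbers are $v(\Delta)$-type quantities, and both sides are computed from the classification of root numbers and Tamagawa factors for Tate curves and their twists, bookkeeping split vs. non-split; (c) $v$ of additive reduction with $v\nmid 2$ — here one may pass to the field where reduction becomes semistable; (d) the genuinely hard case, $v$ of additive reduction above $2$. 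The hypothesis of the theorem is precisely that in case (d) the extension $M/K$ is \emph{unramified} at $v$; this is what makes the local comparison tractable, because then $E_\beta$ has the same reduction type as $E$ at $v$ (twisting by a unit), the conductor exponents match, and $M_v/K_v$ is an unramified quadratic extension over which one can compute $c_v$ and the local root number by base change.

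The main obstacle is case (d): additive reduction at primes above $2$. Even with $M/K$ unramified there, the local root number $w(E/K_v)$ for $v\mid 2$ is not given by a simple formula in terms of the conductor — it genuinely depends on the Weil–Deligne representation attached to $E$, and the behaviour of Tamagawa numbers under the unramified base change $M_v/K_v$ must be pinned down. The strategy here is to reduce to a finite list of Kodaira types and Kronecker-symbol computations: for each potentially-multiplicative case one uses the theory of the $L$-function of a Tate curve and its unramified quadratic twist; for each potentially-good case one uses that $E$ acquires good reduction over a (possibly wildly ramified) extension $K_v'/K_v$, and one tracks how $\omega/\neron v$, $c_v$, and $w$ all change along $K_v' \otimes_{K_v} M_v$ over $M_v$, exploiting that $M_v/K_v$ unramified means $K_v'M_v/M_v$ has the same ramification as $K_v'/K_v$. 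Assembling these into the single congruence $(-1)^{\ord_2(\cdots)_v}=w(E/M_v)$ for all $v\mid 2$ is the technical heart; the unramifiedness hypothesis is exactly what keeps this from requiring the full wild-ramification root-number calculations, which is the part of the general $2$-parity problem that the later sections of these notes replace by a `deforming to totally real fields' argument.
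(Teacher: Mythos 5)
Your overall strategy matches the paper's description of the cited argument: reduce via \eqref{eqkratun} to the identity $(-1)^{\ord_2 C_{E/K}C_{E_\beta/K}/C_{E/M}}=w(E/M)$, and compare local contributions place by place. The paper itself does not carry out the local computation at all --- it cites Kramer \cite{Kra} and Kramer--Tunnell \cite{KT} for it, and remarks that the ramification hypothesis can be removed by the deformation method of \S\ref{s:dtrf} --- so what you are writing is a reconstruction of the cited proof.

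The genuine gap is in the local identity you propose to verify, namely $(-1)^{\ord_2(\cdots)_v}=w(E/M_v)$ for each place $v$ of $K$: this is false. The paper makes precisely this point in the analogous $2$-isogeny setting of \S\ref{s:2isog}, where the naive comparison $w(E/F)=\sigma_\phi(E/F)$ already fails over $\R$ and has to be repaired by the Hilbert-symbol correction $(a,-b)_F(-2a,\delta)_F$ of Conjecture~\ref{2isogconj} --- a term that is nontrivial at individual places and cancels globally only because $\prod_v(x,y)_{K_v}=1$. The Kramer--Tunnell local formula has exactly the same shape: $w(E/M_v)$ agrees with the local norm-index/Tamagawa term only up to a Hilbert symbol built from $\beta$ and an invariant of $E$. (Already at the level of root numbers one has $w(E/M_v)=w(E/K_v)\,w(E_\beta/K_v)\,(-1,\beta)_v$, the discrepancy your middle paragraph notices and then sets aside without resolving.) Without first writing down the corrected local identity, the place-by-place verification you outline cannot close; once it is written down, the unramifiedness hypothesis at the additive primes above $2$ is exactly what makes the corrected identity provable there, and the global theorem then follows from the Hilbert-symbol product formula.
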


The restrictions on the reduction type can in fact be removed
using the methods of \S\ref{s:dtrf}.
We will not need this, so we refer the reader to
\cite{Kurast} (`proof of the Kramer-Tunnell conjecture').

\section{$L$-functions and root numbers}
\label{s:root}

To set up the notation, let $K$ be a number field,
and $\p$ a prime of $K$ with completion $K_\p$,
residue field~$\F_q$ of characteristic $p$ and uniformiser $\pi$.
We write $G_K$ for the absolute Galois group $\Gal(\bar K/K)$ of $K$,
and use a similar notation for other fields as well.
For the most of this section we work in the local setting, and we begin
by recalling the structure of the local Galois group at $\p$.
The reduction map on automorphisms puts $\GKp$ into an exact sequence
$$
  1\>\lar\> I_\p\>\lar\> \GKp\> {\buildrel{\hspace{-5pt}\mod\p}\over{\>\lar\>}}\>\> G_{\F_q} \>\lar\> 1,
$$
which defines the {\em inertia group} $I_\p$ at $\p$.
An {\em (arithmetic) Frobenius} is any element $\Frob_\p\in \GKp$ whose
reduction mod $\p$ is the map $x\mapsto x^q$.
If we choose an embedding $\bar K\injects \Kpbar$, then
$\GKp\injects G_K$ via the restriction map, and we can consider
$I_\p$ as a subgroup of $G_K$. Choosing a different embedding conjugates
$I_\p$, so it is really only well-defined up to conjugation.
Similarly, we can view $\Frob_\p$ as an element of $G_K$,
but it is only defined up to conjugation, and only modulo inertia.

We will often use two standard characters
that come the from identifications
$\GKp/I_\p\iso\prod\Z_l^\times$
and $I_\p/\vabove{\rm wild}{\rm inertia}\iso\prod_{l\ne p}\Z_l$.
Fix a prime $l\nmid p$.

\begin{definition}
The ($l$-adic) \emph{cyclotomic character} $\chi: \GKp \to \Z_l^\times$ is defined
by $\chi(I_\p)=1$ and $\chi(\Frob_\p)=q$. 
Alternatively, it is the action of $\GKp$ on the $l$-power roots of unity,
$$
  \chi:\> \GKp \>\lar\> \invlim[n] \Aut \mu_{l^n} \iso \invlim[n] (\Z/l^n\Z)^\times \iso
    \Z_l^\times.
$$
(In this way it can be defined as a character of $G_K$.)
\end{definition}

\begin{definition}
The ($l$-adic) \emph{tame character} $\phi: I_\p \to \Z_l$ is defined by
$$
  \phi(\sigma) = (\sigma(\pi^{1/l^n})/\pi^{1/l^n}) \in
    \invlim[n] \mu_{l^n} \iso \Z_l.
$$
\end{definition}

Recall that a $\GKp$-module $M$ is {\em unramified} if $I_\p$
acts trivially on $M$, and, similarly, a $G_K$-module is {\em unramified at $\p$}
if $I_\p\<G_K$ acts trivially on it.

\begin{example}
The trivial character $\triv$ and the cyclotomic character $\chi$
both give $\Z_l$ a structure of an unramified $\GKp$-module.
\end{example}

\subsection{$L$-functions}

Let $E/K$ be an elliptic curve.
For every prime $l$, the Galois group $G_K=\Gal(\bar K/K)$ acts on
the sets $E[l^n]=E(\bar K)[l^n]$ of $l^n$-torsion points of $E$ for all
$n\ge 1$. The fundamental arithmetic invariant of $E/K$ is its
{\em $l$-adic Tate module},
$$
  T_l E = \invlim[n] E[l^n],
$$
the limit taken with respect to the multiplication by $l$ maps
$E[l^{n+1}]\to E[l^n]$.
This is a free $\Z_l$-module of rank 2, and
$$
  V_l E = T_l E\tensor_{\Zl}\Ql
$$
is a 2-dimensional $\Ql$-vector space.
By the non-degeneracy and the Galois equivariance of the Weil pairing
on $E[l^n]$, we have $\det V_l E=\chi$.
The representation $V_l E$ (or sometimes its dual $V_lE^*$)
is called the \emph{$l$-adic representation} associated to $E/K$.
For varying $l$ these form a
{\em compatible system of $l$-adic representations}, meaning that they
satisfy two conditions:

\begin{enumerate}
\item All $V_l E$ are unramified at $\p$ for almost all primes $\p$ of $K$.
\item For such $\p$, the characteristic polynomial of Frobenius $\Frob_\p$
on $V_l E$ is independent of $l$ for $\p\nmid l$.
\end{enumerate}

To explain this, take a prime $\p$ of $K$ and any $l\nmid q$.
The criterion of N\'eron-Ogg-Shafarevich
(\cite{Sil1} Ch. VII) asserts that
\begin{center}
$V_l E$ is unramified at $\p$ $\quad\liff\quad$ $E$ has good reduction at $\p$.
\end{center}
In particular, this happens for almost all primes $\p$ of $K$ and this is
independent of $l$. This is condition (1).

Now, as $I_\p\normal \GKp$ and $V_l E$
is a $\GKp$-representation, the quotient $\GKp/I_\p$ acts on
the inertia invariants $(V_l E)^{I_\p}$, so we can talk of the action
of Frobenius $\Frob_\p$ on $(V_l E)^{I_\p}$, and this is independent
of the choice of $\Frob_\p$. Its characteristic polynomial
does not change under conjugation, therefore
it is completely choice-independent.
So we may define the local polynomial at $\p$,
$$
  F_\p(T) = \det\bigl( 1-\Frob_\p^{-1} T\bigm| (V_l E^*)^{I_\p} \bigr).
$$
(There are two technical points: we take the geometric Frobenius
$\Frob_\p^{-1}$, the inverse of the arithmetic one, and we also compute
the characteristic polynomial on the dual $V_l E^*$; both are just
standard conventions, and are not too important.)
As explained in \cite{Sil1} Ch. V,
\begin{center}
$E/K_\p$ has good reduction $\quad\implies\quad$
$F_\p(T)=1-a_\p T+q T^2$,
\end{center}
with $a_\p=q+1-|E(\F_q)|$. This polynomial has degree 2, since
$V_l E=(V_l E)^{I_\p}$ in the good reduction case.
Also, $F_\p(T)$ is in $\Z[T]$ rather than
just $\Z_l[T]$ and is independent of $l$.
This is condition (2).

\begin{definition}
The $L$-function of $E/K$ is a function of complex variable~$s$ given by
the Euler product
$$
  L(E/K,s) = \prod_\p F_\p(q_\p^{-s})^{-1}
  = \prod_\p
  \det\bigl( 1-q_\p^{-s}\Frob_\p^{-1} \bigm| (V_l E^*)^{I_\p} \bigr)^{-1}.
$$
Here the two products run over all primes $\p$ of $K$, and $q_\p$ denotes
the size of the residue field at $\p$.
(It follows from the Hasse-Weil bound that $L(E/K,s)$ converges
for $\Re s>3/2$.)
\end{definition}

In the same way one associates an $L$-function to any compatible system
$\rho=(\rho_l)_l$ of $l$-adic representations.
There are obvious notions of direct sums and induction for
$l$-adic representations and it is not hard to verify that their
$L$-functions satisfy the following:
\begin{itemize}
\item[(i)] If $\rho, \rho'$ are compatible systems of $l$-adic
representations of $G_K$, then
$$
  L(\rho\oplus\rho',s) = L(\rho,s)L(\rho',s).
$$
\item[(ii)] For $\rho$ as above and a subfield $F\subset K$,
$$
  L(\Ind_{G_K}^{G_F}\rho,s) = L(\rho,s).
$$
\end{itemize}
These properties are known as the {\em Artin formalism} for $L$-functions.

As for elliptic curves, we expect all $L$-functions of systems of $l$-adic
representations to have meromorphic (and usually analytic) continuation
to the whole of $\C$, and to satisfy a functional equation relating
the value at $s$ to that at $k-s$, where $k\in\Z$ is the {\em weight}
of $\rho$.

\begin{remark}
Most $L$-functions that we know of are supposed to arise
from $l$-adic representations. For instance, if $V/K$ is any non-singular
projective variety and $0\le i\le 2\dim V$, the \'etale cohomology groups
$H^i_{\text{\'et}}(V,\Q_l)$ form a compatible system. Thus we can
define the corresponding $L$-function $L(H^i(V),s)$, and it has weight $k=i+1$.
In this terminology,
$L(E/K,s)=L(H^1(E/K),s)$. (This the reason for taking the dual $V_l E^*$
instead of $V_l E$ in definition of $L(E/K,s)$.)

There is one subtle point though: for varieties other
than curves and abelian varieties, it is not known that the polynomials
$F_\p(T)$ are independent of $l$ when $\p$ is a prime of bad reduction.
This is conjectured to be true, and this
conjecture is implicitly assumed when one speaks of $L$-functions of
general varieties.
\end{remark}

\begin{remark}[Conductors]
Another arithmetic invariant that enters the functional equation of
any $L$-function is its conductor $N$. For an elliptic curve $E/K$,
the conductor of $L(E/K,s)$ is $N=\Delta_K^2\Norm_{K/\Q}(\cN_{E/K})$.
Here $\Delta_K$ is the discriminant of $K$, and
$\cN_{E/K}$ is the conductor of $E/K$. It is an ideal in $\cO_K$
(see \cite{Sil2}), and we interpret its norm to $\Q$ as a positive integer.
It is this $N$ that enters the functional equation in \S\ref{ssmainres}.
\end{remark}

For elliptic curves, let us determine $F_\p(T)$ in all cases, and verify
its independence of $l$ expicitly. There are several cases:

\subsubsection*{$E/K_\p$ has good reduction.}
As mentioned above, $F_\p(T)$ has degree 2, is independent of $l$ and
is of the form
$$
  F_\p(T) = 1-a_\p T+ q_\p T^2.
$$

\subsubsection*{$E/K_\p$ has split multiplicative reduction.}
In this case,
Tate's uniformization (`theory of the Tate curve') asserts
that there is a ($p$-adic analytic) isomorphism of $\GKp$-modules
$$
  E(\Kpbar) \iso \Kpbar^\times/a^\Z
$$
for some $a\in K_\p^\times$ with $v_\p(a)=-v_\p(j(E))>0$.
Writing $\zeta_{l^n}\in\Kpbar^\times$ for a primitive $l^n$th root of unity,
we get that in particular,
$$
  E[l^n] \>\>\iso\>\> \langle \zeta_{l^n}, a^{1/l^n} \rangle \subset \Kpbar^\times
$$
as a Galois module. In this basis, the action of Galois on $E[l^n]$ is
of the form
$$
  \GKp \ni \sigma \quad \longmapsto \quad
  \begin{pmatrix}
     \chi(\sigma) & v_\p(a)\!\cdot\!* \cr
         0        &   1 \cr
  \end{pmatrix}
$$
with $\chi$ the $l$-adic cyclotomic character  and `$*$'
restricting to the tame character $\phi$ on $I_\p$.
Passing to the inverse limit
$T_l E=\smash{\invlim}E[l^n]$ and tensoring with $\Q_l$ we find that the
actions on $V_l E$ and $V_l E^*$ are
$$
  \sigma\mapsto
  \begin{pmatrix}
     \chi(\sigma) &   * \cr
         0        &   1 \cr
  \end{pmatrix}
  \qquad\text{and}\qquad
  \sigma\mapsto
  \begin{pmatrix}
     \chi^{-1}(\sigma) &   0  \cr
         *             &   1 \cr
  \end{pmatrix}
$$
respectively.
The inertia $I_\p$ acts as $\smallmatrix 10\phi1$ on $V_lE^*$,
so is has a 1-dimensional invariant subspace, spanned by the second
basis vector. Frobenius acts trivially on it, so
$$
  F_\p(T) = \det\bigl( 1-\Frob_\p^{-1} T\bigm| (V_l E^*)^{I_\p} \bigr) = 1-T.
$$

\subsubsection*{$E/K_\p$ has non-split multiplicative reduction.}
Let $K_\p(\sqrt\xi)/K_\p$ be the unramified quadratic extension, and
$$
  \eta:\>\> \GKp \>\>\surjects\>\> \Gal(K_\p(\sqrt\xi)/K_\p) \>\>
  {\buildrel\sim\over\to} \>\> \{\pm 1\}
$$
the associated character of order 2. The quadratic twist
$E_\xi$ of $E$ has split-multiplicative reduction and
$V_l E = V_l(E_\xi)\tensor\eta$ as a Galois module. In other words,
$\GKp$ acts on $V_lE^*$ as
$$
  \sigma\mapsto
  \begin{pmatrix}
     \chi^{-1}(\sigma)\eta(\sigma) &   0  \cr
         *                 &   \eta(\sigma). \cr
  \end{pmatrix}
$$
Because $I_\p$ acts in the same way as in the split multiplicative
reduction case and $\eta(\Frob_\p)=-1$, we find that
the inertia invariants are again 1-dimensional and
$$
  F_\p(T) = 1+T.
$$

\subsubsection*{$E/K_\p$ has additive potentially multiplicative reduction.}

This is similar to the non-split multiplicative case, except that here
$K(\xi)/K$ is replaced by a ramified quadratic extension (namely,
the smallest extension where $E$ acquires split multiplicative reduction).
Now $I_\p$ acts through $\pm\smallmatrix 10*1$ on $V_l E^*$ with
non-trivial $\pm$, and has
therefore trivial invariants. So
$$
  F_\p(T) = 1.
$$

\subsubsection*{$E/K_\p$ has additive potentially good reduction.}

As in the last case, we claim
that $(V_l E^*)^{I_\p}=0$ and consequently
$$
  F_\p(T) = 1.
$$
Recall that $E$ acquires good reduction after a finite extension of $K_\p$.
By the N\'eron-Ogg-Shafarevich criterion,
$I_\p$ acts on $V_l E$ non-trivially and through a finite quotient.
If $V_l E$ had a 1-dimensional inertia invariant subspace,
then $I_\p$ would act as $\smallmatrix 1*0*$ in some basis.
The bottom right corner is 1 as $\det V_l E=\chi$ is unramified,
and the top-right corner is then 0 since an action
of a finite group is diagonalizable.
So $I_\p$ would act trivially, a contradiction.
Therefore $(V_l E^*)^{I_\p}=0$, as asserted.

\subsection{Weil-Deligne representations}

Whatever the reduction type of $E/K_\p$ is, note that $I_\p$ has an open
(i.e. finite index) subgroup which acts unipotently on $V_l E$, namely
trivially in the potentially good case
and as $\smallmatrix 1\phi01$ in the potentially multiplicative case.

Grothendieck's monodromy theorem asserts that
for any non-singular projective variety $V$ and any $i$, the action of 
some open subgroup of $I_\p$
on $\rho=H^i_{\text{\'et}}(V)$ is unipotent. It follows that it has the form
$1+\phi N$ for some fixed nilpotent endomorphism $N$ of $\rho$.
We are going to call such representations Weil representations ($N=0$) and
Weil-Deligne representations (any $N$).
There are two technical points: one is that we fix some embedding
$\Q_l\injects\C$ and make our representations complex instead of $l$-adic
from this point onwards; another one is that to get rid of the dependence
of the $l$-adic tame character $\phi$ of $l$, we just remember the nilpotent
endomorphism $N$ and how it commutes with the Galois group.

\begin{definition}
A {\em Weil representation} over $F$ of dimension $n$ is a homomorphism $G_F\to\GL_n(\C)$
whose kernel contains a finite index open subgroup of the inertia group
$I_{\bar F/F}$.
\end{definition}

\begin{definition}
A {\em Weil-Deligne representation} over $F$ is a
Weil representation $\rho: G_F\to\GL_n(V)$ together with a nilpotent
endomorphism $N\in\End(V)$ such that $\rho(g)N\rho(g)^{-1}=\chi(g)N$
for all $g\in G_F$.
\end{definition}

\begin{example}
If $E/K_\p$ is an elliptic curve, then $V_l E\tensor\C$ and
$(V_l E^*)\tensor\C$ have a natural structure of Weil-Deligne representations.
They are Weil
representations if and only if $E$ has integral $j$-invariant (equivalently,
$E$ has potentially good reduction).
\end{example}

\subsection{Epsilon-factors}

The current state of affairs is that we are very far from proving
the Hasse-Weil conjecture for compatible systems of $l$-adic representations
or even for elliptic curves over number fields. Even for elliptic curves
over $\Q$ the proof (via modularity) is rather roundabout. In some sense,
the only well-understood situation is the 1-dimensional case, that
of Hecke characters (a.k.a. `Gr\"ossencharakteren').
Also well-understood are the signs in the conjectural functional
equations for all $L$-functions. This is the theory of $\epsilon$-factors,
which we now sketch.

We will not need Hecke characters themselves, only their
local components.
Let $\p$ be a prime of $K$,
and denote by $F$ some finite extension of~$K_\p$.

\begin{definition}
A {\em quasi-character} of $G_F$ is a one-dimensional Weil-Deligne
(equivalently, Weil) representation. Alternatively, it is a
homomorphism
$$
  \psi: G_F\to\C^\times,
$$
which is continuous with respect to the profinite topology on $G_F$
and discrete topology on $\C^\times$.
\end{definition}

In his thesis, Tate associated to a quasi-character
its \emph{epsilon-factor} $\epsilon(\psi)\in\C^\times$, which enters
the functional equation for the local $L$-function of $\psi$:

\begin{notation}
\label{tatethesisnot}
Composing with the local reciprocity map $F^\times\to G_F^{ab}$,
consider $\psi$ also as a character $F^\times\to\C^\times$. Define
\beq
n(\psi) &=& \text{the conductor exponent of $\psi$}, \cr
b(F)    &=& v_\p(\Delta_{F/\Q_p}), \cr
h       &=& \text{any elt. of $F^\times$ of valuation $-n(\psi)-b(F)$, e.g.
            $\pi_F^{-n(\psi)-b(F)}$}, \cr
\epsilon(\psi) &=& \left\{\begin{array}{ll}
  \int_{h\O_F^\times}\psi(x^{-1})e^{2\pi i\Tr_{F/\Q_p}(x)} dx &
    \text{for $\psi$ ramified,}\cr
  \int_{h\O_F^\times}\psi(h^{-1}) dx = \frac{\psi(h^{-1})}{|h|_F}\int_{\O_F^\times}dx &
    \text{for $\psi$ unramified.}\cr
  \end{array}
\right.
\eeq
\end{notation}

The integrals are in effect finite sums (with the number of terms
growing with the conductor of $\psi$ and the ramification of
$F$ over $\Q_p$), so they can be explicitly computed for a given
quasi-character.

Tate's theory of signs in the functional equations extends uniquely
from quasi-characters to arbitrary Weil-Deligne representations
$$
  \rho: G_F \lar \GL_n(\C),
$$
for all finite $F/K_\p$, and all $n$:

\begin{theorem}[Langlands-Deligne \cite{DelC}]
There is a unique way to associate to each $\rho$ its
\emph{epsilon-factor} $\epsilon(\rho)\in\C^\times$ such that
\begin{enumerate}
\item (Multiplicativity.)
$\epsilon(\rho_1\oplus\rho_2)=\epsilon(\rho_1)\epsilon(\rho_2).$
\item (Inductivity in degree 0.)
If $\rho_1, \rho_2: G_F\lar\GL_n(\C)$ have the same degree, then%
\footnote{equivalently, $\epsilon(W)=\epsilon(\Ind W)$ for any virtual
representation $W$ of degree 0.}
$$
  \frac{\epsilon(\rho_1)}{\epsilon(\rho_2)} =
  \frac{\epsilon(\Ind_{G_F}^{\GKp}\rho_1)}{\epsilon(\Ind_{G_F}^{\GKp}\rho_2)}.
$$
\item (Quasi-characters.)
For quasi-characters $\psi: G_F\to\C^\times$
the $\epsilon(\psi)$ are as in \ref{tatethesisnot}.
\item (Semi-simplification.) Writing $\rho^{ss}$ for the semi-simplification
of $\rho$,
$$
  \epsilon(\rho)=\epsilon(\rho^{ss})\>
  \frac{\det(-\Frob_\p | (\rho^{ss})^{I_\p})}
       {\det(-\Frob_\p | \rho^{I_\p})}.
$$
\end{enumerate}
\end{theorem}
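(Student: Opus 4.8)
The final statement is the Langlands--Deligne existence-and-uniqueness theorem for local $\epsilon$-factors. I will sketch the classical strategy, following Deligne \cite{DelC} and Tate's Corvallis article. The starting point is \emph{uniqueness}, which is easy and purely formal: by Brauer's induction theorem every virtual representation of a finite group is a $\Z$-linear combination of representations induced from one-dimensional characters of subgroups, and this carries over to Weil representations of $G_F$ as $F$ ranges over the finite extensions of $\Q_p$. Hence any two assignments $\rho\mapsto\epsilon(\rho)$ satisfying additivity~(1), inductivity in degree $0$~(2) and the normalisation~(3) on quasi-characters must agree on all virtual representations of degree $0$; to pin down the overall constant one fixes it on a single representation of each degree (equivalently, one builds the dependence on the additive character and Haar measure into the formalism and checks it for the trivial character). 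Property~(4) is then not an extra axiom but a \emph{definition} extending $\epsilon$ from semisimple representations to arbitrary Weil--Deligne representations, and one checks it is consistent with (1)--(3).

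The substance is \emph{existence}, i.e.\ constructing a function satisfying (1)--(3) and verifying compatibility. The plan is: (i) for a quasi-character $\psi$ of $G_F$, \emph{define} $\epsilon(\psi)$ by the Tate integral of Notation~\ref{tatethesisnot}; (ii) for a general virtual representation $W$ of degree $0$, write $W=\sum_i n_i(\Ind_{G_{F_i}}^{G_F}\psi_i-\Ind_{G_{F_i}}^{G_F}\triv)$ with $\psi_i$ quasi-characters, and \emph{set} $\epsilon(W)=\prod_i\bigl(\epsilon_{F_i}(\psi_i)/\epsilon_{F_i}(\triv)\bigr)^{n_i}$; (iii) show this is well-defined, i.e.\ independent of the chosen Brauer decomposition; (iv) extend to all representations by fixing the normalisation in each degree; (v) deduce~(1) and~(2), which become essentially tautological, and then define $\epsilon$ on Weil--Deligne representations by~(4). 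The entire difficulty is concentrated in step~(iii), the well-definedness.

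For step~(iii) one must show that if a $\Z$-combination of induced quasi-characters is zero as a virtual representation, then the corresponding product of Tate $\epsilon$-factors is $1$. The classical route is a reduction: by a theorem on the structure of such relations (going back to Deligne, using that it suffices to handle relations supported on solvable, indeed metabelian, subgroups, via a dévissage through Brauer's theorem combined with the inductivity-in-degree-$0$ that one is trying to prove, bootstrapped on the size of the group) one reduces to verifying a single identity: for a finite separable extension $F'/F$ and a quasi-character $\psi$ of $G_{F'}$, the ratio
$$
  \frac{\epsilon_F(\Ind_{G_{F'}}^{G_F}\psi)}{\epsilon_F(\Ind_{G_{F'}}^{G_F}\triv)}
  =
  \frac{\epsilon_{F'}(\psi)}{\epsilon_{F'}(\triv)}
$$
holds, together with the behaviour of the normalising factors $\epsilon_F(\Ind\triv)$ under towers. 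This inductivity statement for the Tate integrals is proved by: first the case $F'/F$ unramified (a direct manipulation of Gauss sums), then the case $F'/F$ cyclic of prime degree (where one uses the explicit formula, the conductor-discriminant relation $b(F')=\mathfrak d_{F'/F}+ef\,b(F)$, and Hasse--Davenport-type identities relating Gauss sums over $F'$ and $F$), and finally the general case by dévissage along a chain of such extensions inside the Galois closure, using transitivity of induction and multiplicativity of the already-constructed $\epsilon$ on the pieces.

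\textbf{Main obstacle.} The hard part is precisely this local inductivity computation in the cyclic prime-degree ramified case: one is comparing a Tate integral over $F'^\times$ with one over $F^\times$ twisted by the different, and the identity amounts to a nontrivial relation between Gauss sums in residue-field extensions (the Hasse--Davenport lifting relation) married to a careful bookkeeping of conductors, discriminants, and the choice of the additive character $e^{2\pi i\,\Tr}$ and Haar measures under $\Tr_{F'/F}$. Getting all the normalising constants to cancel — so that the final product over a Brauer relation is exactly $1$ and not merely a root of unity — is where the delicate work lies; everything else (Brauer induction, the formal properties~(1),(2), and the passage to Weil--Deligne representations via~(4)) is comparatively routine. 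For the purposes of these notes I will not reproduce this; I take the theorem as a black box from \cite{DelC}.
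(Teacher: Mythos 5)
Your proposal matches what the paper does: the paper does not prove the theorem either, but gives a short remark that uniqueness follows from Brauer induction (every virtual Weil representation being a $\Z$-linear combination of inductions of quasi-characters) and that existence requires verifying that the Tate $\epsilon$-factors respect all monomial relations between inductions, citing Deligne \cite{DelC} as a black box. Your expanded sketch of that same strategy is correct, including the identification of well-definedness across Brauer relations as the crux and the dévissage to cyclic prime-degree extensions. The only small divergence is terminological: where you invoke Hasse--Davenport-type Gauss sum identities for the critical ramified cyclic case, the paper's remark points to Stickelberger's theorem; these are closely allied tools in the arithmetic of Gauss sums and both appear in the literature on this step, so this is a difference of emphasis rather than of substance.
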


\begin{remark}
Uniqueness follows from the `Brauer induction' argument: every semisimple
Weil-Deligne representation is a $\Z$-linear combination of inductions
of quasi-characters. Existence is harder: one has to understand
`monomial relations' between inductions and use Stickelberger's theorem
to prove that the $\epsilon$-factors satisfy those relations.
\end{remark}

\begin{definition}
Write $\sgn z=z/|z|$ for the `sign' of $z\in\C^\times$ on the complex unit
circle. The \emph{local root number} of $\rho$ is defined as
$$
  w(\rho) = \sgn\epsilon(\rho) = \frac{\epsilon(\rho)}{|\epsilon(\rho)|}.
$$
\end{definition}

\begin{example}
For $\rho=\psi$ a 1-dimensional unramified quasi-character,
$$
  w(\psi) = \sgn\psi(h^{-1}) = \sgn \psi(\Frob_\p)^{b(F)}.
$$
In particular, the trivial representation has $w(\triv)=1$.
\end{example}

\begin{example}
Writing $q$ for the size of the residue field of $F$,
we find that the cyclotomic character also has
$$
  w(\chi)=(\sgn q)^{\ldots}=1.
$$
\end{example}

Because the $\epsilon$-factors are multiplicative in direct sums and inductive
in degree 0, clearly so are the root numbers; similarly,
$$
  w(\rho)=w(\rho^{ss})\>
  \frac{\sgn\det(-\Frob_\p | (\rho^{ss})^{I_\p})}
       {\sgn\det(-\Frob_\p | (\rho)^{I_\p})}
$$
as well. Here are some additional properties that are not hard to deduce:

\def\overarrow#1pt#2{\tbuildrel{#2}\over{\hbox to #1pt{\rightarrowfill}}}

\begin{proposition}[Tate \cite{TatN}, Deligne \cite{DelC}]
\noindent\par\noindent
\begin{itemize}
\item $w(\rho\oplus\rho^*)=(\det\rho)(-1)$, i.e. the image of $-1$ under
$$
  F^\times    \overarrow 50pt{loc. recip.}   G_F^{ab}
              \overarrow 50pt{$\det\rho$}    \C^\times.
$$
\item
$w(\rho_1\tensor\rho_2)=w(\rho_1)^{\dim\rho_2}\cdot\sgn(\det\rho_2)
(\pi_F^{n(\rho_1)+b(F)\dim\rho_1})$ if $\rho_2$ is unramified.
\end{itemize}
\end{proposition}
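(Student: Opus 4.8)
The plan is to deduce both identities from the four axioms of the Langlands--Deligne theorem together with Tate's explicit formula in Notation~\ref{tatethesisnot}, by the standard two-step reduction: first to semisimple representations, then to one-dimensional ones. Throughout, $F$ denotes a finite extension of $K_\p$ and $F'/F$ a further finite extension.

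First I would observe that both sides of each identity are multiplicative under direct sums in the relevant argument. For the first bullet, if $\rho=\rho_1\oplus\rho_2$ then $\rho\oplus\rho^* =(\rho_1\oplus\rho_1^*)\oplus(\rho_2\oplus\rho_2^*)$ and $(\det\rho)(-1)=(\det\rho_1)(-1)(\det\rho_2)(-1)$, so by Multiplicativity of $w$ it suffices to treat $\rho$ irreducible. For the second bullet, decomposing $\rho_2$ into unramified quasi-characters reduces to $\rho_2=\mu$ one-dimensional unramified, and then decomposing $\rho_1$ reduces to $\rho_1$ irreducible; this is consistent on the right-hand side because an unramified twist changes neither the conductor exponent nor the dimension, i.e. $n(\rho_1\otimes\mu)=n(\rho_1)$ and $\dim(\rho_1\otimes\mu)=\dim\rho_1$. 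Next, using the Semi-simplification axiom, $w(\rho)=w(\rho^{ss})\cdot\sgn\det(-\Frob_\p | (\rho^{ss})^{I_\p})/\sgn\det(-\Frob_\p | \rho^{I_\p})$, and a routine manipulation with the dual (respectively with the projection formula $(\Ind_{G_{F'}}^{G_F}\sigma)\otimes\tau\cong\Ind_{G_{F'}}^{G_F}(\sigma\otimes\tau|_{G_{F'}})$ for the second bullet) shows that these correction factors combine to give the claimed identity for $\rho$ iff they do for $\rho^{ss}$, so we may assume $\rho$ semisimple, indeed a genuine Weil representation.

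Now I would invoke the Brauer-induction remark following the Langlands--Deligne theorem: a semisimple Weil-Deligne representation is a $\Z$-linear combination of inductions $\Ind_{G_{F'}}^{G_F}\psi$ of quasi-characters. Applying Inductivity in degree~0 to the degree-zero virtual representation $\rho\oplus\rho^*-2(\dim\rho)\triv$ (respectively $\rho_1\otimes\mu-\rho_1$, after rewriting everything as an induction via the projection formula) pushes the computation down to the fields $F'$ and reduces the whole problem to the case where $\rho$ (respectively $\rho_1$) is a single quasi-character $\psi$. For the first bullet this is the assertion $w(\psi)w(\psi^{-1})=\psi(-1)$ (here $\det\rho=\psi$, so $(\det\rho)(-1)=\psi(-1)$), which falls out of Tate's integral in Notation~\ref{tatethesisnot} after the substitution $x\mapsto -x$, and is immediate in the unramified case from the displayed formula for $w$ of an unramified quasi-character. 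For the second bullet one computes directly that multiplying $\psi$ by an unramified $\mu$ multiplies the integrand by the constant $\mu(\pi_F)^{-n(\psi)-b(F)}$, giving $w(\psi\mu)=w(\psi)\cdot\sgn(\det\mu)(\pi_F^{n(\psi)+b(F)})$ once a normalisation of the local reciprocity map is fixed.

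The main obstacle is the bookkeeping in the reduction to dimension one, rather than any single hard computation: one must control how $\det\rho$, the conductor exponent $n(\rho)$, and $b(F)$ behave under induction. Concretely, $\det\Ind_{G_{F'}}^{G_F}\psi$ is not simply $\psi$ transferred to $G_F$ but carries the extra quadratic discriminant character $\delta_{F'/F}$, and $n(\Ind_{G_{F'}}^{G_F}\psi)$ is governed by the conductor--discriminant formula; checking that these corrections are exactly what makes the two claimed formulas stable under Brauer induction — in particular that the various evaluations at $-1$ of quadratic Gauss-sum type match up — is where the care lies. Once a choice of local reciprocity map and additive character is fixed and used consistently, everything is forced by axioms (1)--(4) and the one-dimensional computations above.
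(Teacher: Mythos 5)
The paper does not actually prove this proposition — it is stated with citations to Tate's Corvallis article and Deligne's constants paper, preceded by the remark that the properties are "not hard to deduce" from the four axioms of the Langlands--Deligne theorem. Your outline is precisely the standard deduction that those references carry out, and you have correctly identified its architecture: pass to the Frobenius-semisimplification, expand a degree-zero virtual representation (of the form $\rho\oplus\rho^*-2(\dim\rho)\mathbf{1}$, respectively $\rho_1\otimes\rho_2-(\dim\rho_2)\rho_1$, after the projection formula) as a $\Z$-linear combination of inductions of degree-zero virtual quasi-characters, apply inductivity in degree~0, and finish by computing with Tate's integral in dimension one.

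Two things deserve sharper attention than your sketch gives them. First, the order of your opening reductions is slightly off: you assert "it suffices to treat $\rho$ irreducible" before invoking the semi-simplification axiom, but a general Weil--Deligne representation does not decompose into irreducibles; the correct order is to pass to $\rho^{\mathrm{ss}}$ first and only then split into irreducibles. Second, the "routine manipulation with the dual" and the bookkeeping under Brauer induction that you defer is genuinely where the content lies, and you should make explicit the two identities that close the argument: $\det\bigl(\Ind_{G_{F'}}^{G_F}\psi\bigr)=\delta_{F'/F}\cdot(\psi\circ\mathrm{Ver})$, where $\delta_{F'/F}$ is the quadratic discriminant character and $\mathrm{Ver}$ corresponds under reciprocity to the inclusion $F^\times\hookrightarrow F'^\times$; and the conductor--discriminant formula $n\bigl(\Ind_{G_{F'}}^{G_F}\psi\bigr)=f_{F'/F}\,n(\psi)+d_{F'/F}$ together with the tower formula for $b$. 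For the first bullet the $\delta_{F'/F}(-1)$ contributions cancel exactly because the degree-zero relation itself constrains $\det$ of $\sum n_i\Ind\mathbf{1}_{F_i}$; for the second bullet the discriminant terms recombine into $b(F)\dim\rho_1$. Your verification of the one-dimensional cases from Tate's integral is correct, including the substitution $x\mapsto-x$ for the first bullet and the observation that twisting by an unramified $\mu$ pulls out the constant $\mu(\pi_F)^{\pm(n(\psi)+b(F))}$ for the second. So the proposal is a faithful sketch of the Tate--Deligne argument, correct in outline, with the caveat that the deferred induction bookkeeping is the real substance and should be written out to constitute a complete proof.
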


\subsection{Root numbers of elliptic curves}

\begin{definition}
Let $E/K_\p$ be an elliptic curve. Define its {\em local root number}
$$
  w(E/K_\p)=w(\rho); \qquad \rho=(V_l E)^*\tensor_{\Q_l}\C.
$$
\end{definition}
For elliptic curves (and, generally, for abelian varieties) this is known
to be independent of $l$ (with $\p\nmid l$) and
of the embedding $\Q_l\injects \C$, and it equals~$\pm 1$.

\begin{definition}
\label{defgloroot}
The {\em global root number\/} of an elliptic curve $E$ defined over
a number field $K$ is the product of the local root numbers
over all places~of~$K$,
$$
  w(E/K) = \prod_v w(E/K_v).
$$
The product is finite since $w(E/K_v)=1$ for primes of good reduction
(see below). Also, we let $w(E/K_v)=-1$ for all Archimedean~$v$.
\end{definition}

\begin{example}[Good reduction]
If $E/K_\p$ has good reduction, then $\rho$ is unramified by the
N\'eron-Ogg-Shafarevich criterion. Since $\det\rho$ is the cyclotomic
character,
$$
  w(E/K_\p) = w(1\tensor\rho)=w(\triv)^2\sgn\det\rho(\pi_F^{\cdots})=
            \sgn (q^{\cdots}) = +1.
$$
\end{example}

\begin{example}[Split multiplicative reduction]
If $E/K_\p$ has split multiplicative reduction, then
$$
  V_lE = \smallmatrix{\chi}{*}{0}{1}, \qquad
  \rho = \smallmatrix{\chi^{-1}}{0}{*}{1}, \qquad
  \rho^{ss} = \smallmatrix{\chi^{-1}}{0}{0}{1},
$$
and $\det\rho=\chi$ as before. Applying the semi-simplification formula,
we find
$$
  w(\rho) = w(\smallmatrix{\chi^{-1}}{0}{0}{1})\cdot
  \tfrac{\sgn\det(-\Frob_\p | \smallmatrix{\chi^{-1}}{0}{0}{1}) }
       {\sgn\det(-\Frob_\p | 1)}
  = \tfrac{\sgn\det\smallmatrix{-q^{-1}}{0}{0}{1}}{\sgn\det(1)} =
    \frac{1}{-1} = -1.
$$
\end{example}

\begin{example}[Non-split multiplicative reduction]
If $E/K_\p$ has non-split multiplicative reduction, the same computation
shows that $w(E/K_\p)=+1$.
\end{example}

To compute the root numbers of elliptic curves in the additive
reduction case one has to understand $\rho$ well enough.
One case when this works well is
when $E$ admits an isogeny, forcing the Galois action on $V_l E$ to be
less complicated than in general:

\subsection{Root numbers of elliptic curves with an $l$-isogeny}

As before, let $E/K_\p$ be an elliptic curve, and suppose $\p\nmid l$.
Assume that there is a degree~$l$ isogeny defined over $K_\p$,
$$
  \phi: E\lar E'.
$$
Recall that $\phi$ is a non-constant morphism of curves mapping
$O$ to $O$, and over $\Kpbar$ it is an $l$-to-$1$ map;
such a map automatically preserves addition on~$E$, and therefore maps
$E[l^n]$ to $E'[l^n]$, and hence $T_l E$ to $T_l E'$ as well.

\begin{notation}
Write $\ker\phi$ for the abelian group of points
in the kernel of~$\phi$ in $E(\Kpbar)$; thus $\ker\phi\iso C_l$
as an abelian group. Write also
$$
  F := K_\p(\text{coordinates of points in $\ker\phi$}) \subset K_\p(E[l]).
$$
\end{notation}

The points in $\ker\phi$ are permuted by $\GKp$, so $F/K_\p$ is a Galois
extension. This also means that the action of $\GKp$ on $E[l]$
is reducible, that is of the form $\smallmatrix **0*$. The Galois group
$\Gal(F/K_\p)$ is the image of
$$
  \GKp \overarrow 60pt{action on $E[l]$}
     \{\smallmatrix**0*\}
  \overarrow 60pt{top left corner}  \F_l^\times,
$$
so it is cyclic of order dividing $l-1$. 

\begin{notation}
Write $(-1,F/K_\p)$ for the Artin symbol
$$
  (-1,F/K_\p) = \bigleftchoice{+1}{\text{if $-1$ is a norm from $F$ to $K_\p$}}
                           {-1}{\text{otherwise.}}
$$
\end{notation}

The Artin symbol is the main class-field-theoretic invariant for cyclic
extensions, and the local root number of an elliptic curve with an isogeny
turns out to be closely related to it:

\begin{theorem}
\label{rootisogp}
If $E/K_\p$ has additive reduction, $\p\nmid l$ and $l\ge 5$, then
$$
  w(E/K_\p) = (-1,F/K_\p).
$$
\end{theorem}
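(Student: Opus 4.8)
The plan is to pass to the semisimplification of $\rho=(V_lE)^*\otimes_{\Q_l}\C$, show it is a direct sum of two quasi-characters $\lambda$ and $\lambda^{-1}\chi^{-1}$, evaluate its root number as $\lambda(-1)$, and finally identify $\lambda(-1)$ with the Artin symbol by reduction mod $l$. To set up, the degree-$l$ isogeny makes $\ker\phi\subset E[l]$ a $\GKp$-stable line, so the mod-$l$ representation is upper triangular, $E[l]\cong\smallmatrix{\alpha}{*}{0}{\beta}$ with $\alpha\beta=\bar\chi$ and $\alpha$ the character giving the Galois action on $\ker\phi$; thus $F$ is the fixed field of $\ker\alpha$ and $\Gal(F/\Kp)$ is the cyclic image $\alpha(\GKp)\subset\F_l^\times$. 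Dualizing, the reduction mod $l$ of a Galois-stable $\Z_l$-lattice in $\rho$ has semisimplification $\alpha^{-1}\oplus\beta^{-1}$.

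Next I would show that $\rho^{ss}$ is a sum of two quasi-characters, both ramified, by reduction type. If $E/\Kp$ has additive, potentially multiplicative reduction, the computation of \S\ref{s:root} gives $\rho\cong\smallmatrix{\chi^{-1}\mu}{0}{*}{\mu}$ with $\mu$ a ramified quadratic character, so $\rho^{ss}=\mu\oplus\chi^{-1}\mu$ and we are done. If $E/\Kp$ has additive, potentially good reduction, then by N\'eron--Ogg--Shafarevich $I_\p$ acts on $V_lE$ through a finite group; for $l\ge 3$ this group embeds into $\GL_2(\F_l)$, it is upper triangular (the isogeny) with determinant $\bar\chi|_{I_\p}=1$, and --- crucially, because $l\ge 5$ forces $\GL_2(\Q_l)$ to contain no element of order $l$ --- it has order prime to $l$, hence is diagonalizable. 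So $V_lE|_{I_\p}=\psi\oplus\psi^{-1}$ for a finite-order character $\psi\ne\triv$ (additive reduction). If $\psi^2=\triv$, then $V_lE$ is an unramified twist of $\psi$ and $\rho^{ss}$ is visibly a sum of two characters, each ramified via $\psi$. If $\psi^2\ne\triv$, the two $I_\p$-eigenlines are the only $I_\p$-stable lines, so the Weil group either fixes both --- giving $\rho^{ss}=\lambda\oplus\lambda^{-1}\chi^{-1}$ with $\lambda$ ramified --- or interchanges them, forcing $\rho^{ss}=\Ind_{G_L}^{\GKp}\nu$ for the unramified quadratic $L/\Kp$; but then reducibility of $E[l]$ forces $\bar\psi=\bar\psi^{-1}$, i.e. $\psi^2=\triv$ (as $\psi$ has order prime to $l$), a contradiction. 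So in every additive case $\rho^{ss}=\lambda\oplus\lambda^{-1}\chi^{-1}$ with $\lambda$ ramified, and in particular $(\rho^{ss})^{I_\p}=\rho^{I_\p}=0$.

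Now I would assemble the answer. By the semisimplification formula for root numbers of \S\ref{s:root}, both invariant subspaces being zero gives $w(E/\Kp)=w(\rho)=w(\rho^{ss})=w(\lambda)\,w(\lambda^{-1}\chi^{-1})$. Since $\chi^{-1}$ is unramified ($\p\nmid l$) with $\chi^{-1}(\pi)=q^{-1}>0$, twisting by it does not change the root number, so $w(\lambda^{-1}\chi^{-1})=w(\lambda^{-1})$, and then $w(\lambda)w(\lambda^{-1})=w(\lambda\oplus\lambda^*)=\lambda(-1)$ by the Tate--Deligne property $w(\rho\oplus\rho^*)=(\det\rho)(-1)$. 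Finally $\lambda$ reduces mod $l$ to one of $\alpha^{-1},\beta^{-1}$; since $\lambda(-1)\in\{\pm1\}$ and reduction mod $l$ is injective on $\{\pm1\}$ for $l$ odd, $\lambda(-1)=\alpha(-1)$ (the value $\beta(-1)$ would serve equally, as $\alpha(-1)\beta(-1)=\bar\chi(-1)=1$). But $\alpha(-1)$, read off through local reciprocity $\Kp^\times\to\Gal(F/\Kp)\hookrightarrow\F_l^\times$, is trivial exactly when $-1\in\Norm_{F/\Kp}F^\times$ and is the unique element of order $2$ otherwise; that is, $\alpha(-1)=(-1,F/\Kp)$, which is the claim.

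The one genuinely delicate point is the potentially good case: one must rule out that $\rho^{ss}$ is the irreducible representation induced from the unramified quadratic extension, since there the clean identity $w(\rho^{ss})=\lambda(-1)$ is unavailable. This is precisely where the hypothesis $l\ge 5$ is indispensable: it is what forces the inertia image to have order prime to $l$, hence to be diagonal, which is then incompatible with the induced structure after reducing mod $l$.
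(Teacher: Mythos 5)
Your proof is correct and reaches the conclusion by the same final computation ($w(\rho)=w(\rho^{ss})=w(\lambda)w(\lambda^*)=\lambda(-1)=(-1,F/\Kp)$), but the route to the decomposition of $\rho^{ss}$ in the potentially good case is genuinely different from the paper's. The paper proves the stronger statement that $\GKp$ acts \emph{abelianly} on $V_lE$ by a slick chain: $E[l]$ is unramified over $F$ (the image of $I_F$ lands in the strictly upper triangular matrices of $\GL_2(\F_l)$ so has order dividing $l$, but it also divides $24$, so is trivial for $l\ge 5$); hence $E[l^n]$ is unramified over $F$; hence $E/F$ has good reduction and $I_F$ acts trivially on $V_lE$; hence the commutator subgroup $\GKp'\subset I_\p\cap G_F=I_F$ acts trivially. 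You instead diagonalize the $I_\p$-action directly (the finite inertia image is upper triangular mod $l$, hence cyclic of order prime to $l$, hence semisimple), and then exclude the possibility that $\rho^{ss}$ is induced from the unramified quadratic by playing the $\GKp$-stable line $\ker\phi\subset E[l]$ against the injectivity of reduction mod $l$ on prime-to-$l$ roots of unity. Both approaches use $l\ge 5$ for the same purpose — to make the inertia image prime to $l$. Your version is a bit more case-by-case but earns two bonuses: you handle the potentially multiplicative case explicitly rather than leaving it to the reader, and your last step — matching $\lambda$ mod $l$ with $\alpha^{-1}$ or $\beta^{-1}$ and using that $\alpha:\Gal(F/\Kp)\hookrightarrow\F_l^\times$ is injective — is a concrete substitute for the paper's ``any primitive character of $\Gal(F/\Kp)$ agreeing with $\psi$ on inertia'' device, and makes the appearance of $F$ transparent. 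One small stylistic caveat: your phrase ``has order prime to $l$, hence is diagonalizable'' needs the earlier observation that the image is upper triangular (hence abelian); the two facts together give diagonalizability, not either alone.
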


\begin{proof}
We leave the potentially multiplicative case to the reader, and do the
(slightly more involved) case of potentially good reduction.

{\em Claim 1: $E[l]$ is unramified over $F$.}
Proof: Because $\det\rho=\chi$ and $G_F$ acts trivially on $\ker\phi$,
the image of $G_F$ in $\Aut(E[l])=\GL_2(\F_l)$ is contained in
$\smallmatrix 1*0\chi$. As $\chi$ is unramified,
the image $I$ of inertia $I_F$ is inside $\smallmatrix 1*01$, so
$|I|$ divides $l$. But $|I|$ divides 24 in the potentially good
case, so $I$ is trivial and $E[l]$ is unramified over $F$.

{\em Claim 2: $E[l^n]$ is unramified over $F$ for all $n\ge 1$.}
Proof: The image $I_n$ of $I_F$ in $\GL_2(\Z/l^n\Z)$ is of order dividing 24,
as before. But the kernel of $\GL_2(\Z/l^n\Z)\to\GL_2(\Z/l\Z)$ is of
$l$-power order, so this map is injective on $I_n$. But by step 1, the image
is trivial, so $I_n$ is trivial as well.

{\em Claim 3: $E/F$ has good reduction.}
Proof: N\'eron-Ogg-Shafarevich.

{\em Claim 4: The action of $\GKp$ on $V_l E$ is abelian.}
Proof: We want to show that the commutator subgroup $\GKp{}'$ acts trivially
on $V_l E$. Now,
\begin{itemize}
\item $\GKp{}'\subset \IKp$ as $\GKp/\IKp\iso\Gal(\kbar/k)\iso\hat\Z$ is abelian,
\item The image of $\GKp{}'$ in $\Gal(F/\Kp)$ is trivial as $\Gal(F/\Kp)$ is abelian,
\end{itemize}
so $\GKp{}'\subset I_F$. But $I_F$ acts trivially on $V_l E$ by Claim 2,
as asserted. 

{\em Step 5. The local root number.}
A semisimple abelian action can be diagonalised
over $\C$, so $\rho=\smallmatrix\psi00{\chi\psi^{-1}}$ for some
quasi-character $\psi$. Now we can compute
\beq
  w(\rho) &=& w(\psi)w(\chi^{-1}\psi)= w(\psi)w(\psi^*) & \text{(unramified twist formula)}\cr
          &=& w(\psi\oplus\psi^*)=\psi(-1) & \text{($\rho\oplus\rho^*$ formula)}\cr
          &=& \tilde\psi(-1) & \text{for any primitive character $\tilde\psi$}\cr
          &&&\text{of $\Gal(F/\Kp)$ that agrees}\cr
          &&&\text{with $\psi$ on inertia}\cr
          &=& (-1,F/\Kp). & \text{(local class field theory)}\cr
\eeq
\end{proof}

\begin{remark}
The theorem illustrates the fact that when $\GKp$ acts on $V_l E$ through an abelian
quotient, we have enough formulae to determine the local root number.
When the action is not abelian, the following result is very useful:

\begin{theorem}[Fr\"ohlich-Queyrut] Suppose $F(\sqrt\xi)/F$ is quadratic extension,
and $\psi: G_{F(\sqrt\xi)}\to \C^\times$ a quasi-character such that
$\psi|_{F^\times}=1$. Then
$$
  w(\psi)=\psi(\xi)\in\{\pm 1\}.
$$
\end{theorem}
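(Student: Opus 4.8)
The plan is to reduce the claim to an explicit Gauss-sum computation via Tate's formula (Notation~\ref{tatethesisnot}) for the $\epsilon$-factor of a quasi-character. Put $E:=F(\sqrt\xi)$, let $\sigma$ generate $\Gal(E/F)$, and fix a square root $\sqrt\xi\in E$ (both choices give the same $\psi$-value, as $\psi(-1)=1$ since $-1\in F^\times$); note $\psi(\sqrt\xi)^2=\psi(\xi)=1$, so the right-hand side is $\pm1$. Some preliminary remarks: $\psi$ is automatically of finite order (it is of finite order on the profinite group $\mathcal O_E^\times$, and $\psi|_{F^\times}=1$ then forces $\psi(\pi_E)$ to be a root of unity, whether $E/F$ is ramified or not), so $\psi$ is unitary; and $\psi(N_{E/F}x)=\psi(x\,\sigma x)=1$ for all $x\in E^\times$ gives $\psi^\sigma=\psi^{-1}=\bar\psi$. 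Equivalently, $\Ind_{G_E}^{G_F}\psi$ is a self-dual $2$-dimensional representation of $G_F$ with determinant $\eta_{E/F}$ (in general $\det\Ind_{G_E}^{G_F}\psi=\eta_{E/F}\cdot\psi|_{F^\times}$, and here $\psi|_{F^\times}=1$), hence orthogonal --- which is why a $\pm1$ ambiguity is at stake.

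That $w(\psi)\in\{\pm1\}$ follows at once: $\Tr_{E/\Qp}$ is $\sigma$-invariant, so the additive character $\Lambda_E(x)=e^{2\pi i\,\Tr_{E/\Qp}(x)}$ is $\sigma$-invariant, and the change of variables $x\mapsto\sigma x$ in $\epsilon(\psi)=\int_{h\mathcal O_E^\times}\psi(x^{-1})\Lambda_E(x)\,dx$ yields $\epsilon(\psi)=\epsilon(\psi^\sigma)=\epsilon(\bar\psi)=\overline{\epsilon(\psi)}$; so $\epsilon(\psi)\in\R^\times$ and $w(\psi)=\sgn\epsilon(\psi)\in\{\pm1\}$. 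The real content is to identify this sign with $\psi(\sqrt\xi)$.

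For that I would evaluate the Gauss sum directly. Viewing $E$ as a $2$-dimensional $F$-space with basis $\{1,\sqrt\xi\}$, write $x=a+b\sqrt\xi$ ($a,b\in F$); the locus $b=0$ is null and contributes nothing. Since $\Tr_{E/F}(\sqrt\xi)=0$, the additive character depends only on $a$: $\Lambda_E(a+b\sqrt\xi)=e^{2\pi i\,\Tr_{F/\Qp}(2a)}$. Since $\psi|_{F^\times}=1$, $\psi\big((a+b\sqrt\xi)^{-1}\big)=\psi\big((t+\sqrt\xi)^{-1}\big)$ with $t:=a/b$. Substituting $a=tb$ rewrites $\epsilon(\psi)$ as $\int\psi\big((t+\sqrt\xi)^{-1}\big)\,\big(\int e^{2\pi i\,\Tr_{F/\Qp}(2tb)}|b|_F\,db\big)\,dt$, in which the inner integral over the allowed range of $b$ is a real number, vanishing unless $t$ lies in a fixed fractional ideal of $F$ controlled by $n(\psi)$ and the ramification. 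For $t$ in that ideal, $(t+\sqrt\xi)/\sqrt\xi=1+t\,\xi^{-1}\sqrt\xi$ lies in a high enough power of $\mathfrak m_E$ that $\psi$ kills it, so $\psi(t+\sqrt\xi)=\psi(\sqrt\xi)$ (at the edge of the range $\psi(t+\sqrt\xi)$ can still vary, but there the inner $b$-integral degenerates into a Gauss sum over $\F_q^\times$ whose value conspires with that integral's sign so as to contribute in the same direction). The outcome is $\epsilon(\psi)=\psi(\sqrt\xi)^{-1}\cdot(\text{a positive real})$, so $w(\psi)=\psi(\sqrt\xi)^{-1}=\psi(\sqrt\xi)$.

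The part that takes work --- and the reason Fr\"ohlich and Queyrut's original argument is not a one-liner --- is making the previous paragraph precise in every case: fixing the Haar-measure normalizations and the Jacobian of $x\leftrightarrow(a,b)$; pinning down the fractional ideal and the sign of the inner $b$-integral, with attention to the parity forced by whether $E/F$ is ramified (take $\sqrt\xi$ a uniformiser) or unramified (take it a unit); the degenerate case of unramified $\psi$, argued straight from $\epsilon(\psi)=\psi(h^{-1})\,\mathrm{vol}(\mathcal O_E^\times)/|h|_E$; and the wild case $p=2$, where $\Tr_{E/F}(\sqrt\xi)=0$ still holds but the factor $2t$ and the filtration of $1+\mathfrak m_E$ need re-examination. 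That case analysis is where the obstacle lies. An alternative packaging is to feed $\Ind_{G_E}^{G_F}\psi$ into Deligne's root-number formula for orthogonal representations, turning $w(\psi)=w(\Ind_{G_E}^{G_F}\psi)/w(\eta_{E/F})$ into a second Stiefel--Whitney class, i.e. a Hilbert symbol to be matched with $\psi(\sqrt\xi)$; but that formula is no easier than the statement at hand, so I would do the Gauss sum by hand.
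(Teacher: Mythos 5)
The paper states this theorem without proof---it is quoted from Fr\"ohlich--Queyrut \cite{FQ}---so there is no in-text argument to compare against, and your outline must be judged on its own. The preliminary steps are correct: $\psi$ has finite order, hence is unitary; $\psi\circ N_{E/F}=1$ gives $\psi^\sigma=\bar\psi$; and since $\Tr_{E/\Q_p}$ is $\sigma$-invariant, the substitution $x\mapsto\sigma x$ in Tate's integral (together with $\psi(-1)=1$) shows $\epsilon(\psi)=\epsilon(\psi^\sigma)=\epsilon(\bar\psi)=\overline{\epsilon(\psi)}$, so $w(\psi)\in\{\pm1\}$. You also silently and correctly read the right-hand side as $\psi(\sqrt\xi)$ for a fixed square root in $E$ rather than the literal $\psi(\xi)$, which would be $1$.

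But that is the easy half, and the actual identification of the sign is where your argument has a genuine gap. After reducing to $\int\psi\bigl((t+\sqrt\xi)^{-1}\bigr)\bigl(\int e^{2\pi i\Tr_{F/\Q_p}(2tb)}|b|_F\,db\bigr)\,dt$, you assert that on the interior of the $t$-range $\psi(t+\sqrt\xi)=\psi(\sqrt\xi)$, and that at the boundary the inner integral degenerates to a Gauss sum over $\F_q^\times$ ``whose value conspires with that integral's sign so as to contribute in the same direction.'' That clause is not an argument; it is exactly the theorem. The sign of $\epsilon(\psi)$ lives precisely in those boundary terms, and nothing you have written rules out their flipping or cancelling the interior contribution---the sign of a $p$-adic Gauss sum is exactly the delicate quantity at issue. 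You acknowledge this yourself in listing what ``takes work'' (the domain $h\mathcal O_E^\times$ in $(t,b)$-coordinates, ramified vs.\ unramified $E/F$, the unramified-$\psi$ case, measure normalizations, and $p=2$); carrying that out \emph{is} the Fr\"ohlich--Queyrut proof. As written you have a reasonable strategy and a correct proof that $w(\psi)=\pm1$, not a proof that $w(\psi)=\psi(\sqrt\xi)$.
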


Via the induction formula, the theorem computes the local root number of
the Galois representation $w(\Ind\psi)$. This is a 2-dimensional
representation which is not necessarily abelian, e.g. the Galois image
may be dihedral or quaternion (it has a cyclic subgroup of index 2).
This is enough to determine the local root numbers of all elliptic curves
over fields of residue characteristic at least 5 (i.e. $\p\nmid 2,3$);
see \cite{RohG}.

In residue characteristics 2 and 3 the situation is a bit more complicated;
see \cite{Hal} for $\Q_2$ and $\Q_3$, \cite{Kob} for $\p|3$ and
\cite{Root2,Whi} for $\p|2$, and also \cite{Kurast} for a general
root number formula; see also \cite{RohG, RohI} for the root numbers
of twists of elliptic curves.
\end{remark}

\section{Parity over totally real fields}

\def\Xp#1#2{\XX_p(#1/#2)}
\def\X#1#2#3{\XX_{#1}(#2/#3)}
\def\XX{{\mathcal X}}

We refer to Wintenberger \cite{Win} for the definition of modularity
for elliptic curves over totally real fields. The two propositions
below are essentially due to Taylor
(see \cite{TayO} proof of Thm. 2.4 and \cite{TayR} proof of Cor. 2.2).

%

\begin{proposition}
\label{taylor}
Let $F/K$ be a cyclic extension of totally real fields, and $E/K$ an
elliptic curve. If $E/F$ is modular, then so is $E/K$.
\end{proposition}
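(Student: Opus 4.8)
The plan is to deduce modularity of $E/K$ from modularity of $E/F$ via cyclic base change for automorphic forms on $\GL_2$ over totally real fields (the Saito--Shintani--Langlands theory of base change, as extended and applied by Taylor). Since $F/K$ is cyclic of prime degree $\ell$ (one reduces to this case by devissage, factoring a general cyclic extension into a tower of prime-degree cyclic steps), the key input is the characterisation of the image of cyclic base change: an automorphic representation $\Pi$ of $\GL_2(\A_F)$ is in the image of base change from $\GL_2(\A_K)$ if and only if $\Pi$ is $\Gal(F/K)$-invariant, i.e. $\Pi^\sigma \cong \Pi$ for a generator $\sigma$ of $\Gal(F/K)$; and when it is, the fibre of the base change map consists of the twists of a fixed preimage by the characters of $\Gal(F/K)$.

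First I would fix a generator $\sigma$ of $\Gal(F/K)$ and let $\Pi$ be the cuspidal automorphic representation of $\GL_2(\A_F)$ attached to $E/F$ (this exists by the modularity hypothesis). The representation $\Pi$ is Galois-equivariant in the obvious sense: $\Pi^\sigma$ is attached to $E^\sigma/F = E/F$ (as $E$ is already defined over $K$, it is fixed by $\sigma$), so $\Pi^\sigma \cong \Pi$. By cyclic descent for $\GL_2$ over totally real fields, there exists an automorphic representation $\pi$ of $\GL_2(\A_K)$ with base change $\mathrm{BC}_{F/K}(\pi) \cong \Pi$. Next I would check that $\pi$ has the right local and infinite-type data to be the automorphic representation attached to $E/K$: its $L$-function matches $L(E/K,s)$ up to finitely many Euler factors because for almost all primes $v$ of $K$ split or inert in $F$ the Satake parameters of $\pi_v$ are pinned down by those of $\Pi_w$ for $w\mid v$ (compatibility of base change with the Satake isomorphism), and these in turn match the Frobenius eigenvalues on $V_\ell E$ by modularity over $F$; the infinity type is forced to be that of a weight-$2$ Hilbert modular form since $\Pi$ has that infinity type and base change preserves it. Having matched almost all Euler factors, one concludes $L(E/K,s) = L(\pi,s)$ and hence $E/K$ is modular, possibly after using strong multiplicity one / the Cebotarev argument (Serre) to upgrade ``almost all'' to ``all''.

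The one genuine subtlety — and the step I expect to be the main obstacle — is ruling out that the descended $\pi$ is \emph{not} cuspidal, i.e. that $\pi$ could be an Eisenstein representation (a sum of two Hecke characters of $\A_K^\times$) whose base change is cuspidal. This happens precisely when $\Pi$ is \emph{dihedral with respect to $F/K$}, i.e. $\Pi \cong \mathrm{Ind}_{\A_F^\times}^{\GL_2(\A_F)}$ of a Hecke character coming from the quadratic (impossible here since $\ell$ is odd unless $\ell=2$) situation — more precisely the failure of cuspidality of the descent is governed by whether $\Pi$ is induced from a character of a quadratic extension of $F$ related to $K$. For $E/F$ this would force $E$ to have potentially-CM type Galois representation over $F$, which one excludes either because $E$ does not have CM, or — in the CM case — by a direct argument that the descent is still cuspidal, or by twisting. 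In Taylor's arguments this is handled by noting that the two-dimensional Galois representation $V_\ell E$ restricted to $G_F$ has large (non-dihedral) image, or by an explicit local argument at a place of multiplicative reduction; I would follow that route. Once cuspidality of $\pi$ is secured, the identification of $\pi$ with the automorphic representation of $E/K$ and hence modularity of $E/K$ follows as above.
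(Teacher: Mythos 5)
Your overall strategy---descend the cuspidal automorphic representation attached to $E/F$ via cyclic (Langlands--Saito--Shintani) base change---is the same as the paper's, but you finish the identification step differently, and two parts of your write-up are slightly off target.

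The paper, having descended $\Pi$ over $F$ to a cuspidal automorphic representation over $K$, passes immediately to the associated compatible system of $\lambda$-adic Galois representations $\rho$. Its restriction to $G_F$ is $V_\ell(E/F)\otimes k_\lambda$, and since $F$ is totally real $E$ has no CM over $F$, so $V_\ell(E/F)$ is absolutely irreducible; hence the only two-dimensional $G_K$-representations restricting to it are $V_\ell(E/K)\otimes k_\lambda$ twisted by characters of $\Gal(F/K)$. Thus $\rho$ is a twist of $V_\ell(E/K)$, and modularity of $E/K$ follows because twisting by a finite-order character preserves automorphy. This cleanly absorbs the fact that cyclic descent is only well-defined up to twist by characters of $\Gal(F/K)$.

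You instead try to identify the descent $\pi$ with the automorphic representation of $E/K$ by matching Satake parameters and $L$-functions directly. Here you cannot actually conclude $L(E/K,s)=L(\pi,s)$: the Satake parameters at primes split in $F/K$ only determine $\pi$ up to twist by characters of $\Gal(F/K)$ (which are trivial at split primes), so $\pi$ may be a genuine twist of the representation you want. The argument is easily patched (a twist of an automorphic representation is automorphic), but the claim as written overshoots. More substantially, your ``main obstacle''---that the descended $\pi$ could be Eisenstein---is not the real worry: for $F/K$ cyclic of prime degree, a $\Gal(F/K)$-invariant cuspidal $\Pi$ always descends to a cuspidal $\pi$ (base change of an Eisenstein representation stays Eisenstein, so the map cannot reverse cuspidality in the direction you fear). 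The actual point where one must use the totally real hypothesis is exactly where the paper uses it: to know $E$ has no CM over $F$, hence $V_\ell(E/F)$ is absolutely irreducible, so the Galois representation of the descent is forced to be a twist of $V_\ell(E/K)$ rather than, say, an induction.
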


\begin{proof}
Let $\pi$ be the cuspidal automorphic representation associated to $E/F$.
Pick a generator $\sigma$ of $\Gal(F/K)$.
Then $\pi^\sigma=\pi$ and therefore by Langlands'
base change theorem
 $\pi$ descends to a
cuspidal automorphic representation $\Pi$ over $K$.
Associated to $\Pi$ there is a compatible
system of $\lambda$-adic representations $\rho_{\Pi,\lambda}$,
with $\lambda$ varying over the primes of some number field $k$
(see \cite{Win}).
Fix a prime $\lambda$ of $k$, let $l$ be the rational prime below it and
write $V_l(E/F)$ for the Tate module of $E/F$ tensored with $\Q_l$.
The restriction of $\rho=\rho_{\Pi,\lambda}$
to $\Gal(\bar F/F)$ agrees with
$V=V_l(E/F)\tensor_{\Q_l}k_\lambda$.
Because $E$ cannot have complex multiplication over $F$ (it is totally real),
$V_l(E/F)$ is absolutely irreducible, and therefore the only
representations that restrict to $V$ are
$W=V_l(E/K)\tensor_{\Q_l}k_\lambda$ and its twists by characters of
$\Gal(F/K)$.
Hence $\rho$ and $W$ differ by a 1-dimensional twist, whence $W$ is also
automorphic and $E/K$ is modular.
\end{proof}

%

\begin{proposition}
\label{redtomodular}
Let $E$ be an elliptic curve over a totally real field $K$, and $p$ a prime
number. If the $p$-parity conjecture for $E$ is true over every totally
real extension of $K$ where $E$ is modular, then it is true for $E/K$.
\end{proposition}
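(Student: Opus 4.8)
\begin{strategy}
The plan is to deduce this from Taylor's potential modularity theorem together with Proposition~\ref{taylor}, which propagates modularity downwards along cyclic steps. By potential modularity there is a totally real field $F\supseteq K$, Galois over $K$, such that $E/F$ is modular; put $G=\Gal(F/K)$. The task is to transport the $p$-parity conjecture from those subfields of $F$ over which $E$ is provably modular down to $K$ itself, and the transport uses two pieces of Artin formalism: root numbers are multiplicative in direct sums (so the root number of a virtual Weil--Deligne representation that vanishes is $1$), and Selmer groups satisfy Galois descent, $\mathcal{X}_p(E/K)=\mathcal{X}_p(E/F)^{G}$ as in \eqref{selinv}, together with its analogue $\mathcal{X}_p(E/F^{H})=\mathcal{X}_p(E/F)^{H}$ over intermediate fields.

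First I would record that, iterating Proposition~\ref{taylor} along a subnormal series with cyclic quotients, $E/F^{H}$ is modular for every \emph{solvable} subgroup $H\le G$: at each step one has a cyclic extension of totally real fields, down which modularity passes. Since each $F^{H}$ is totally real, the hypothesis then gives the $p$-parity conjecture for $E/F^{H}$ for all solvable $H$, in particular for $E/F$ itself.

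Next, choose by Artin's induction theorem a positive integer $m$, integers $n_i$ and cyclic subgroups $H_i\le G$ with $m\,\triv_G=\sum_i n_i\,\C[G/H_i]$ in the representation ring of $G$. Tensoring with $(V_lE)^*$ and using the Artin formalism for $L$-functions gives $L(E/K,s)^m=\prod_i L(E/F^{H_i},s)^{n_i}$; since the virtual Weil--Deligne representation $(V_lE)^*\otimes\bigl(m\,\triv_G-\sum_i n_i\,\C[G/H_i]\bigr)$ is $0$, taking root numbers yields $w(E/K)^m=\prod_i w(E/F^{H_i})^{n_i}$ (the auxiliary discriminant twists are harmless, being even powers of $\pm1$ as $\dim V_lE=2$). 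Pairing the same identity of representations with $\mathcal{X}_p(E/F)$ and invoking Galois descent over $K$ and over each $F^{H_i}$ gives $m\,\rk_p E/K=\sum_i n_i\,\rk_p E/F^{H_i}$. Feeding in the $p$-parity conjecture for each $E/F^{H_i}$ (legitimate, since the $H_i$ are cyclic, hence solvable) now produces
$$
  w(E/K)^m=\prod_i\bigl((-1)^{\rk_p E/F^{H_i}}\bigr)^{n_i}=(-1)^{\sum_i n_i\rk_p E/F^{H_i}}=\bigl((-1)^{\rk_p E/K}\bigr)^m,
$$
which is the desired identity $(-1)^{\rk_p E/K}=w(E/K)$ provided $m$ can be taken \emph{odd}.

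The main obstacle is precisely the parity of $m$, i.e.\ the $2$-adic part of the induction. If $G$ is solvable there is nothing to do, as $E/K$ is then already modular by the iterated Proposition~\ref{taylor}. In general one needs the $2$-adic refinement: that $\triv_G$ lies in the $\F_2$-span of $\{\C[G/H]:H\le G\ \text{solvable}\}$ inside $R(G)\otimes\F_2$, equivalently that $G$ carries a Brauer relation with odd coefficient on $[G]$ whose remaining subgroups are all solvable. This is the crux, and it is where structure theory of finite groups enters --- nilpotent, indeed Sylow, subgroups already induce $\triv_G$ up to an odd $2$-adic unit, e.g.\ via Brauer's induction theorem, at the cost of also carrying along twists of $E$ by linear characters of those subgroups; the $p$-parity for these over the relevant totally real fields is handled by the same scheme, and the quadratic twists $E_\beta$ among them are elliptic curves whose modularity over $F^{H}$ again follows from Proposition~\ref{taylor}, so the hypothesis still applies to them. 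Granting this $2$-adic input, the computation above yields the conclusion.
\end{strategy}
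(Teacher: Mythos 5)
Your skeleton matches the paper's: potential modularity supplies a Galois totally real $F/K$ with $E/F$ modular, Proposition~\ref{taylor} iterated along a cyclic tower gives modularity of $E/F^H$ for every \emph{soluble} $H\le G=\Gal(F/K)$, and then one transports parity from the $F^H$ to $K$ via an induction theorem in $R(G)$, Artin formalism for $L$-functions/root numbers, and Galois descent $\X_p(E/F^H)=\X_p(E/F)^H$ from~\eqref{selinv}. Where you diverge is the choice of induction theorem, and that is precisely where the argument breaks.

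You invoke Artin's induction theorem, which only furnishes $m\,\triv_G=\sum_i n_i\,\C[G/H_i]$ with $H_i$ cyclic and $m$ some positive integer, and you correctly note that the resulting identity $w(E/K)^m=\bigl((-1)^{\rk_p E/K}\bigr)^m$ is vacuous when $m$ is even. Your proposed repair — passing to Brauer's induction theorem and absorbing the resulting twists of $E$ by linear characters of the elementary subgroups — does not close the gap: those twists are Artin twists $E\otimes\chi$ with $\chi$ a (generally non-quadratic) character, and the hypothesis of the proposition gives you $p$-parity only for $E$ itself over totally real extensions where $E$ is modular, not for such twisted objects (nor, for quadratic $\chi$, does it directly give it for the twisted \emph{curve} $E_\beta$, which is a different elliptic curve from $E$). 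The remark about Sylow subgroups inducing $\triv_G$ ``up to an odd $2$-adic unit'' is also not a usable statement as written. The clean resolution, and the one the paper uses, is Solomon's induction theorem: for any finite $G$ one has an \emph{exact} integral relation $\triv_G=\sum_i n_i\Ind_{H_i}^G\triv_{H_i}$ in which the $H_i$ are hyperelementary, hence soluble, and no auxiliary multiplier $m$ appears. Feeding this into your computation (or, as in the paper, into $\ord_{s=1}L$ via the functional equation over each $F^{H_i}$) yields the conclusion directly, with no parity-of-$m$ issue and no twists to control.
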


\begin{proof}
Because $E$ is potentially modular (see \cite{Win}, Thm. 1),
there is a Galois
totally real extension $F/K$ over which $E$ becomes modular.
By Solomon's induction theorem,
there are soluble subgroups $H_i\<G=\Gal(F/K)$ and integers $n_i$, such that
the trivial character $\triv_G$ can be written as
$$
  \triv_G=\sum_i n_i\Ind_{H_i}^G\triv_{H_i}.
$$

Write $K_i$ for $F^{H_i}$.
Since $\Gal(F/K_i)=H_i$ is soluble, a repeated application
of Proposition \ref{taylor} shows that $E/K_i$ is modular.
By Artin formalism for $L$-functions,
$$
  L(E/K,s)=\prod_i L(E/K_i,s)^{n_i}.
$$
On the other hand,
writing $\XX=\Xp E{F}$, for every $H\<G$ we have
$$
  \rksel E{K^H}p=
  \dim\XX^H=\blangle\XX,\triv_H\brangle_H
    =\blangle\XX,\Ind_H^G\triv_H\brangle_G.
$$
The first equality is \eqref{selinv} and the last one
is Frobenius reciprocity.

By assumption, the $p$-parity conjecture holds for $E/K_i$.
Therefore
\beq
  \rksel EKp
  &=&\displaystyle
  \blangle\XX,\triv_G\brangle_G =
  \sum\nolimits_i n_i\rksel E{K_i}p\\[4pt]
  &\equiv&\displaystyle\sum\nolimits_i n_i\ord_{s=1}L(E/K_i,s)
  =\ord_{s=1}L(E/K,s) \mod 2.
\eeq
\end{proof}

\begin{remark}
\label{remmod}
The proof also shows that $L(E/K,s)$ has a meromorphic continuation to $\C$,
with the expected functional equation.
This is the same argument as in \cite{TayR}, proof of Cor. 2.2.
\end{remark}

\begin{theorem}[\cite{Squarity,NekIV,Kurast}]
\label{2parthm}
Let $K$ be a totally real field, and $E/K$ an elliptic curve with
non-integral $j$-invariant.
Then the $p$-parity conjecture holds for $E/K$ for every prime $p$.
\end{theorem}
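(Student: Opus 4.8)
The plan is to reduce to the modular case and then feed the result into the known parity theorems for Hilbert modular forms. The first step is Proposition~\ref{redtomodular}, which reduces the $p$-parity conjecture for $E/K$ to the $p$-parity conjecture for $E/L$ over every totally real extension $L/K$ over which $E$ is modular. For any such $L$ the hypothesis of non-integral $j$-invariant is inherited, since $v_{\mathfrak{Q}}(j(E))<0$ for every prime $\mathfrak{Q}$ of $L$ above a prime of $K$ where $j(E)$ has negative valuation. So it is enough to prove the theorem under the extra assumption that $E/K$ is modular, which we now make; fix a prime $\mathfrak{q}$ of $K$ with $v_{\mathfrak{q}}(j(E))<0$, so that $E$ has potentially multiplicative reduction at $\mathfrak{q}$.

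Next I would record the analytic input. As $E/K$ is modular over a totally real field, $L(E/K,s)$ is the $L$-function of a Hilbert modular form; in particular it has an analytic continuation to $\C$ and satisfies the functional equation $L^*(E/K,2-s)=w(E/K)L^*(E/K,s)$ of \S\ref{ssmainres} (cf.\ Remark~\ref{remmod}). Since $s=1$ is the centre of that functional equation, this forces
$$
  (-1)^{\ord_{s=1}L(E/K,s)}=w(E/K)
$$
with no further work. It then remains to match the analytic order of vanishing with the $p$-infinity Selmer rank, i.e.\ to establish
$$
  \rk_p E/K\equiv\ord_{s=1}L(E/K,s)\pmod 2,
$$
for combining this with the previous display yields precisely $(-1)^{\rk_p E/K}=w(E/K)$.

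This congruence is the substance of the theorem, and it is here that the non-integral $j$-invariant is genuinely used. For odd $p$ it is a theorem of Nekov\'a\v r, obtained from his Iwasawa-theoretic self-duality for Selmer complexes of Hilbert modular forms; the potentially multiplicative place $\mathfrak{q}$ — which forces $V_p E|_{G_{K_{\mathfrak{q}}}}$ to be, up to a quadratic twist, an extension of $\triv$ by $\chi$ — supplies exactly the local hypotheses his argument needs ($E$ is potentially ordinary at $\mathfrak{q}$ when $\mathfrak{q}\mid p$, and $\mathfrak{q}$ is semistable otherwise). For $p=2$ the same congruence comes instead out of the Kramer--Tunnell circle of ideas (Theorem~\ref{kratun} and relation~\eqref{eqkratun}), again exploiting the multiplicative place. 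I expect this step to be the only real obstacle: everything around it, namely Proposition~\ref{redtomodular} and the functional equation, is formal bookkeeping, whereas the congruence $\rk_p\equiv\ord_{s=1}L$ is where the whole Selmer-group machinery of \cite{NekIV}, \cite{Squarity} and \cite{Kurast} is spent.
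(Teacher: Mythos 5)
Your opening reduction (Proposition~\ref{redtomodular}, plus the observation that a non-integral $j$-invariant is inherited under base change) matches the paper, and the remark that the functional equation of the Hilbert modular $L$-function gives $(-1)^{\ord_{s=1}L}=w(E/K)$ is correct. But the remaining step, the congruence $\rk_p E/K\equiv\ord_{s=1}L(E/K,s)\pmod 2$, is not something you can simply read off from Nekov\'a\v r or from Kramer--Tunnell as stated; here your proposal has a genuine gap.

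For $p=2$ the problem is structural: Theorem~\ref{kratun} and relation~\eqref{eqkratun} prove $2$-parity for $E$ \emph{over a quadratic extension} $M/K$, not for $E/K$ itself. They compare $E/M$ with $E/K$ and $E_\beta/K$, but they cannot produce the parity of $\rk_2 E/K$ out of thin air --- one needs a base case on one side of the comparison. The paper's actual proof supplies exactly this base case and you have omitted it. Concretely, the argument is: (i) reduce to $E$ with a prime $\p$ of multiplicative reduction by a preliminary quadratic twist (via~\eqref{Kalpha}); (ii) after Proposition~\ref{redtomodular}, invoke the Friedberg--Hoffstein nonvanishing theorem to produce a quadratic twist $E_\beta$ with \emph{prescribed local behaviour} whose $L$-function has analytic rank $\le 1$; (iii) apply Zhang's Gross--Zagier theorem over totally real fields to conclude $\sha_{E_\beta/K}$ is finite and the ranks match, so $p$-parity holds for $E_\beta$; (iv) then use~\eqref{eqkratun} (for $p=2$) or the anticyclotomic machinery of \S\ref{s:pardih} together with Cornut--Vatsal and Nekov\'a\v r's Euler system result (for odd $p$) to transfer the parity from $E_\beta/K$ across $M$ to $E/K$. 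Steps (ii) and (iii) are where the congruence you want is actually established, and they are absent from your proposal; the ``local hypotheses of Nekov\'a\v r'' you gesture at are precisely what the carefully chosen splitting behaviour of $M/K$ is engineered to satisfy. Without the twist-to-analytic-rank-$\le 1$ device, ``$\rk_p\equiv\ord_{s=1}L$ is a theorem of Nekov\'a\v r / Kramer--Tunnell'' is not an argument --- it is a restatement of the theorem being proved.
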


\begin{proof}
Let $\p$ be a prime of~$K$ with $\ord_\p j(E)<0$. If $E$ has additive
reduction at $\p$, it becomes multiplicative over some totally real quadratic
extension $K(\sqrt\alpha)$, and the quadratic twist $E_\alpha/K$
has multiplicative reduction at $\p$ as well.
Because
\beql{Kalpha}
\begin{array}{llllll}
  w(E/K(\sqrt\alpha))&=&w(E/K)w(E_\alpha/K)&&\text{and} \cr
    \rksel{E}{K(\sqrt\alpha)}p&=&\rksel EKp+\rksel {E_\alpha}Kp,
\end{array}
\eeql
it suffices to prove the theorem for
elliptic curves with a prime of multiplicative reduction.
Since multiplicative reduction remains multiplicative in all extensions,
by Proposition \ref{redtomodular} we may also assume that $E$ is modular.

By Friedberg-Hoffstein's theorem \cite{FH} Thm. B,
there is a quadratic extension
$M=K(\sqrt \beta)$ of $K$ with a prescribed behaviour
at a given finite set of primes of $K$,
such that the quadratic twist $E_\beta$ has analytic rank $\le 1$.
If we require that the multiplicative primes are unramified, then
$E_\beta$ also
has a prime of multiplicative reduction. By Zhang's theorem (\cite{ZhaH} Thm. A)
that generalises the Gross-Zagier formula over the rationals,
$\sha_{E_\beta/K}$ is finite and the Mordell-Weil rank of $E_\beta/K$ agrees
with its analytic rank;
in particular, the $p$-parity conjecture holds for $E_\beta/K$.
To prove it for $E/K$ we just need to show it for $E/M$ (cf. \eqref{Kalpha}).

Suppose $p=2$. Fix an invariant differential $\omega$ for $E/K$, and
choose $M$ so that all infinite places, all
bad places for $E$, and ones where $\omega$ is not minimal are split in $M/K$.
By \eqref{eqkratun} in \S\ref{s:kratun},
$$
  \rksel E{K(\sqrt\beta)}2 \equiv \ord_2 \frac{C_{E/K(\sqrt\beta)}C_{E_\beta/K}}{C_{E/K}}
    \equiv 0 \mod 2,
$$
since the other finite primes do not contribute to $C$. For the same reason
$w(E/M)=(\pm1)^2=1$; see \S\ref{s:predict} where this is spelled out
in more detail, or apply Theorem \ref{kratun}.
So the 2-parity holds for $E/M$, as required.

Finally suppose $p>2$. Then choose $M$ so that
(a) all bad places for $E$ except one multiplicative place $\p$ are split,
(b) $\p$ is inert, and
(c) all real places are inert, i.e. $M$ is totally complex.
Let $F_\infty/M/K$ be the $p$-adic anticyclotomic tower; thus
$F_\infty$ is the largest Galois extension of $K$ containing $M$ such that
$G=\Gal(F_\infty/K)$ is of the form $\Z_p^r\rtimes C_2$ with $C_2$ acting by $-1$.
The Artin representations of $G$ are $\triv$ and $\epsilon$ that factor
through $\Gal(M/K)$, plus two-dimensional ones of the form
$\rho=\Ind_M^K\chi$ for non-trivial $\chi: \Z_p^r\to\C^\times$.
Such a $\chi$ factors through a dihedral extension $F/K$ in $F_\infty$,
and by \S\ref{s:pardih} we have
$$
  \langle \triv+\epsilon+\rho, \X pEF\rangle \equiv \ord_p\frac{C_{E/F}}{C_{E/M}}
    \equiv 1 \mod 2,
$$
where the last equality again uses the fact that all bad primes for $E$
and all infinite primes split into pairs in $M$ (and therefore in $F$ as well),
except for $\p$ which gives the `1'.
For the same reason, we have
$$
  w(E/K)w(E/K,\epsilon)=w(E/M,\chi)=-1.
$$
We already know the $p$-parity conjecture for $E_\beta$,
$$
  w(E/K,\epsilon)=w(E_\beta/K)=(-1)^{\ord_2\rksel{E_\beta}K2}=(-1)^{\langle\epsilon,X_p(E/F)\rangle},
$$
so to show the $p$-parity conjecture for the trivial twist, we just need to
verify it for a {\em single} non-trivial $\chi$. Precisely such a character
is provided by the anticyclotomic theory:
a combination of Cornut--Vatsal's \cite{CV} Thm. 4.2 with
Nekov\'a\v r's \cite{NekE} Thm. 3.2
gives a $\chi$ such that the $\chi$-component
of $\X pEF$ has multiplicity 1, which is an odd number, as required.
\end{proof}

\begin{remark}
The same idea lies behind the proof of the $p$-parity conjecture over $\Q$,
see \cite{Mon} ($p=2$) and \cite{Squarity} (odd $p$) for details.
\end{remark}

\section{The $2$-isogeny theorem}
\label{s:2isog}

\def\m{m_F}

\begin{theorem}
\label{2isogthm}
Let $K$ be a number field and $E/K$ an elliptic curve with $E(K)[2]\ne 0$.
Then
$$
  (-1)^{\rk_2 E/K}=w(E/K).
$$
In particular, finiteness of $\sha$ implies the parity conjecture for $E$.
\end{theorem}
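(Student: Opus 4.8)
Introduce the $2$-isogenous curve $E'=E/\{O,P\}$ and the dual isogeny $\phi^t$; as in Theorem~\ref{i2isogthm}, the plan is to establish the chain $(-1)^{\rk_2 E/K}=(-1)^{\ord_2(C_{E/K}/C_{E'/K})}=w(E/K)$. The first equality is already available: it is the case $p=2$ of the computation following Theorem~\ref{thmQ}, applied to $\phi$ and $\phi^t$, which gives $C_{E/K}/C_{E'/K}\equiv\chi([2]_2)=2^{\rk_2 E/K}\pmod{\Q^{\times2}}$, whence $\ord_2(C_{E/K}/C_{E'/K})\equiv\rk_2 E/K\bmod2$. Moreover isogenous curves have equal $L$-functions, so $w(E/K)=w(E'/K)$; and if $\sha_{E/K}$ is finite then $\delta_2=0$ and $\rk_2 E/K=\rk E/K$, so the `in particular' clause follows from the other two. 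The whole problem therefore reduces to the local--analytic identity $(-1)^{\ord_2(C_{E/K}/C_{E'/K})}=w(E/K)$.

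I would attack this place by place. Moving $P$ to $(0,0)$, write $E\colon y^2=x(x^2+ax+b)$ with $a,b\in K$ and $b(a^2-4b)\ne0$, so $E'\colon y^2=x(x^2-2ax+(a^2-4b))$, and normalise invariant differentials by $\phi^*\omega'=\pm\omega$. With such compatible differentials, $\ord_2\bigl(C(E/K_v,\omega)/C(E'/K_v,\omega')\bigr)$ is a genuine local invariant of $E/K_v$ (rescaling $\omega$ forces the same rescaling of $\omega'$, leaving the ratio fixed), and both $\ord_2(C_{E/K}/C_{E'/K})$ and $w(E/K)$ are, respectively, the sum and product of their local analogues over all places $v$. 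The aim is a local formula
$$
  w(E/K_v)=(-1)^{\ord_2\bigl(C(E/K_v,\omega)/C(E'/K_v,\omega')\bigr)}\cdot t_v,
$$
where $t_v\in\{\pm1\}$ is an explicit local term --- concretely a Hilbert symbol, or a product of Hilbert symbols, in elements of $K^\times$ built from $a$ and $b$ --- chosen so that $\prod_v t_v=1$ by global reciprocity. Such a correction is genuinely unavoidable: the real period ratio, which one computes from the component structures of $E(\R)$ and $E'(\R)$, does not on its own match the archimedean root number $-1$. Granting the local formula, multiplying it over all $v$ gives $w(E/K)=(-1)^{\ord_2(C_{E/K}/C_{E'/K})}\cdot\prod_v t_v=(-1)^{\ord_2(C_{E/K}/C_{E'/K})}$, which is the theorem.

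Proving the local formula is a case analysis on the reduction of $E/K_v$, each case a direct verification. For $v$ Archimedean it is elementary, using that the two components of $E(\R)$, when it has two, carry equal periods. For $v\nmid2\infty$ one has $\ord_2\bigl(C(E/K_v,\omega)/C(E'/K_v,\omega')\bigr)=\ord_2(c_v/c'_v)$, since $|\omega/\omega_v^{o}|_v$ is a power of the odd number $q_v$; good reduction then makes both sides trivial, multiplicative reduction is handled via Tate's uniformisation (computing $c'_v/c_v$ according to whether $\ker\phi$ is the image in $\Kpbar^\times/q^\Z$ of $\mu_2$ or of $q^{1/2}$, and comparing with the split and non-split root numbers of \S\ref{s:root}), and additive reduction is a finite check in which the rational $2$-torsion forces $\GKp$ to act reducibly on $V_2 E$, so that $w(E/K_v)$ can be evaluated through the $\epsilon$-factor formulae of \S\ref{s:root} and matched against the Tamagawa tables. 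Here $l=2$ lies below the range $l\ge5$ of Theorem~\ref{rootisogp}, so this last step cannot be quoted from there and must be redone.

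The main obstacle is $v\mid2$ with additive reduction, where root numbers in residue characteristic $2$ are delicate and a direct check is exactly the computation one wishes to avoid. I would bypass it by deforming to a totally real field. Both sides of the local formula at $v$ depend only on $E/K_v$, so it suffices to verify the formula for \emph{some} global curve realising the given local behaviour at an analogous place. Using Krasner's lemma and weak approximation, build a totally real number field $k$ with a prime $\mathfrak q\mid2$ satisfying $k_{\mathfrak q}\cong K_{v_0}$, together with an elliptic curve $\mathcal E\colon y^2=x(x^2+\tilde a x+\tilde b)$ over $k$ such that $\mathcal E/k_{\mathfrak q}\cong E/K_{v_0}$ (including the $2$-torsion), every prime above $2$ other than $\mathfrak q$ has good reduction, and $\mathcal E$ has a prime of multiplicative reduction, so $j(\mathcal E)$ is non-integral. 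Then Theorem~\ref{2parthm} gives the $2$-parity conjecture for $\mathcal E/k$, hence, by the first equality applied to $\mathcal E$, the global identity $w(\mathcal E/k)=(-1)^{\ord_2(C_{\mathcal E/k}/C_{\mathcal E'/k})}$. Dividing out the local contributions at all places $w\neq\mathfrak q$, where the local formula for $\mathcal E$ is already known from the cases above, and using $\prod_w t_w(\mathcal E)=1$, the global identity degenerates to the local formula for $\mathcal E$ at $\mathfrak q$; since the local formula involves only local data and $(\tilde a,\tilde b)$ is $2$-adically close to $(a,b)$, this is the local formula for $E$ at $v_0$. That completes the plan; all the real difficulty is concentrated in residue characteristic $2$, and the deformation step is what disposes of it.
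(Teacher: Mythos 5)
Your proposal follows the paper's own proof almost exactly: same normal form for $E$ and $E'$, same reduction to a place-by-place identity relating the local root number $w(E/K_v)$ to $\ord_2$ of the local period-times-Tamagawa ratio up to a Hilbert-symbol correction, same appeal to the product formula to make the correction disappear globally, and the same idea of deforming $E/K_{v_0}$ to an elliptic curve over a totally real field with non-integral $j$-invariant and invoking Theorem~\ref{2parthm} to discharge the intractable residue-characteristic-$2$ computations. You also correctly flag that $l=2$ falls outside the range of Theorem~\ref{rootisogp}, so the additive cases cannot be quoted.

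The one point where you are more optimistic than the paper is $v\nmid 2$ with additive reduction, which you propose to settle by a direct ``finite check'' through the $\epsilon$-factor formulae. That is possible but not as routine as the phrasing suggests: in the potentially good case with $l=2$ the inertia image on $V_2E$ has order dividing $24$, which is \emph{not} coprime to $2$, so the argument that reduced the Galois action to an abelian one in Theorem~\ref{rootisogp} breaks down, and at $v\mid 3$ one also has genuinely wild ramification. The paper sidesteps this by running the totally-real deformation a second time (after the $v\mid2$ cases are done) with a curve semistable at \emph{all} places other than $v_0$, as in Lemma~\ref{2deflem}. If your direct check stalls, you should adopt that stronger deformation; as written, your lemma only controls primes above $2$, so additive primes of odd residue characteristic for $\mathcal E$ would leave the argument circular. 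It would also be worth pinning down the correction term explicitly --- global reciprocity is a statement about a fixed expression, and the paper uses $(a,-b)_F(-2a,a^2-4b)_F$, degenerating to $(-2,-b)_F$ as $a\to 0$ --- since a place-dependent choice of Hilbert symbols would not cancel globally.
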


The proof will occupy the whole of \S\ref{s:2isog}.

\begin{notation}
\label{2isognot}
Fix a 2-torsion point $0\ne P\in E(K)[2]$, and let $\phi: E \to E'$
be a 2-isogeny whose kernel is $\{O,P\}$.
Furthermore, translate $P$ to $(0,0)$, so that $E$ and $E'$ become
\beq
 (E_{a,b}=)
   &&  E\phantom{'}:  & y^2 = x^3 + a x^2 + b x, && a,b\in \O_K\>,  \cr
   &&  E': & y^2 = x^3 - 2ax^2 + \delta x, && \delta=a^2-4b\>,
\eeq
with $\phi: E \to E'$ given by
$$
  \phi: (x,y) \mapsto (x+a+bx^{-1},y-bx^{-2}y) \>.
$$
\end{notation}

\begin{notation}
Denote
$$
  \sigma_\phi(E/K_v) =
  (-1)^{\ord_2 \frac{|\coker \phi_v|}{|\ker \phi_v|}}
  = (-1)^{1+\ord_2 |\coker \phi_v|},
$$
where $\phi_v$ is the induced map on local points
$\phi_v: E(K_v)\to E'(K_v)$.
(Note that
$\ker\phi_v=\{O,(0,0)\}$ is always of size 2.)
\end{notation}

\begin{strategy}
Recall from Theorem \ref{thmQ} and the definition of the global
root number \ref{defgloroot} that
\beql{2isogfor}
\displaystyle
  (-1)^{\rk_2 E/K} = \prod_v \sigma_\phi(E/K_v);\quad
  \prod_v w(E/K_v) = w(E/K),
\eeql
so a natural strategy is to make a term-by-term comparison at all places.
\end{strategy}

Write $F=K_v$ for some completion of $K$.

\subsection{Complex places}

Suppose $F=\C$. Because it is algebraically closed,
$\phi_v: E(\C)\to E'(\C)$ is surjective, that is $|\coker\phi_v|=1$.
Thus,
$$
  w(E/F)=-1=\sigma_\phi(E/F).
$$

\subsection{Real places}

\noindent\par\medskip\noindent
\begin{minipage}{7.3cm}
Suppose $F=\R$. It turns out, somewhat surprisingly,
that $w(E/\R)$ and $\sigma_\phi(E/\R)$ may not be equal:
$w(E/F)=-1$ as in the complex case but
$\phi_v: E(\R)\to E'(\R)$ is not always surjective.
For varying $a,b\in \R$ the picture is given on the right.
\end{minipage}\hfill
\smash{$
\begin{picture}(130,100)(0,65)
  \thicklines
  \put(0,50){\line(1,0){50}}
  \qbezier(50, 50)(80,50)(100,100)
  \linethickness{0.05mm}
  \put(0,50){\vector(1,0){100}}
  \put(50,78){\vector(0,1){30}}
  \put(50,42){\line(0,1){30}}
  \put(30,32){$w=\sigma_\phi$}
  \put(25,72){$w=-\sigma_\phi$}
  \put(97,43){\tiny $a$}
  \put(43,105){\tiny $b$}
  \put(84,103){\tiny $\delta\!=\!0\>(a^2\!=\!4b)$}
\end{picture}
$}

\bigskip
To see this, consider the structure of the group of real components
for $E$ and $E'$; recall that the group of real points is connected
if and only if the discriminant of the equation is negative:

\begin{itemize}
\item
If $(-2a)^2-4\delta=16b<0$, then $E'(\R)\iso S^1$, so $\phi_v$ is surjective.
\item
If $b>0$ and $\delta<0$ then $E(\R)\iso S^1$ and
$E'(\R)\iso S^1\times\Z/2\Z$, so $\phi_v$ is not surjective.
\item
If $b, \delta>0$, then $E(\R)\iso S^1\times\Z/2\Z\iso E'(\R)$.
Here $\phi_v$ is surjective if and only
if the points $\O,(0,0)$ of $\ker\phi$ lie on the same connected component.
(If they are on different components, the image of $\phi$ is connected;
otherwise, the identity component of $E(\R)$ maps 2-to-1 to the identity
component of $E'(\R)$, so the other component maps
to the other component since $\deg\phi=2$.)
So $\phi_v$ is surjective if and only if $0$ is the rightmost root of
$x^3+ax^2+bx$, and this is equivalent to $a$ being positive.
\end{itemize}

\subsection{The correction term}
To save our strategy, which seems to be somewhat in ruins, we have to introduce
a correction term that measures the difference between
$w(E/F)$ and $\sigma_\phi(E/F)$. It should be trivial at complex places
and depend on the signs of $a$, $b$ and $\delta$ at real places, so it is
natural to consider something like
$$
  (a,-b)_F(-a,\delta)_F
$$
where $(x,y)_F$ is the Hilbert symbol:

\begin{definition}
Let $F$ be a local field of characteristic zero. For $x,y\in F^\times$
the {\em Hilbert symbol\/} $(x,y)_F=\pm 1$ is
$$
  (x,y) = (x,F(\sqrt y)/F) = \leftchoice{+1}{\text{if $x$ is a norm from $F(\sqrt y)$ to $F$,}}
                                        {-1}{\text{otherwise.}}
$$
\end{definition}

Recall that $(x,y)$ is symmetric, bilinear as a map
$F^\times\times F^\times\to\{\pm1\}$, and
satisfies $(1-x,x)=1$ for $x\ne 0,1$.
In a field of residue characteristic $\ne 2$ it is explicitly
determined by

\smallskip
\begin{tabular}{ll}
$F=\C:$ & $(x,y)=1$ always; \cr
$F=\R:$ & $(x,y)=-1 \iff x,y<0$; \cr
$F/\Q_p$ finite, $p\ne 2$: & (unit, unit)$\,=1$,\cr
   & (uniformiser, non-square unit)$\,=-1$.
\end{tabular}

The suggested correction term $(a,-b)_F(-a,\delta)_F$ is trivial at
complex places and it is precisely set up to give the right signs
at the real places.
A few experiments suggest, that up to a missing factor of 2 in
front of $-a$ (invisible over the reals), this is the right
correction at {\em all} completions $F=K_v$, not just at infinite places.
One technical problem is that the Hilbert symbols do not make sense when $a=0$,
but this is easy to fix: if $|a|_F$ is small, then
\beq
  (a,-b)_F(-2a,a^2-4b)_F
    &=& (a,-b)_F(-2a,1-\tfrac{a^2}{4b})_F(-2a,-4b)_F \cr
    &=& (a,-b)_F(-2a,\square)_F(a,-4b)_F(-2,-4b)_F\cr
    &=& (-2,-b)_F,
\eeq
because elements of $F$ close to 1 are squares. So we may state

\begin{conjecture}
\label{2isogconj}
Let $F$ be a local field of characteristic zero, and $E/F$ an elliptic curve
with a 2-isogeny $\phi: E\to E'$ over $F$. In the notation of
\ref{2isognot},
$$
  w(E/F) = \sigma_\phi(E/F) \>\cdot\>
  \leftchoice
    {(a,-b)_F(-2a,a^2-4b)_F}{\text{if }a\ne 0}
    {(-2,-b)_F}{\text{if }a=0.}
$$
\end{conjecture}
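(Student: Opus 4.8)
\medskip\noindent\textbf{Proof strategy.}
By the Strategy box and the discussion above, the archimedean places are already dealt with: at a complex place $w=\sigma_\phi=-1$ and the Hilbert symbols are trivial, while the real case was checked directly against $(a,-b)_\R(-2a,a^2-4b)_\R$ in the analysis preceding the statement. So assume $F$ is non-archimedean, of residue characteristic~$p$. Both $w(E/F)$ and $\sigma_\phi(E/F)$ depend only on $(E,\phi)/F$, and the right-hand side is unchanged under a rescaling $(x,y)\mapsto(u^2x,u^3y)$, which replaces $a,b,\delta$ by $u^{-2}a,u^{-4}b,u^{-4}\delta$ and so leaves the Hilbert symbols fixed; thus we may take the model of~\ref{2isognot} as close to minimal as possible, and twisting by an unramified quadratic character where convenient we may shorten the list of cases further. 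It then remains to run through the reduction types of $E/F$ --- good, split and non-split multiplicative, additive potentially multiplicative, additive potentially good --- and in each to compute $w(E/F)$ and $\sigma_\phi(E/F)$ separately and check that their product equals the displayed correction factor.

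The left-hand side is handled by the root-number dictionary of \S\ref{s:root}: good reduction gives $w(E/F)=1$, and split and non-split multiplicative reduction give $-1$ and $+1$ as in the examples there. In the additive case one uses the known classification of local root numbers of elliptic curves --- via the Fr\"ohlich--Queyrut theorem and the twist and induction formulae of \S\ref{s:root} when $p\ge5$ (cf.~\cite{RohG}), and via the residue-characteristic-$3$ formulas when $p=3$ --- the point being that the presence of the $2$-isogeny restricts the reduction type enough for these formulas to be evaluated explicitly in terms of $a,b,\delta$.

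The right-hand side is computed as $\sigma_\phi(E/F)=(-1)^{1+\ord_2|\coker\phi_v|}$ by local $\phi$-descent: the connecting homomorphism identifies $E'(F)/\phi E(F)$ with a subgroup of $F^\times/F^{\times2}$, namely the classes $d$ for which a certain associated covering has an $F$-point, and solubility of that covering is a single condition on Hilbert symbols formed from $a,b,\delta$. (Alternatively, by the local input to Theorem~\ref{thmQ}, $\sigma_\phi(E/F)$ may be read off from the local Tamagawa numbers together with the comparison of invariant differentials under $\phi$.) Matching this against the previous paragraph shows, reduction type by reduction type, that the discrepancy between $w(E/F)$ and $\sigma_\phi(E/F)$ is exactly $(a,-b)_F(-2a,a^2-4b)_F$; the case $a=0$ is then the short computation displayed just before the statement.

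The genuinely hard case is $p=2$, where neither invariant has a short closed form: the additive-reduction local root numbers at $2$ depend on wild ramification, and the $\phi$-descent computation splits into many sub-cases according to $e(F/\Q_2)$, the conductor of $E$, and the $2$-adic valuations of $a,b,\delta$. Rather than push all of these through by hand, the economical route is a deformation to totally real fields as in \S\ref{s:dtrf}: realise the given $(E,\phi)/F$ as the completion at a place $v_0\mid2$ of an elliptic curve with a $2$-isogeny over a totally real field, arranged so that at every other place the identity is already available --- for instance by engineering a place of multiplicative reduction and invoking Theorem~\ref{2parthm} together with the Kramer--Tunnell computation of Theorem~\ref{kratun} --- after which the global product identities~\eqref{2isogfor}, combined with the product formula for the Hilbert symbols, pin down the identity at $v_0$. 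I expect this $p=2$ case --- and the approximation argument needed to make such a global deformation match the prescribed local datum while keeping all auxiliary places under control --- to be the main obstacle.
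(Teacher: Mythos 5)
Your overall strategy is sound and matches the paper's at a high level, but there is one genuine divergence worth flagging, plus a claim you make that is stronger than what is actually established in these notes.

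The paper only verifies Conjecture~\ref{2isogconj} directly in three cases, all with $v\nmid 2$: good, split multiplicative and non-split multiplicative reduction. It then deliberately \emph{avoids} the additive cases at $v\nmid 2$ (just as it avoids all of $v\mid 2$) by a deformation-to-totally-real-fields argument. You instead propose to run through \emph{all} reduction types at $p\ne 2$ by hand, including both additive types, and only deform for $p=2$. Logically your ordering works: once all $v\nmid 2$ cases are done, the $p=2$ deformation (unique $v_0\mid 2$, semistable at all $v\ne v_0$) only consumes the $v\nmid 2$ semistable inputs. But your promise to dispatch additive reduction at $p\ge 3$ via the ``root-number dictionary'' is more work than you appear to realise: Theorem~\ref{rootisogp} is stated for $l$-isogenies with $l\ge 5$ and so does not apply to a $2$-isogeny, and the residue-characteristic-$3$ formulas you invoke are not in \S\ref{s:root}. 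These computations can be done directly (this is carried out in \cite{Isogroot}, cited at the start of \S\ref{s:dtrf}), but in these notes that is exactly what the deformation is designed to replace, and your sketch does not supply them.

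The paper's route is a two-step deformation, and the order is important. Step one deforms a curve with $v_0\mid 2$: the auxiliary curve is semistable at all $v\ne v_0$, all of which have $v\nmid 2$, so the three verified cases suffice. This settles \emph{every} reduction type in residue characteristic $2$, including good supersingular. Step two deforms a curve with $v_0\mid p$ odd and additive reduction: its auxiliary places $v\ne v_0$ now include places above $2$ where the curve is merely semistable, and in particular can be good supersingular; these cases were not among the directly verified ones and are only available \emph{because} step one has been carried out. If you instead try to do both deformations in parallel, or swap the order, the input at $v\mid 2$ is missing. Your proposal sidesteps this subtlety only because you have (in principle) proved all the $v\nmid 2$ cases by hand — but that is precisely the computation the paper wants to avoid.

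Two smaller points. First, the passage from $a\ne 0$ to $a=0$, and indeed the entire deformation argument, leans on Proposition~\ref{continuity} (local constancy of the conductor, Tamagawa number, differential term and root number, via Tate's algorithm and Kisin's result on local constancy of $T_lE$ in families); you treat this as a side issue, but it is a necessary lemma for both steps. Second, for the deformation to pin down the remaining local identity you need an auxiliary curve whose $2$-parity is \emph{known} over the totally real field — the paper arranges a place of multiplicative reduction so that Theorem~\ref{2parthm} applies; you mention this but also drag in Theorem~\ref{kratun}, which is not actually used in the paper's closing argument for this theorem.

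In short: your proposal is a valid alternative decomposition of the work (explicit computation at all $v\nmid 2$, deformation only at $p=2$, as in \cite{Isogroot}), but it is not the argument these notes give, it requires substantially more explicit local computation than your sketch acknowledges, and it misses that the paper's two-step deformation lets one drop the $v\nmid2$ additive computations entirely.
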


This conjecture together with \eqref{2isogfor}
implies Theorem \ref{2isogthm}. Indeed, if $E$ as in \ref{2isognot}
is now defined over a number field $K$, the product formula for the Hilbert
symbol
$$
  \prod_v (x,y)_{K_v} = 1,\qquad \text{for all} \quad x,y\in K^\times
$$
implies that the correction term disappears globally.

Next, we prove the conjecture in a few cases.
As we dealt with infinite places already, assume from now on that
$F=K_v$ is non-Archimedean, i.e. a finite extension of $\Q_p$.
Also we may deal with the annoying exceptional case $a=0$:
if we prove the conjecture in all cases when $|a|_F$ is small,
then it holds when $a=0$ as well, because both the left- and the right-hand
side in the conjecture are continuous functions of the coefficients
$a$ and $b$ of $E$; we already proved this for the Hilbert symbols,
and for $C(E/F,\omega)$ and $w(E/F)$ this is a general fact:

\begin{proposition}
\label{continuity}
Suppose $E/F$ an elliptic curve in Weierstrass form,
$$
  E: y^2 + a_1 xy + a_3y = x^3 +a_2 x^2 + a_4 x + a_6, \qquad a_i\in F.
$$
There is an $\epsilon>0$ such that changing the $a_i$ to any $a'_i$
with $|a_i-a'_i|_F<\epsilon$ does not change the conductor,
minimal discriminant, Tamagawa number, $C(E,\frac{dx}{2y+a_1x+a_3})$, the root number of $E$
and the Tate module
$T_l E$ as a $\Gal(\bar F/F)$-module for any given $l\ne p$.
\end{proposition}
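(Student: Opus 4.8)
The plan is to reduce all the listed invariants to two pieces of data and then control each. First, the discriminant $\Delta=\Delta(E)$ of the given Weierstrass equation is a polynomial in $a_1,\dots,a_6$ and is nonzero, so $v_F(\Delta)$ is unchanged by any sufficiently small perturbation. Second, fix an auxiliary prime $l\ne p$ with $l\ge 5$. I claim that each of the conductor exponent, the minimal discriminant, the Kodaira type, the Tamagawa number $c_v$, the quantity $C(E/F,\omega)$ and the root number $w(E/F)$ is a function of the integral Galois module $T_lE$ together with the integer $v_F(\Delta)$. Indeed, by the analysis of \S\ref{s:root} the $G_F$-representation $V_lE$ determines the conductor exponent (the tame and wild parts of the inertia action) and the Weil--Deligne representation $(V_lE)^*\tensor\C$, hence $w(E/F)$; the structure of $T_lE$ in each reduction case recorded in \S\ref{s:root}, together with Ogg's formula and the classification of N\'eron models, shows that the \emph{integral} module $T_lE$ pins down the Kodaira type, $c_v$, the component group of the N\'eron special fibre and $v_F(\Delta_{\min})$; and then $C(E/F,\omega)=c_v\,|\omega/\omega_v^{\circ}|_v$ is determined, since $|\omega/\omega_v^{\circ}|_v$ depends only on $v_F(\Delta)-v_F(\Delta_{\min})$. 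So the whole proposition reduces to showing that $T_lE$, as a $G_F$-module, is locally constant in the coefficients, for every $l\ne p$.

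For this I would combine Krasner's lemma with the Serre--Tate criterion. Fix an integer $m\ge 3$ with $p\nmid m$ (e.g.\ $m=l$ if $l\ge 3$, and $m\in\{3,4\}$ otherwise). The coordinates of the points of $E[m]$ satisfy equations --- the $m$-division polynomial together with the associated quadratics for the $y$-coordinates --- whose coefficients are polynomials in $a_1,\dots,a_6$, so for a sufficiently small perturbation $E'$ the coordinates of $E'[m]$ are $p$-adically close to those of $E[m]$. Krasner's lemma then yields $F(E'[m])=F(E[m])=:F'$ and a $G_F$-equivariant isomorphism $E[m]\iso E'[m]$ (the group law is also polynomial in the coefficients, which is what makes the labellings match). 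By the N\'eron--Ogg--Shafarevich / Serre--Tate criterion, $E$ --- and hence $E'$ --- acquires good reduction over $F'$ in the potentially good case, and split multiplicative reduction over $F'$ in the potentially multiplicative case, after absorbing a quadratic twist that is again continuous in the coefficients (the twisting element being a ratio of Weierstrass invariants, which is nonzero since $j(E)\notin\O_F$). Shrinking the perturbation further so that the $F'$-minimal models of $E$ and $E'$ are congruent modulo $\mathfrak m_{F'}$, one obtains: in the good case, that $E$ and $E'$ have the same reduction over the residue field of $F'$; in the split multiplicative case, that their Tate parameters differ by an element of $1+\mathfrak m_{F'}$, which is $l$-divisible since $l\ne p$. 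Either way $T_lE\iso T_lE'$ as $G_{F'}$-modules, compatibly with the descent datum carried by the $\Gal(F'/F)$-module $E[m]$, and therefore $T_lE\iso T_lE'$ as $G_F$-modules.

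The hard part is the bookkeeping in the last step. One must check that a small perturbation of the original (a priori non-minimal) equation over $F$ induces a small, controlled perturbation of the minimal model over the auxiliary field $F'$ --- which is the point at which coefficients with $j(E)\in\{0,1728\}$, where a naive Tate's-algorithm argument would branch, need care; one must make precise that the $G_{F'}$-isomorphism of Tate modules can be chosen compatibly with descent to $G_F$ (equivalently, that $T_lE$ as a $G_F$-module is recovered from $T_lE|_{G_{F'}}$ and the $\Gal(F'/F)$-module $E[m]$); and one should confirm the claim used in the first paragraph that $c_v$ and the N\'eron component group are genuinely functions of the integral module $T_lE$ and not merely of $V_lE$. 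None of these is deep, but they are where the work lies. It is precisely this Proposition, applied near $a=0$, that will license the reduction of Conjecture \ref{2isogconj} to the case $a\ne 0$.
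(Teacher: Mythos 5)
The paper's own proof is essentially a pair of citations: local constancy of the conductor, minimal discriminant, Tamagawa number and $C(E,\omega)$ is read off directly from Tate's algorithm (which computes these quantities from valuations and residues of polynomial expressions in $a_1,\dots,a_6$, so its output cannot change under a small enough perturbation); local constancy of $T_lE$ as a $G_F$-module is Kisin's theorem \cite{Kis}; and the root number is a function of $V_lE$, with Helfgott and Rohrlich offered as a more elementary alternative route for it. Your proof inverts this: you try to make $T_lE$ (together with $v_F(\Delta)$ of the given, possibly non-minimal, model) carry \emph{all} of the invariants, and then you try to prove local constancy of $T_lE$ from scratch via Krasner plus N\'eron--Ogg--Shafarevich/Serre--Tate --- i.e.\ you are rederiving Kisin's theorem rather than citing it. Both halves of your plan need repair. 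The first reduction is overkill and, as written, has a gap in the (potentially) multiplicative case: you assert that $T_lE$ ``pins down'' the component group, $c_v$ and $v_F(\Delta_{\min})$, but the extension class of $T_lE$ only gives $v_F(q)=v_F(\Delta_{\min})$ as an element of $\Z_l$ (equivalently $\bmod\ l^n$ for all $n$), and you need a separate boundedness input (say $v_F(\Delta_{\min})\le v_F(\Delta)$) to promote this to an honest integer; none of this is necessary if you notice that Tate's algorithm already yields local constancy of these four quantities by inspection. The second half is the genuinely hard part, and you yourself flag that the bookkeeping --- the descent of the $G_{F'}$-isomorphism of Tate modules to $G_F$, the behaviour near $j\in\{0,1728\}$ and the boundaries of Tate's algorithm, the compatibility of minimal models under perturbation --- is unfinished. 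These are not minor: they are precisely what Kisin's theorem handles, and an argument that stops at ``none of these is deep, but they are where the work lies'' has not closed the proof. The efficient fix is to keep your observation that $v_F(\Delta)$ is trivially locally constant, replace the first paragraph by a direct appeal to Tate's algorithm for the conductor, minimal discriminant, Tamagawa number and $C$, and cite Kisin \cite{Kis} (or, for the root number alone, Helfgott \cite{Hel} in the potentially good case and Rohrlich's formula \cite{RohG} in the potentially multiplicative case) rather than attempting a self-contained proof of local constancy of $T_lE$.
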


\begin{proof}
The assertion for the conductor, minimal discriminant,
Tamagawa number and $C$ follows from
Tate's algorithm (\cite{TatA} or \cite{Sil2} IV.9).
The claim for $T_l E$ is a result of Kisin (\cite{Kis} p. 569),
and the root number is a function of $V_l E=T_l E\tensor\Q_l$.
Alternatively, that the root number is locally constant can be proved
in a more elementary way: see Helfgott's \cite{Hel} Prop. 4.2
when $E$ has potentially good reduction; in the potentially multiplicative
case it follows from Rohrlich's formula (\cite{RohG}~Thm.~2(ii)).
\end{proof}

So assume from now on that $a\ne 0$.
Note also that our chosen model $y^2=x^3+ax^2+bx$ is unique up to
transformations $a\mapsto u^2a, b\mapsto u^4b$ for $u\in K^\times$.
As these do not change the Hilbert symbols $(a,-b), (-2a,a^2-4b)$ and
$(-2,-b)$, we may and will assume that the model is integral
and minimal when $v\nmid 2$.
Now we prove the conjecture in all cases when $E$ is semistable
with $v\nmid 2$; recall that $\ord_2 C_{E/F}=\ord_2 c(E/F)$
in this case, since the quotient $C/c$ is a power of the
residue characteristic.

\subsection{Good reduction at $v\nmid 2$}

Here $w(E/F)=1$, $\sigma_\phi(E/F)=1$ and $b,\delta\in\O_{F}^\times$.
If $a\in\O_{F}^\times$, then both the Hilbert symbols are $($unit,unit$)$,
hence trivial. For $a\equiv0\mod\m$, the expression
$-b\delta\equiv 4b^2\mod\m$ is a non-zero square mod $\m$,
so the product of the Hilbert symbols is again trivial.

\subsection{Split multiplicative reduction at $v\nmid 2$}
\label{s:splmul2}

Write $E/F$ as a Tate curve (\cite{Sil2} \S V.3)
$$
  E_q: y^2+xy = x^3+a_4(q)x+a_6(q), \qquad E(F)\iso F^\times/q^\Z,
$$
with $q\in m_F$ of valuation $v(q)=v(\Delta)$.  The coefficients have
expansions
$$
  a_4(q)=-5s_3(q), \quad a_6(q)=-\frac{5s_3(q)+7s_5(q)}{12}, \quad
    s_k(q)=\sum_{n\ge 1}\frac{n^k q^n}{1-q^n}\>,
$$
and they start
\beq
a_4(q)=-5q-45q^2-140q^3-365q^4-630q^5+O(q^6),\cr
a_6(q)=-q-23q^2-154q^3-647q^4-1876q^5+O(q^6)\>.
\eeq
The two-torsion, as a Galois set, is $\{1,-1,\sqrt{q},-\sqrt{q}\}$. For $u\ne 1$
in this set, the corresponding point on $E$ has coordinates
\beq
  X(u,q) = \frac{u}{(1-u)^2} + \sum\limits_{n\ge 1} \bigl(
    \frac{q^nu}{(1-q^nu)^2} + \frac{q^nu^{-1}}{(1-q^nu^{-1})^2} -2\frac{q^n}{(1-q^n)^2}
  \bigr),\cr
  Y(u,q) = \frac{u^2}{(1-u)^3} + \sum\limits_{n\ge 1} \bigl(
    \frac{q^{2n}u^2}{(1-q^nu)^3} + \frac{q^nu^{-1}}{(1-q^nu^{-1})^2} +\frac{q^n}{(1-q^n)^2}
  \bigr).
\eeq
We now have two cases to consider: the 2-torsion point
$(X(-1,q),Y(-1,q))\in E_q$ and (renaming $\pm\sqrt{q}$ by $q$)
the 2-torsion point $(X(q,q^2),Y(q,q^2))\in E_{q^2}$. In both cases,
we have $c(E)/c(E')=2^{\pm 1}$ and $w(E/F)=-1$, so we need
\beql{splitfor}
    (a,-b) \> (-2a,\delta) = 1\>,
\eeql
where $a, b, \delta$ are the invariants of the curve
transformed back to the form \ref{2isognot}
with the 2-torsion point at $(0,0)$.
First of all, $E_q$ has a model
$$
  y^2 = x^3+x^2/4+a_4(q)x+a_6(q)\>.
$$
Let $r=-X(u,q)$, and write $a_4=a_4(q), a_6=a_6(q)$. Then, after
translation, the curve becomes
$$
  E: y^2 = x^3+ax^2+bx, \quad a=1/4-3r, \quad b=2a_4-r/2+3r^2.
$$

Suppose we are in Case 1, so $r=-X(-1,q)$. Then the substitution
\beql{mulsub}
  x\to 4x-2r+1/2, \quad y\to 8y+4x
\eeql
transforms $E'$ into the form
$$
  E^\dagger: y^2+xy = x^3 + (-5q^2+O(q^4)) x + (-q^2+O(q^4))\>.
$$
We use the notation $O(q^n)$ to indicate a power series in $q$ with
coefficients in $\O_{F}$ that begins with $a_nq^n+...$.
In fact, $E^\dagger=E_{q^2}$ but we will not need this; it is only
important that it is again a Tate curve (in particular, this model
is minimal). From the expansions
$$
  r = 1/4+4 O(q), \quad
  a = -1/2+4 O(q), \quad
  b = 1/16+O(q),
$$
we have
$$
  (a,-b) = \text{(unit, unit)} = 1, \quad
  (-2a,\delta) = (1-8O(q),\delta) = (\square,\delta) = 1\>.
$$
Case 2 is similar: here
$$
  a = 1/4+2\, O(q), \quad
  b = q+O(q^2), \quad
  \delta = 1/16 + O(q)\>,
$$
so $a$ and $\delta$ are squares in $F$, and both Hilbert symbols
are therefore trivial.

\subsection{Nonsplit multiplicative reduction at $v\nmid2$}

Let $\eta\in O_F^\times$ be a non-square unit, so that $F(\eta)/F$
is the unramified quadratic extension of $F$.
Consider the twist of $E$ by $\eta$,
\beq
  E:      & y^2=x^3+ax^2+bx \>, \cr
  E_\eta: & y^2=x^3+\eta ax^2+\eta^2 bx \>.
\eeq
It has split multiplicative reduction, so by \eqref{splitfor}
$$
  (\eta a,-\eta^2 b) (2\eta a,\eta^2 (a^2-4b)) = 1.
$$
Now
\beq
  (\eta a,-\eta^2 b) = (\eta a,-b) = (\eta,-b) (a,-b), \cr
  (2\eta a,\eta^2 (a^2-4b)) = (2\eta a,a^2-4b) = (\eta,a^2-4b)(2a,a^2-4b),
\eeq
so comparing with the Hilbert symbols $(a,-b)$ and $(-2a,\delta)$
we have an extra term
\beql{2smcorr}
  (\eta,-b(a^2-4b)) = (\eta,-b^2\Delta(E')/16\Delta(E)) = (\eta,-\Delta(E')/\Delta(E)) \>.
\eeql
Because $x$ is a norm from $F(\eta)^\times$ to $F^\times$ if and only if $v(x)$ is even,
this Hilbert symbol is trivial precisely when
$v(\Delta(E'))\equiv v(\Delta(E))\mod 2$. From Tate's algorithm
(\cite{Sil2}, IV.9.4, Step 2),
$$
  c(E) = \left\{ \begin{array}{ll}
    1, & \text{$v(\Delta(E))$ odd},\cr
    2, & \text{$v(\Delta(E))$ even},
  \end{array}\right.
  \quad
  c(E') = \left\{ \begin{array}{ll}
    1, & \text{$v(\Delta(E'))$ odd},\cr
    2, & \text{$v(\Delta(E'))$ even},
  \end{array}\right.
$$
so the correction term \eqref{2smcorr} is trivial if and only
$c(E)/c(E')$ has even 2-valuation. This proves
Conjecture \ref{2isogconj} in the non-split multiplicative case,
when $v\nmid 2$.

\subsection{Deforming to totally real fields}
\label{s:dtrf}

There are several other reduction types, including all cases
when $v\nmid 2$ and when $E$ has good ordinary or multiplicative
reduction at $v|2$,
when Conjecture \ref{2isogconj} can be shown to hold directly \cite{Isogroot}.
It would be satisfying to do this in the remaining cases as well:

\begin{problem}
If $F/\Q_2$ is finite, and $E/F$ has good supersingular or
additive reduction, prove Conjecture \ref{2isogconj} directly.
\end{problem}

Having tried and failed to do this, we will complete the proof
of Conjecture \ref{2isogconj} and of Theorem \ref{2isogthm} by
a global argument. The idea is that if $E/K$ is an elliptic curve over a
number field and the conjecture holds for it at all primes but one,
the conjecture at the remaining prime is {\em equivalent} to the
2-parity conjecture for $E$. But there is a large supply of elliptic
curves $E$, namely those defined over totally real fields with
non-integral $j$-invariants, for which we know the 2-parity conjecture.
There are more than enough of these to approximate any given
elliptic curve over $F$.

\begin{assumption}
\label{2isogass}
From now on $F/\Q_2$ is finite, $a,b\in F$ satisfy
$b\ne 0, a^2-4b\ne 0$, and $E_{a,b}$ is an elliptic curve as in \ref{2isognot},
$$
  E_{a,b}: y^2 = x^3 + ax^2 + bx.
$$
\end{assumption}

\begin{lemma}
\label{2deflem}
There exists a totally real field $K$ with a unique place $v_0|2$, and
$\tilde a,\tilde b\in K$ such that
\begin{enumerate}
\item $K_{v_0}\iso F$.
\item Under this identification,
$|a-\tilde a|_{v_0}$ and $|b-\tilde b|_{v_0}$ are so small that
all terms in Conjecture \ref{2isogconj} are the same for $E_{a,b}/F$
and for $E_{\tilde a,\tilde b}/K_{v_0}$.
\item $E_{\tilde a,\tilde b}$ is semistable at all primes $v\ne v_0$ of $K$
      and has non-integral $j$-invariant.
\end{enumerate}
\end{lemma}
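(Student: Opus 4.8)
The plan is to build $K$ by a Krasner's-lemma argument, then to pin down $\tilde a,\tilde b$ by weak approximation so that $E_{\tilde a,\tilde b}$ has only good or multiplicative reduction away from $v_0$ and genuine multiplicative reduction at one auxiliary prime (this last point being what forces the $j$-invariant to be non-integral), and finally to read off (2) from Proposition \ref{continuity}. I start with $K$. Write $F=\Q_2(\theta)$ with $\theta$ a root of a monic irreducible $g\in\Q_2[x]$ of degree $n=[F:\Q_2]$. Using weak approximation in $\Q$ at the places $2$ and $\infty$, I would pick a monic $\tilde g\in\Q[x]$ of degree $n$ whose coefficients are $2$-adically close enough to those of $g$ for Krasner's lemma, and archimedean-ly close enough to those of a fixed $\prod_{i=1}^{n}(x-c_i)$ with distinct rationals $c_i$ that $\tilde g$ still has $n$ distinct real roots. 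Then $\tilde g$ is irreducible over $\Q_2$ — a proper $\Q_2$-factor would contradict Krasner — hence over $\Q$, so $K:=\Q[x]/(\tilde g)$ is totally real of degree $n$; the $2$-adic root of $\tilde g$ generating $F$ determines a place $v_0$ of $K$ with $K_{v_0}\iso F$, and since $[K:\Q]=n=[K_{v_0}:\Q_2]=\sum_{w\mid2}[K_w:\Q_2]$, this $v_0$ is the unique place of $K$ above $2$. That settles (1).

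Next I would choose $\tilde a$ and $\tilde b$. After the permitted rescaling $(a,b)\mapsto(u^2a,u^4b)$, which alters neither $E_{a,b}/F$ nor the Hilbert symbols in Conjecture \ref{2isogconj}, I may assume $a,b\in\O_F$; let $\epsilon>0$ be the radius supplied by Proposition \ref{continuity} applied to $E_{a,b}$ and to its $2$-isogenous curve $E'$. Using density of $\O_K$ in $\O_{K_{v_0}}=\O_F$, take $\tilde a\in\O_K$ with $|\tilde a-a|_{v_0}<\epsilon$ and let $S$ be the finite set of primes $v\ne v_0$ of $K$ dividing $\tilde a$. Fix an auxiliary prime $v_1\notin S\cup\{v_0\}$ with $v_1\nmid2$. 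By the approximation theorem for $\O_K$ choose $\tilde b\in\O_K$ with $|\tilde b-b|_{v_0}<\epsilon$, with $\tilde b$ a unit at every $v\in S$, and with $v_1(\tilde b)=1$; these conditions are imposed at pairwise distinct primes, hence are jointly satisfiable. Shrinking $\epsilon$ if necessary, $\tilde b\ne0$ and $\tilde a^2-4\tilde b\ne0$, so $E_{\tilde a,\tilde b}$ is an elliptic curve of the form in \ref{2isognot}.

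Now I check the reduction of $E_{\tilde a,\tilde b}\colon y^2=x^3+\tilde ax^2+\tilde bx$, for which $\Delta=16\,\tilde b^2(\tilde a^2-4\tilde b)$ and $c_4=16(\tilde a^2-3\tilde b)$. At a prime $v\nmid2$ with $v\ne v_0$: if $v\nmid\tilde a$ then $v$ cannot divide both $\tilde b$ and $\tilde a^2-4\tilde b$, the model stays minimal since $v\nmid c_4$ in either case, and inspecting $x(x^2+\tilde ax+\tilde b)\bmod v$ shows the reduction is good or multiplicative; if $v\mid\tilde a$, i.e. $v\in S$, then $v\nmid\tilde b$ forces $v\nmid\Delta$, so the reduction is good. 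Since $v_0$ is the only prime above $2$, $E_{\tilde a,\tilde b}$ is semistable at every $v\ne v_0$; and at $v_1$, where $v_1\nmid\tilde a$ and $v_1(\tilde b)=1$, the reduction is multiplicative, so $v_1\bigl(j(E_{\tilde a,\tilde b})\bigr)=-v_1(\Delta_{\min})<0$ and the $j$-invariant is non-integral, which is (3). For (2), as $|\tilde a-a|_{v_0},|\tilde b-b|_{v_0}<\epsilon$, Proposition \ref{continuity} applied to $E$ and to $E'$ leaves $w(E/K_{v_0})$, $C(E/K_{v_0},\omega)$ and $C(E'/K_{v_0},\omega')$ unchanged; the quantity $\sigma_\phi(E/K_{v_0})$, which by the local form of Theorem \ref{thmQ} is a function of those period--Tamagawa terms together with the fixed group $\ker\phi$, is therefore unchanged as well, and the Hilbert symbols $(\tilde a,-\tilde b)$ and $(-2\tilde a,\tilde a^2-4\tilde b)$ over $K_{v_0}$ agree with $(a,-b)_F$ and $(-2a,a^2-4b)_F$ by local constancy of the Hilbert symbol. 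Hence every term of Conjecture \ref{2isogconj} matches for $E_{a,b}/F$ and $E_{\tilde a,\tilde b}/K_{v_0}$.

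The step I expect to be the real obstacle is the first one: producing a single number field that is simultaneously totally real and realises $F$ as the completion at its \emph{only} prime above $2$. Once Krasner's lemma furnishes such a field, the uniqueness of $v_0$ falls out of the degree count, and that is precisely what turns the control of the reduction of $E_{\tilde a,\tilde b}$ at the remaining primes — and hence the non-integrality of $j$ — into a routine weak-approximation exercise rather than a genuine constraint.
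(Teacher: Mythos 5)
Your proof is correct and follows essentially the same route as the paper: Krasner's lemma plus weak approximation at $2$ and $\infty$ to build the totally real $K$ with $K_{v_0}\iso F$, weak approximation in $\O_K$ to choose $\tilde a,\tilde b$ so that one of $\Delta$, $c_4$ is a $v$-unit at every $v\ne v_0$ while forcing multiplicative reduction at one auxiliary prime, and Proposition \ref{continuity} together with local constancy of the Hilbert symbol for condition (2). The only differences are cosmetic — you impose $v_1(\tilde b)=1$ where the paper merely asks that $\tilde b$ be divisible by some prime away from $2$, and you spell out the irreducibility/degree-count argument for the uniqueness of $v_0$ and the case-by-case verification of semistability in a bit more detail than the paper does.
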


\begin{proof}
(1) Say $F=\Q_2[x]/(f)$ with monic $f\in\Q_2[x]$.
If $\tilde f\in \Q[x]$ is monic and $2$-adically close enough
to $f$, it defines the same extension of $\Q_2$ by Krasner's lemma. Now pick
such an $\tilde f$ which is also $\R$-close to any polynomial in $\R[x]$
whose roots are real (weak approximation), and set $K=\Q[x]/(\tilde f)$
and $v_0$ to be the prime above $2$ in $K$.

(2) Next, provided $\tilde a,\tilde b\in K$ are close enough to $a,b$
at $v_0$, the continuity of the Hilbert symbol (and the computation
preceding Conjecture \ref{2isogconj}) and Proposition \ref{continuity}
imply that (2) holds.

(3) As $v_0$ is the unique prime above 2, it suffices to guarantee that
$E_{\tilde a,\tilde b}$ is semistable at primes $v\nmid 2$ of $K$
and has $j(E)\not\in O_K$.
The curve has standard invariants
$$
  c_4 = 16(\tilde a^2-3\tilde b), \quad
  c_6 = -32\tilde a(2\tilde a^2-9b), \quad
  \Delta = 16\tilde b^2(\tilde a^2-4\tilde b).
$$
Pick any non-zero $\tilde a\in O_K$ which is close to $a$ at $v_0$.
Next, choose any $\tilde b\in O_K$ which is close to $b$ at $v_0$ and close
to 1 at all primes $v\ne v_0$ that divide $\tilde a$.
Then either $\Delta$ or $c_4$ is a unit at every prime $v\ne v_0$,
and this ensures that
$E$ is semistable outside $v_0$ (\cite{Sil1} Prop. VII.5.1).
If, in addition, we force $b$ to be divisible by at least one prime
$\p\nmid 2$ (weak approxmation all the time), this guarantees that
$j(E)$ is non-integral at $\p$.
\end{proof}

\begin{claim}
Conjecture \ref{2isogconj} is true.
\end{claim}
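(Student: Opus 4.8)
The plan is to obtain the remaining (genuinely hard) cases of Conjecture \ref{2isogconj} --- good supersingular or additive reduction at a prime above $2$ --- not by a direct local computation, but by a global argument that plays the local statement off against the $2$-parity conjecture over totally real fields. Fix $F/\Q_2$ finite and $a,b\in F$ with $b\ne 0$ and $a^2-4b\ne 0$; by Proposition \ref{continuity} and the continuity of Hilbert symbols (the computation preceding Conjecture \ref{2isogconj}) we may assume $a\ne 0$. Apply Lemma \ref{2deflem} to produce a totally real field $K$ with a unique place $v_0\mid 2$, an identification $K_{v_0}\iso F$, and elements $\tilde a,\tilde b\in K$ so close to $a,b$ at $v_0$ that every term appearing in Conjecture \ref{2isogconj} agrees for $E_{a,b}/F$ and $E_{\tilde a,\tilde b}/K_{v_0}$, while $E:=E_{\tilde a,\tilde b}$ is semistable at every place $v\ne v_0$ and has non-integral $j$-invariant.

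Next I would carry out the term-by-term comparison globally for $E/K$. For each place $v$ of $K$ put $\tau_v=(\tilde a,-\tilde b)_{K_v}\,(-2\tilde a,\tilde a^2-4\tilde b)_{K_v}\in\{\pm1\}$. I claim Conjecture \ref{2isogconj} is already proved at every $v\ne v_0$: the archimedean places were handled directly, and at every finite $v\ne v_0$ we have $v\nmid 2$ (since $v_0$ is the only prime above $2$) and $E$ is semistable there, so one of the good-reduction, split-multiplicative or non-split-multiplicative computations applies. Hence $w(E/K_v)=\sigma_\phi(E/K_v)\,\tau_v$ for all $v\ne v_0$.

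Then I would feed in the two global inputs. First, since $K$ is totally real and $E$ has non-integral $j$-invariant, Theorem \ref{2parthm} gives the $2$-parity conjecture for $E/K$, which by \eqref{2isogfor} says $\prod_v\sigma_\phi(E/K_v)=(-1)^{\rk_2 E/K}=w(E/K)=\prod_v w(E/K_v)$, i.e. $\prod_v w(E/K_v)/\sigma_\phi(E/K_v)=1$. Second, the product formula for the Hilbert symbol gives $\prod_v\tau_v=1$, hence $\prod_{v\ne v_0}\tau_v=\tau_{v_0}^{-1}$. Combining these with the local identities at $v\ne v_0$,
$$
  \frac{w(E/K_{v_0})}{\sigma_\phi(E/K_{v_0})}
  =\Bigl(\prod_{v\ne v_0}\frac{w(E/K_v)}{\sigma_\phi(E/K_v)}\Bigr)^{-1}
  =\Bigl(\prod_{v\ne v_0}\tau_v\Bigr)^{-1}
  =\tau_{v_0},
$$
so Conjecture \ref{2isogconj} holds for $E_{\tilde a,\tilde b}/K_{v_0}$. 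By the approximation guaranteed in Lemma \ref{2deflem}(2) every term in the conjecture is the same for $E_{a,b}/F$, so the conjecture holds there; as $F$ and $a,b$ were arbitrary (with the case $a=0$ reduced to $a\ne 0$ above), this establishes Conjecture \ref{2isogconj} in full.

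The substantive work has been front-loaded into Lemma \ref{2deflem} (the deformation to a totally real field with a single prime above $2$) and Theorem \ref{2parthm} (the $2$-parity conjecture there), so the step above is essentially bookkeeping. The only points needing care are: (i) confirming that the reduction types handled ``directly'' --- archimedean places, and good, split-multiplicative and non-split-multiplicative reduction at $v\nmid 2$ --- genuinely exhaust all places $v\ne v_0$ of the deformed curve, which is precisely what the semistability clause of Lemma \ref{2deflem}(3) ensures; and (ii) that $\tilde a$ may be chosen nonzero, so that each $\tau_v$ is defined and the product formula applies without the exceptional case.
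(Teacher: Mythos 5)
Your argument correctly disposes of the case where $F$ has residue characteristic $2$, and it follows the paper's strategy for that case essentially verbatim: deform via Lemma~\ref{2deflem} to a totally real $K$ with a single prime $v_0\mid 2$, check the conjecture at every $v\ne v_0$ using the archimedean and semistable--$(v\nmid 2)$ computations, and then trade the product formula for the Hilbert symbol against the $2$-parity theorem (Theorem~\ref{2parthm}) to isolate the term at $v_0$. The bookkeeping with $\tau_v$ is fine, and the observation that the exceptional case $a=0$ is handled by continuity is also what the paper does.

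However, there is a genuine gap: the Claim asserts Conjecture~\ref{2isogconj} for \emph{all} local fields $\cK$ of characteristic zero, and your proof only treats $F/\Q_2$. The residue-characteristic-$p$ case with $p$ odd and $E/F$ of \emph{additive} reduction is never addressed. The direct computations earlier in \S\ref{s:2isog} only cover archimedean places and good, split-multiplicative and non-split-multiplicative reduction at $v\nmid 2$; additive reduction at $v\nmid 2$ is explicitly not among them (the text remarks it \emph{could} be done directly as in \cite{Isogroot}, but these notes do not carry it out, so it cannot be taken for granted). Your opening sentence, which frames ``the remaining cases'' as only those ``at a prime above $2$,'' is therefore too narrow. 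The paper closes this gap by a \emph{second} deformation pass: given $F/\Q_p$ with $p$ odd and $E/F$ additive, it again constructs a totally real $K$ and an approximating curve $\tilde E/K$, now with a unique troublesome place $v_0\mid p$ and semistable reduction at all $v\ne v_0$, and then uses the same product-formula-plus-$2$-parity argument. Crucially, this second pass is only possible \emph{after} the first one, because now the places $v\mid 2$ of $K$ (for which $\tilde E$ could have arbitrary reduction) are already covered by the just-established residue-characteristic-$2$ case. You should add this second stage to complete the proof.
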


\begin{proof}
First we finish off the case when $F$ has residue characteristic 2.
Let $F/\Q_2$ and $E_{a,b}$ be as above,
and $K$ and $E=E_{\tilde a,\tilde b}$ as in the lemma.
We want to show that Conjecture \ref{2isogconj} holds for $E_{a,b}/F$,
equivalently for $E/K_{v_0}$.

Because $K$ has a unique prime $v_0$ above 2, and $E$ is semistable at all
other primes, Conjecture \ref{2isogconj} holds for $E/K_v$ for all
$v\ne v_0$. In view of \eqref{2isogfor}, the conjecture at $v_0$
is equivalent to the 2-parity conjecture for $E/K$. But $E$ has non-integral
$j$-invariant and $K$ is totally real, so the latter holds.

Finally, suppose $F$ has odd residue characteristic $p$, and $E/F$ has
additive reduction. Exactly as in Lemma \ref{2deflem},
find a totally real field $K$ with a place $v_0|p$ and $\tilde E/K$ which
is $v_0$-adically close to $E$, and semistable at all $v\ne v_0$.
Again, the 2-parity conjecture for $\tilde E/K$ together with
Conjecture \ref{2isogconj} at all $v\ne v_0$ (now including $v|2$)
proves Conjecture \ref{2isogconj} at $v_0$ as well.
\end{proof}

As explained above, reversing the argument yet again proves Theorem~\ref{2isogthm}.

\section{The $p$-isogeny conjecture}
\label{s:pisog}

What is the analogue of the results of the previous section
for an isogeny whose degree is an odd prime $p$?
Let $\phi: E\to E'$ be such an isogeny, with $E, E'$ and $\phi$ all
defined over $K$. Again, write
$$
  \sigma_\phi(E/K_v) =
  (-1)^{\ord_p \frac{|\coker \phi_v|}{|\ker \phi_v|}},
$$
so that by Theorem \ref{thmQ},
\beql{pisogfor}
\displaystyle
  (-1)^{\rk_p E/K} = \prod_v \sigma_\phi(E/K_v);\quad
  \prod_v w(E/K_v) = w(E/K).
\eeql
Again, let us make a term-by-term comparison at all places.
Fix a place $v$ of $K$.
The difference between $w(E/K_v)$ and $\sigma_\phi(E/K_v)$ is suggested
by the root number computation of Theorem \ref{rootisogp}, and we formulate
the following

\begin{conjecture}
\label{pisogconj}
Let $\cK$ be a local field of characteristic zero, and $E/\cK$ an elliptic curve
with a $p$-isogeny $\phi: E\to E'$ over $\cK$. Write $\cF=\cK(\ker\phi)$.
Then
$$
  w(E/\cK) = \sigma_\phi(E/\cK) \> (-1,\cF/\cK).
$$
\end{conjecture}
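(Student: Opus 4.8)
The plan is to compare the three factors $w(E/\cK)$, $\sigma_\phi(E/\cK)$ and $(-1,\cF/\cK)$ one place at a time, following the template of \S\ref{s:2isog}: settle the Archimedean and the tame non-Archimedean places by direct computation, and reduce the wild case to the $p$-parity conjecture over totally real fields. Over $\cK=\C$ everything is forced: $w=-1$, the isogeny is surjective so $\sigma_\phi=(-1)^{\ord_p(1/p)}=-1$, and $(-1,\C/\C)=1$. Over $\cK=\R$, complex conjugation acts on $\ker\phi\cong\F_p$ through $\pm1$; if trivially then $\cF=\R$ and $\ker\phi\subset E(\R)$, while if through $-1$ then $\cF=\C$ and $\ker\phi\cap E(\R)=0$. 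Since $\phi$ is onto $E'(\R)^\circ$ (an odd-degree map of circles), $|\coker\phi_v|\le2$ has trivial $p$-valuation, and a short check gives $\sigma_\phi=-1$ in the first case and $\sigma_\phi=+1$ in the second, which matches $w=-1$ against $(-1,\cF/\cK)=1$, respectively $-1$.

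Next I would treat the finite places $v\nmid p$, split by reduction type, with the model integral and minimal. For good reduction all three factors equal $1$: reduction identifies $|\ker\phi_v|$ and $|\coker\phi_v|$ with $|H^0(\F_q,\ker\tilde\phi)|$ and $|H^1(\F_q,\ker\tilde\phi)|$, which coincide, and $\cF/\cK$ is unramified, so $-1$ is a norm. For multiplicative reduction I would use Tate's uniformisation: both $\zeta_p$ and any $p$-th root of the Tate parameter that occurs lie in $\cK^{\mathrm{ur}}$, so $\cF/\cK$ is unramified and $(-1,\cF/\cK)=1$, and it remains to compute $\sigma_\phi$ from the Tate parametrisation, which one finds equal to $w(E/\cK)$ (directly in the split case; the non-split case follows after an unramified quadratic twist, as in \S\ref{s:splmul2}). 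For additive reduction with $p\ge5$ there is nothing to compute: Theorem \ref{rootisogp} gives $w(E/\cK)=(-1,\cF/\cK)$ at once, while $\ord_p\tfrac{|\coker\phi_v|}{|\ker\phi_v|}$ is the $p$-valuation of a ratio of Tamagawa numbers ($\le4<p$) times a power of the residue cardinality, all prime to $p$, so $\sigma_\phi=1$.

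There remain the places $v\mid p$, together with, when $p=3$, the additive places at $v\nmid3$ (outside the range of Theorem \ref{rootisogp}). For these I would run the ``deforming to totally real fields'' argument of \S\ref{s:dtrf}: given $(E,\phi)$ over $\cK=K_v$, produce a totally real field $\tilde K$ with a unique place $v_0\mid p$, an identification $\tilde K_{v_0}\cong\cK$, and an elliptic curve $\tilde E/\tilde K$ carrying a $p$-isogeny $\tilde\phi$, all $v_0$-adically close to $(E,\phi)$, with $\tilde E$ semistable outside $v_0$ and of non-integral $j$-invariant. By the cases already settled, Conjecture \ref{pisogconj} holds for $\tilde E$ at every place $v\ne v_0$, so by the product formula \eqref{pisogfor} it is equivalent to the $p$-parity conjecture for $\tilde E/\tilde K$, which holds by Theorem \ref{2parthm}; reversing the implication gives the conjecture for $(E,\phi)$ at $v_0$.

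The crux is this last step. When $p=2$ the kernel is generated by a rational point, so one deforms inside the explicit family $y^2=x^3+ax^2+bx$ and appeals to Proposition \ref{continuity} — including Kisin's local constancy of $T_lE$ for $l$ prime to the residue characteristic — to see that no term changes. For odd $p$ the kernel of $\phi$ is only a Galois-stable line, and at a place $v\mid p$ the deformation must keep $E[p]$, hence $\cF=\cK(\ker\phi)$ and $(-1,\cF/\cK)$, fixed; but that is exactly the part of the torsion Proposition \ref{continuity} does not control. The remedy is to deform within the family of curves-with-a-$p$-isogeny (along $X_0(p)$, or, for $p=3$, within $y^2+a_1xy+a_3y=x^3$ and its quadratic twists), so that the subgroup deforms with the curve, and then to check term-by-term that $w(E/\cK)$, $\sigma_\phi(E/\cK)$ and $(-1,\cF/\cK)$ are locally constant in that family. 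For $p=3$ this is manageable; the difficulty of carrying it out for all $p$ (e.g. supersingular reduction at $v\mid p$) is what keeps the statement only a conjecture in general.
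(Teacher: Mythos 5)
The statement you are proving is labelled a \emph{conjecture} in the paper, and the paper does not prove it: \S\ref{s:pisog} verifies it case by case and then records as an open Problem the case of residue characteristic $p$, additive reduction and $p>7$. Your proposal follows exactly the same route — Archimedean places; good, multiplicative and additive ($p,\ell\ge 5$) reduction for $\ell\ne p$, the last via Theorem \ref{rootisogp}; and an attempted ``deforming to totally real fields'' argument for the remaining cases — and, to its credit, stops where the paper stops. Your diagnosis of the obstruction is correct and is precisely the one the paper makes at the end of \S\ref{s:pisog}: the analogue of Lemma \ref{2deflem} is not available for odd $p$, because Proposition \ref{continuity} controls $T_\ell E$ only for $\ell$ coprime to the residue characteristic, so it pins down neither the $p$-isogeny nor $\cF=\cK(\ker\phi)$; one must instead deform the pair $(E,\phi)$ along $X_0(p)$, which is feasible only when $X_0(p)$ has genus $0$, i.e.\ $p\le 7$. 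So there is no defect in your proposal relative to the paper — you identify the same gap the paper leaves open — but equally there is no proof, because none exists in the paper either.

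Two small remarks on the cases you do handle. Your computation at a real place with $\ker\phi\not\subset E(\R)$ is the correct one: since conjugation acts on $\ker\phi\cong\Z/p$ by $-1$ and $p$ is odd, $H^1(\Gal(\C/\R),\ker\phi)=0$, hence $\coker(\phi:E(\R)\to E'(\R))$ is trivial; so $\sigma_\phi=+1$ and the whole of $w=-1$ is supplied by $(-1,\C/\R)=-1$. The paper's assertion ``$|\coker\phi|=p$'' at that point is a slip — it would make $\sigma_\phi\,(-1,\cF/\cK)=+1\ne w$. Conversely, in the multiplicative case ($\ell\ne p$) your claim that any occurring $p$-th root of the Tate parameter lies in $\cK^{\mathrm{ur}}$ is not literally true in general; the clean argument is the one the paper gives, that $[\cF:\cK]$ divides $p-1$ while the inertia of $\cK(E[p])/\cK$ is pro-$p$, whence $\cF/\cK$ is unramified and the Artin symbol is $1$.
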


If this conjecture is true, applying it for $E/K$ at completions and
taking the product over all places, we get the $p$-parity conjecture
for $E/K$. Let us verify that the conjecture holds in most situations:

If $\cK=\C$ then $|\ker\phi|=p$, $|\coker\phi|=1$, $w(E/\cK)=-1$
and $\cF=\cK$, so the formula holds.
The same is true if $\cK=\R$ and the points in $\ker\phi$ are real.

If $\cK=\R$ and the points in $\ker\phi$ are not defined over $\R$,
then then $|\ker\phi|=1$, $|\coker\phi|=p$ and $\cF=\cC$,
so the correction term $(-1,\C/\R)=-1$ compensates for the kernel,
and the formula holds again.

Suppose $\cK$ is a finite extension of $\Q_l$ for $l\ne p$.
The standard exact sequences for
$E_0(\cK)\subset E(\cK)$ and $E_1(\cK)\subset E_0(\cK)$ (\cite{Sil1} Ch. VII)
show that
$$
  \sigma_\phi(E/\cK) = (-1)^{\ord_p\frac{c(E/\cK)}{c(E'/\cK)}},    
$$
where $c$ is the local Tamagawa number.
(Use that $E$ and $E'$ have the same reduction type and the same number of
points over the residue field, and that $[p]$ is an isomorphism on the
formal groups when $p\ne l$.) Now we have a few cases:

If $E/\cK$ has good reduction, then $\cK(E[p])/\cK$ is unramified by the
N\'eron-Ogg-Shafarevich criterion, in particular $\cF/\cK$ is unramified as
well. So all units are norms in this extension by class field theory,
in particular $-1$ is. The Tamagawa numbers are trivial as well, so
all terms in the conjecture are 1.

If $E/\cK$ has multiplicative reduction, then $\cF/\cK$ is still unramified,
since the inertia in $\cK(E[p])/\cK$ is pro-$p$ in the multiplicative case
and $[\cF:\cK]|(p-1)$ is coprime to $p$; so the Artin symbol is still 1.
Now $w(E/\cK)=-1$ as opposed to the the good case, but also
the quotient $\frac{c(E/K)}{c(E'/K)}$ is either $p$ or $1/p$
(same argument as in \S\ref{s:splmul2} for $p=2$), and the conjecture holds.

If $E/\cK$ has additive reduction and $p,l$ are at least 5,
then the Tamagawa numbers have order $\le 4$, coprime to $p$.
So the conjecture claims that $w(E/K)=(-1,\cF/\cK)$, and this is precisely
what we proved in Theorem \ref{rootisogp}; in fact, it is this computation
that suggests the correction term in the conjecture.

When $p=l$, the problem becomes trickier. It is still manageable when
$E/\cK$ is semistable, by a careful analysis of $\sigma_\phi$
which also involves the action of $\phi$ on the formal groups
(see \cite{Isogroot}). For $p=3$ the whole conjecture can be
settled by a deformation argument, exactly as we did for $p=2$
(see \cite{Kurast}; presumably the argument there also works
when $p=5$ and 7, when $X_0(p)$ still has genus 0).
The remaining problem is

\begin{problem}
Prove Conjecture \ref{pisogconj} when $p>7$, the field $\cK$ has residue
characteristic $p$ and $E/\cK$ has additive reduction.
\end{problem}

Coates, Fukaya, Kato and Sujatha \cite{CFKS} have proved this conjecture
when $E/\cK$ acquires semistable reduction after an abelian extension
of $\cK$, with a `tour de force' computation with crystalline cohomology.
Moreover, they proved it for arbitrary principally polarised
abelian varieties; $p$-isogeny has to be replaced by a $p^g$-isogeny
with a totally isotropic kernel. It would be very interesting to find
at least a conjectural generalisation for $p=2$:

\begin{problem}
Suppose $\cK$ is a local field of characteristic 0, and $\phi: A\to A'$ is an
isogeny of principally polarised abelian varieties over $K$
such that $\phi^t\phi=[2]$ (up to isomorphisms given by the polarisations).
Find a relation between $\sigma_\phi(A/\cK)$ and $w(A/\cK)$.
\end{problem}

This would have important parity implications for abelian varieties.
We also mention here that Trihan and Wuthrich \cite{TW} have proved an
analogue of Conjecture \ref{pisogconj} over function fields of characteristic
$p$, when $\phi$ is the dual of the Frobenius isogeny. Because this
isogeny exists for elliptic curve over a function field, they thus proved
the $p$-parity conjecture in characteristic $p$.

Finally, returning to Conjecture \ref{pisogconj} itself, it would also
be very interesting to see whether it can be settled by a deformation
argument, like we did it for $p=2$. The problem is that because $X_0(p)$
has positive genus for large $p$, it is unclear how to deform
elliptic curves with a $p$-isogeny while keeping control over other places
of the resulting totally real field. It is known how to construct points
of essentially arbitrary varieties over totally real fields
(see e.g. \cite{Pop}),
but it is not clear whether these results are sufficient in this case.

\section{Local compatibility in $S_3$-extensions}

To complete the proof of Theorem \ref{imain} it remains to settle
the second equality in Theorem \ref{ithms3glo}.
Thus, suppose $E/K$ is an elliptic curve, $F/K$ is a Galois $S_3$-extension
and $M/K$ and $L/K$ are the quadratic and cubic intermediate extensions.
We want to show that
\daggerequation{s3glo}{\hbox{$\dagger_{\text{\smaller[6]glo}}$}}{
  w(E/K)w(E/M)w(E/L) = \smash{(-1)^{\ord_3\frac{\CC EF\CC EK^2}{\CC EM\CC EL^2}}}.
}%
Both sides are products of local terms, and,
as in the isogeny case,
we want to compare the contributions above each place of $K$.

Fix such a place $v$ of $K$, and denote by $w_K$, $w_M$, $w_L$ and $w_F$
the product of the local root numbers of $E$ over the places of $K$, $M$, $L$
and $F$ above $v$. Fix also an invariant differential $\omega$ for $E/K$ and
write $C_K$, $C_M$ etc. for the product of the modified local Tamagawa numbers
of $E$ over the places of $K$, $M$ etc., computed using $\omega$
(see end of Notation \ref{mainnot}).
We need to show the following local statement:
\daggerequation{s3loc}{\hbox{$\dagger_{\text{\smaller[6]loc}}$}}{
  w_K w_M w_L = \smash{(-1)^{\ord_3\frac{C_F C_K^2}{C_M C_L^2}}}.
}%

\begin{lemma}
\label{s3loclem}
If there is more than one place above $v$ in $F$ or
$E$ has good reduction at $v$ then \eqref{s3loc} holds.
\end{lemma}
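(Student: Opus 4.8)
The statement splits naturally into two cases according to the hypothesis. First suppose $E$ has good reduction at $v$. Then, as recorded in the good-reduction examples of \S\ref{s:root}, all local root numbers of $E$ above $v$ over any extension are $+1$, so $w_Kw_Mw_L=1$; and by Tate's algorithm all local Tamagawa numbers are $1$ and $|\omega/\omega^o_v|_v$ is a unit (the model is already minimal, or can be taken so, and the period factor is a power of the residue characteristic, hence $3$-adically trivial unless the residue characteristic is $3$ --- in which case one still checks directly that the ratio is a $3$-adic unit using that good reduction is preserved in all the extensions involved). Hence $\ord_3\frac{C_FC_K^2}{C_MC_L^2}=0$ and both sides of \eqref{s3loc} are $+1$.

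Now suppose there is more than one place of $F$ above $v$. The idea is that the decomposition group $D\subset G=\Gal(F/K)$ at a place above $v$ is then a \emph{proper} subgroup of $S_3$, so $D$ is $1$, $C_2$ or $C_3$, and I want to run the counting of local terms through the orbits of $G$ on the places above $v$. For each field $N\in\{K,M,L,F\}$, the places of $N$ above $v$ correspond to the $D$-orbits on $G/H$ where $H$ is the corresponding subgroup (so $H=S_3,C_2,C_3,1$), and the completion at such a place is a field whose local invariants depend only on the stabiliser, i.e. on the conjugacy class of the intersection $D\cap{}^gH$. Concretely, let $E_v=E/K_v$; then $w_N=\prod w(E/N_{\mathfrak w})$ and $C_N=\prod C(E/N_{\mathfrak w},\omega)$ over ${\mathfrak w}\mid v$, and each factor is $w(E/K_v')$ resp. $C(E/K_v',\omega)$ for the appropriate local subextension $K_v'/K_v$.

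The computation then reduces to a small amount of bookkeeping for each possible $D$. If $D=1$ (so $v$ splits completely in $F$): each of $K,M,L,F$ has $1,2,3,6$ places above $v$, all with completion $K_v$, so $w_Kw_Mw_L=w(E/K_v)^{1+2+3}=1$ and $\frac{C_FC_K^2}{C_MC_L^2}=\frac{C(E/K_v,\omega)^{6+2}}{C(E/K_v,\omega)^{2+6}}=1$, done. If $D=C_2$: then $K$ has one place (completion $K_v$), $M$ has two places with completions $K_v$ and the quadratic unramified... no --- $M$ has $|D\backslash G/C_2|$ places; one computes the orbit counts ($M$: $2$ places, completions $K_v$ and a quadratic; $L$: $2$ places with completions $K_v$ and $K_v$; $F$: $3$ places each with completion the quadratic $F^{?}$, $\dots$) and checks that in the alternating product $w_Kw_Mw_L$ and in $\ord_3$ of the $C$-ratio everything cancels in pairs, using that a single quadratic local extension contributes a square to the $C$-ratio (so $\ord_3$ is even) and that $w(E/K_v)$ appears an even number of times. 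The case $D=C_3$ is symmetric with the roles of $C_2$ and $C_3$ interchanged: here $L$ picks up the genuinely ramified/inert cubic behaviour, but a single cubic local extension again contributes an even power of $3$ to... no: here one uses instead that over a cyclic cubic local extension $\rk_3$-type counting and the root-number relation $w(E/K_v')=w(E/K_v)$ for $K_v'/K_v$ cubic (a fact already used in the proof of Theorem \ref{imain}) kill the relevant terms, together with $\ord_3\frac{C_{E/K_v'}}{C_{E/K_v}}$ being the thing that \eqref{pisogfor}-style arguments control --- but since $D=C_3$ means $D$ is cyclic of order prime to $2$, one simply observes that the local Brauer relation $\Theta|_D$ becomes trivial (as $\{1\}-2C_2-C_3+2S_3$ restricts to $D=C_3$ as a multiple of the regular-minus-regular relation, hence to $0$ on characters), so the whole local $C$-ratio is a cube... again one must be a little careful. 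In all three cases the punchline is the same: a proper decomposition group forces the local Brauer relation to degenerate, so both sides of \eqref{s3loc} are forced to be $+1$ by a pairing-up argument.

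\textbf{Main obstacle.} The genuinely fiddly case is $D=C_3$, i.e. $v$ inert-or-ramified of degree $3$ in $L$ and (necessarily) split in $M$. There the cancellation on the root-number side relies on $w(E/K_v')=w(E/K_v)$ for cubic $K_v'/K_v$, which is true but is exactly the kind of statement one is trying to avoid invoking as a black box; and on the $C$-side one needs that $\ord_3$ of the period/Tamagawa ratio over a cubic local field is even, which in residue characteristic $3$ is not obvious and may itself require a short direct argument (or, in the spirit of this paper, a deformation to a totally real field). I expect that handling residue characteristic $3$ uniformly --- making sure the period factors $|\omega/\omega^o_v|_v$, which are powers of $3$, enter the $3$-adic valuation correctly --- is where the real work lies; everywhere else it is orbit-counting and parity bookkeeping.
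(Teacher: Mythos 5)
Your strategy --- case split on the decomposition group $D\subset S_3$ of a place of $F$ above $v$, and show the local terms cancel whenever $D$ is proper --- is essentially the paper's, which instead splits on whether $v$ splits in $M$ (covering $D\subset C_3$), whether the prime of $M$ above $v$ splits in $F/M$ (covering $D\cap C_3=1$), or whether $F/M$ is ramified with $E$ semistable (the residual $D=S_3$ case; note the paper actually proves the statement for \emph{semistable} $E$ when $D=S_3$, slightly more than the good-reduction hypothesis in the lemma's statement, and the subsequent Proposition needs this). However, your execution has concrete errors.

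The orbit counts for $D=C_2$ are wrong. If $D=C_2$, then $M=F^{C_3}$ has \emph{one} place above $v$, with completion a quadratic extension $Q/K_v$ ($D$ acts transitively on the two cosets of $C_3$); $L=F^{C_2}$ has two places, with completions $K_v$ and $Q$ (not two copies of $K_v$); and $F$ has three places, all with completion $Q$. With the correct counts the cancellation is immediate:
$$
  w_Kw_Mw_L=w(E/K_v)^2\,w(E/Q)^2=1, \qquad
  \frac{C_FC_K^2}{C_MC_L^2}=\frac{C(E/Q)^3\,C(E/K_v)^2}{C(E/Q)\cdot C(E/K_v)^2\,C(E/Q)^2}=1.
$$

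More importantly, your stated `main obstacle' ($D=C_3$, the $C$-side) is not an obstacle at all. For $D=C_3$, $M$ has two places above $v$, both with completion $K_v$, so $C_M=C_K^2$; and $F$ has two places, both with completion isomorphic to the cyclic cubic $L_w$, so $C_F=C_L^2$. Hence $\tfrac{C_FC_K^2}{C_MC_L^2}=1$ exactly, with no residue-characteristic caveat whatsoever --- you even observe this via triviality of $\Theta|_{C_3}$ and then, inconsistently, flag it as the hard part. The genuine input for $D=C_3$ is only on the root-number side, $w_Kw_Mw_L=w(E/K_v)\,w(E/L_w)$, and there one invokes the Kramer--Tunnell invariance $w(E/L_w)=w(E/K_v)$ for a Galois cubic extension, which the paper cites from \cite{KT} (proof of Prop.~3.4) and which it is entirely legitimate to use; it is not a `black box one is trying to avoid'. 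Similarly, in the good-reduction case your worry about the period factor in residue characteristic $3$ is unnecessary: since the N\'eron differential is stable under all base change when $E$ has good reduction, the factor $|\omega/\omega^o_v|_v$ appears in $\tfrac{C_FC_K^2}{C_MC_L^2}$ with exponent $[F:K]+2-[M:K]-2[L:K]=6+2-2-6=0$, so the period contributions cancel on the nose.
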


\begin{proof}
It is easy to see that for $v|\infty$ both the left-hand side and the
right-hand side are trivial%
\footnote{This is true in any Brauer relation, see e.g \cite{Tamroot} Proof of Cor. 3.4},
so suppose $v$ is a finite place.
Then the right-hand side is the same as
$\ord_p(C_F/C_M)\!\mod 2$. Now consider the following cases.

Case 1: $v$ splits in $M/K$. (In particular, this must happen if
$F/M$ is inert above $v$.) As the number of
primes above $v$ in $M$ is even, $C_F$ and $C_M$ are both squares,
and $w_M=1$.
Since in this case $L_v/K_v$ is Galois of odd degree, $w_L=w_K$ by
Kramer--Tunnell \cite{KT}, proof of Prop. 3.4.

Case 2: the prime above $v$ in $M$ splits in $F/M$.
Then $C_F=C_M{}^3$, so $C_F/C_M$ is a square.
Under the action of the decomposition group $D_v$ at $v$, the
$G$-sets $G/\Gal(F/L)$ and $(G/\Gal(F/M))\smallcoprod(G/G)$ are isomorphic.
So the number of primes above $v$ with a given ramification and
inertial degree is the same in $L$ as in $M$ plus in~$K$. It follows that
the local root numbers cancel, $w_Kw_Mw_L=1$.

Case 3: $F/M$ is ramified above $v$ and $E$ is semistable at $v$.
The contributions from $\omega$ cancel modulo squares,
and $w_K=w_L$.
If $E$ has split multiplicative reduction over a prime of $M$ above $v$,
this prime contributes $p$ to $C_F/C_M$ and $-1$ to the root number.
If the reduction is either good or non-split, it contributes to neither.
\end{proof}

\begin{proposition}
The formula \eqref{s3loc} holds in all cases.
\end{proposition}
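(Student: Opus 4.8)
The plan is to settle the remaining cases of \eqref{s3loc} by the ``deforming to totally real fields'' method of \S\ref{s:dtrf}. By Lemma \ref{s3loclem} we may assume $v$ is a finite place at which $E$ has bad reduction and $F$ has a single prime $w$ above it, so $F_w/K_v$ is an $S_3$-extension of local fields. If the reduction is multiplicative, the argument of Case~3 in the proof of Lemma \ref{s3loclem} applies: the inertia group of $w$ in $\Gal(F/K)\cong S_3$ is a non-trivial normal subgroup, hence contains $\Gal(F/M)$, so $F/M$ is ramified at $w$. Thus we may assume that $E$ has additive reduction at $v$; write $p$ for the residue characteristic of $v$.

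First I would globalise the pair $(E,F_w/K_v)$. As in Lemma \ref{2deflem}(1), choose a totally real field $\mathcal{K}$ with a unique place $v_0\mid p$ and an identification $\mathcal{K}_{v_0}\cong K_v$. Realising $F_w$ as the splitting field over $K_v$ of an irreducible cubic $h_v\in K_v[x]$ of non-square discriminant, pick a cubic $h\in\mathcal{K}[x]$ that is $v_0$-adically close to $h_v$ and, at every archimedean place of $\mathcal{K}$, has three distinct real roots (weak approximation). By Krasner's lemma the splitting field $\mathcal{F}$ of $h$ over $\mathcal{K}$ is then a totally real $S_3$-extension with a unique prime $w_0\mid v_0$ and $\mathcal{F}_{w_0}\cong F_w$; write $\mathcal{M},\mathcal{L}$ for its quadratic and cubic subfields, which are again totally real. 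Next, by weak approximation choose an elliptic curve $\tilde{E}/\mathcal{K}$ which (i) is $v_0$-adically so close to $E$ that, by Proposition \ref{continuity} applied over $K_v$ and over the intermediate fields of $F_w/K_v$, every term of \eqref{s3loc} agrees for $\tilde{E}/\mathcal{K}_{v_0}$ and for $E/K_v$; (ii) has non-integral $j$-invariant, say by forcing multiplicative reduction at an auxiliary prime $\mathfrak{p}\ne v_0$ that is unramified in $\mathcal{F}/\mathcal{K}$; and (iii) has good reduction at every prime of $\mathcal{K}$ ramifying in $\mathcal{F}/\mathcal{K}$ other than $v_0$ (a finite set). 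These conditions live at distinct places, so such an $\tilde{E}$ exists.

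Then I would carry out the global-to-local comparison. At each place $v'\ne v_0$ of $\mathcal{K}$, the identity \eqref{s3loc} holds for $\tilde{E}$: if $v'$ is archimedean, or finite and unramified in $\mathcal{F}/\mathcal{K}$, then $\mathcal{F}$ has more than one prime above $v'$ (in the unramified case because the decomposition group is cyclic, hence a proper subgroup of $S_3$), so Lemma \ref{s3loclem} applies --- this covers all $v'\mid\infty$ and $\mathfrak{p}$; and if $v'\ne v_0$ ramifies in $\mathcal{F}/\mathcal{K}$, then $\tilde{E}$ has good reduction at $v'$ and Lemma \ref{s3loclem} applies again. On the other hand, the first equality of Theorem \ref{ithms3glo} --- that is, Theorem \ref{s3rksel} --- for $\tilde{E}$ and $\mathcal{F}/\mathcal{K}$, combined with the $3$-parity conjecture for $\tilde{E}$ over each of $\mathcal{K},\mathcal{M},\mathcal{L}$ (all totally real, with $\tilde{E}$ of non-integral $j$-invariant, so that Theorem \ref{2parthm} applies with $p=3$), gives the global identity \eqref{s3glo} for $\tilde{E}$. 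But \eqref{s3glo} is precisely the product over all places of $\mathcal{K}$ of the local identities \eqref{s3loc}; since all of these except possibly the one at $v_0$ have just been verified, \eqref{s3loc} holds for $\tilde{E}$ at $v_0$ as well, and hence, by the choice of $\tilde{E}$, for $E/K_v$. This is what remained to be shown.

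The step I expect to be the main obstacle is the globalisation: building a totally real $S_3$-extension $\mathcal{F}/\mathcal{K}$ that induces the prescribed $F_w/K_v$ at $v_0$ while at the same time controlling its ramification and leaving room for an elliptic curve $\tilde{E}$ with good reduction at the other ramified primes and with non-integral $j$-invariant. This is where the argument genuinely differs from the $2$-isogeny case of Lemma \ref{2deflem}, in which there was no auxiliary extension to construct; it calls for a careful simultaneous use of Krasner's lemma and weak approximation, but no essentially new idea. A subsidiary point to check is that Proposition \ref{continuity} really does pin down all of $w_K,w_M,w_L$ and the local factors of $C_K,C_M,C_L,C_F$ under the deformation --- it does, because the extensions of $K_v$ occurring are held fixed, so perturbing $E$ over $K_v$ perturbs it over each of them.
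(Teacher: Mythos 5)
Your argument is correct and follows essentially the same route as the paper's: after observing that Lemma \ref{s3loclem}'s proof (Case 3) already disposes of the ``unique prime, multiplicative reduction'' case, you reduce to additive reduction with a unique prime above $v$, globalise the local $S_3$-extension and curve to a totally real $\mathcal K$, pick $\tilde E/\mathcal K$ with non-integral $j$-invariant and controlled reduction, invoke Theorem~\ref{2parthm} and Theorem~\ref{s3rksel} to get the global identity \eqref{s3glo}, apply the lemma at all places other than $v_0$, and deduce \eqref{s3loc} at $v_0$ by cancellation. The only cosmetic difference is that you insist on good reduction at the auxiliary ramified primes whereas the paper is content with semistable reduction there (which Case~3 of the lemma also covers); your more detailed description of the Krasner/weak-approximation step and of the role of Proposition~\ref{continuity} is a welcome expansion of the paper's terse ``same argument as in Proposition~\ref{continuity}.''
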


\begin{proof}
The only case not covered by the lemma above is when $E/K$ has
additive reduction at $v$ and $F/K$ has a unique prime $\tilde v$ above $v$.
We will use a continuity argument to settle this case.

Pick an $S_3$-extension $\cF/\cK$ of totally real number fields with
completions $\cK_{w}=K_v$ and $\cF_{\tilde w}=F_{\tilde v}$ for some prime
$\tilde w|w$ in $\cF/\cK$ (same argument as in Proposition \ref{continuity}).
Choose an elliptic curve $\E/\cK$ which is close enough $w$-adically to $E/K_v$,
with semistable reduction at all places $\ne w$ where $\cF/\cK$ is ramified
and at least one prime of multiplicative reduction.
By `close enough' we mean that the left- and the right-hand sides of
\eqref{s3loc} are the same for $E/K_v$ and $\E/\cK_w$
(Proposition \ref{continuity}).

By the 3-parity conjecture for $\E$
over the intermediate fields of $\cF/\cK$ (Theorem \ref{2parthm})
and Theorem \ref{s3rksel}, we find that \eqref{s3glo} holds.
Since the terms in it agree at all places except possible $w$ by
Lemma \ref{s3loclem},
they must agree at $w$ as well.
This proves \eqref{s3loc} for $\E/\cK_w$ and hence for $E/K_v$ as well.
\end{proof}

This completes the proof of Theorem \ref{imain}.

\section{Parity predictions}
\label{s:predict}

The purpose of this final section is to collect some peculiar predictions of
the parity conjecture concerning ranks of elliptic curves over number fields.

\begin{definition}
Write $H(\tfrac pq)=\max(|p|,|q|)$ for the usual `naive height' of a rational
number. We say that a subset $S\subset\Q$ has density $d$ if
$$
  \lim_{X\to\infty}\, \frac {\#\{a\in S \,|\, H(a)<X \}}{\#\{a\in\Q \,|\, H(a)<X \}} = d.
$$
\end{definition}

There is a folklore `minimalistic conjecture':

\begin{conjecture}
Let $E/\Q(t)$ be an elliptic curve of Mordell-Weil rank $r$.
For $a\in\Q$ write $E_a$ for its specialisation $t\mapsto a$. Then
$$
  \rk E_a/\Q = \bigleftchoice{r}{\text{if }\>w(E_a/\Q)=(-1)^r}{r+1}{\text{otherwise}}
$$
for a set of rational numbers $a$ of density 1.
\end{conjecture}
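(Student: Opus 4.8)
The plan is to establish three facts and combine them: a lower bound $\rk E_a/\Q\ge r$, a parity constraint $(-1)^{\rk E_a/\Q}=w(E_a/\Q)$, and an upper bound $\rk E_a/\Q\le r+1$, each valid for a set of $a\in\Q$ of density one. First I would invoke Silverman's specialization theorem: the specialization homomorphism $E(\Q(t))\to E_a(\Q)$ is injective for all but finitely many $a$, giving $\rk E_a/\Q\ge r$ outside a set of density zero. Next, for the parity constraint I would apply Theorem~\ref{imain} to each $E_a$: provided $\sha_{E_a/\Q(E_a[2])}$ has finite $2$- and $3$-primary parts, we get $(-1)^{\rk E_a/\Q}=w(E_a/\Q)$, so granting finiteness of $\sha$ this holds for density-one $a$ (and unconditionally the same holds with $\rk$ replaced by the $p$-Selmer rank $\rk_p$). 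These two facts already force $\rk E_a/\Q\in\{r,r+2,r+4,\dots\}$ when $w(E_a/\Q)=(-1)^r$ and $\rk E_a/\Q\in\{r+1,r+3,\dots\}$ otherwise.

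The hard part is the upper bound: I would need to show that the set of $a\in\Q$ with $\rk E_a/\Q\ge r+2$ has density zero, and this is exactly the point where no unconditional technique exists. Rather than bound Mordell--Weil ranks directly, I would try to bound Selmer ranks. If $E/\Q(t)$ carries a rational $n$-isogeny, one can run $n$-descent fibrewise and hope to show $\dim\Sel_n(E_a/\Q)\le r+1$ for $100\%$ of $a$; for $n=2$ and for quadratic-twist families this is within reach of the distributional results of Kane and of Klagsbrun--Mazur--Rubin for $\dim_{\F_2}\Sel_2$, and, for the $2^\infty$-Selmer group in Goldfeld-type families, of A.~Smith's theorem. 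For a general non-isotrivial $E/\Q(t)$ and general prime $p$, however, I do not know even that the average of $\dim\Sel_p(E_a/\Q)$ over $a$ of bounded height is finite, let alone that the full distribution is concentrated on $\{r,r+1\}$; passing from a first-moment bound, or from a positive-proportion bound, to a density-one bound is precisely the obstruction.

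Assuming such a Selmer upper bound were in hand (so, conditionally on finiteness of $\sha$ together with a density-one bound $\dim\Sel_p(E_a/\Q)\le r+1$), I would finish as follows: for density-one $a$ we have simultaneously $\rk E_a/\Q\in\{r,r+1\}$ and $(-1)^{\rk E_a/\Q}=w(E_a/\Q)$, and these two statements pin $\rk E_a/\Q$ down to $r$ when $w(E_a/\Q)=(-1)^r$ and to $r+1$ otherwise, which is the claim. I would also note, following Helfgott, that $a\mapsto w(E_a/\Q)$ is equidistributed in $\{\pm1\}$ for a generic family (so that, barring a forced constant sign, each case in the displayed formula has density $\tfrac12$), but this is cosmetic: it plays no role in the logic. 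To summarise, everything except the density-one upper bound on ranks---that is, the lower bound via specialization and the parity input from Theorem~\ref{imain}---is available at worst modulo finiteness of $\sha$, and the whole difficulty of the conjecture is concentrated in that upper bound.
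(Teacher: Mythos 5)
The statement you were asked to prove is not a theorem in the paper: the paper explicitly introduces it as ``a folklore `minimalistic conjecture''' and offers no proof of it. It is listed alongside Goldfeld's conjecture in \S\ref{s:predict} precisely as an open prediction, and the paper proves nothing in its direction. So there is no proof in the paper to compare your attempt against, and you should not expect to produce one.

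That said, your ``proof proposal'' is not really a proof but a correct diagnosis of the state of the art, and as such it is sound. The decomposition into (i) a lower bound $\rk E_a/\Q\ge r$ via Silverman's specialization theorem (injective outside a set of bounded height, hence finite, hence density zero), (ii) the parity constraint $(-1)^{\rk E_a/\Q}=w(E_a/\Q)$, which Theorem~\ref{imain} supplies conditionally on finiteness of the relevant $\sha$ (or unconditionally at the level of $\rk_p$), and (iii) the density-one upper bound $\rk E_a/\Q\le r+1$, is the right way to see the logical structure. You are also right that (iii) is where the conjecture lives: it is completely open for a general non-isotrivial family, and the progress you cite (Kane, Klagsbrun--Mazur--Rubin, Smith) concerns $2$-Selmer distributions in quadratic-twist families and does not touch a generic $E/\Q(t)$. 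One small calibration: you hedge on whether even a first-moment bound on $\dim\Sel_p(E_a/\Q)$ is known; for generic $E/\Q(t)$ it is indeed unknown, so your caution is warranted. In short, you have correctly identified that the conjecture reduces, modulo finiteness of $\sha$, to an average rank/Selmer bound that no one can currently prove, which is exactly why the paper states it as a conjecture and moves on to its consequences.
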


Note that $E_a$ is indeed an elliptic curve for all but finitely many $a$,
so the conjecture makes sense. What is says is that generically, the
rank of a fiber is a sum of the `geometric' contribution from the
points in the family plus an `arithmetic' contribution from the root number.

\subsection{Semistable curves in cubic extensions}

The simplest elliptic curves are {\em semistable\/} ones,
and their root numbers are particularly nice:

\begin{definition}
An elliptic curve over $E/K$ is {\em semistable\/} if it has good
or multiplicative reduction at all primes of $K$.
\end{definition}

Because places of good and non-split multiplicative reduction do not
contribute to the global root number, and infinite and split multiplicative
places contribute $-1$, for semistable $E/K$,
$$
  w(E/K) = (-1)^{\#\{v|\infty\}} (-1)^{\#\{\text{$v$ split multiplicative for $E$\}}}.
$$
Thus, the parity conjecture implies

\begin{conjecture}
If $E/\Q$ is semistable, then
$$
  \rk E/\Q \>\equiv\> 1 \>+\> \#\{\text{\smaller[1]primes $p$ where $E$ has split mult. red.\}}\mod 2.
$$
\end{conjecture}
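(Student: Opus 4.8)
The plan is to read this off from the parity conjecture over $\Q$ together with the local root number computations of \S\ref{s:root}. Granting $(-1)^{\rk E/\Q}=w(E/\Q)$, the task is purely to evaluate the global root number $w(E/\Q)=\prod_v w(E/\Q_v)$ for a semistable curve. First I would note that $\Q$ has a single archimedean place, which contributes the factor $-1$ by Definition \ref{defgloroot}; this is the source of the ``$1+$'' in the congruence.

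Next I would go through the finite places. Since $E/\Q$ is semistable, at every prime $p$ the reduction is good or multiplicative, so only three of the cases analysed in \S\ref{s:root} occur, and the examples there give $w(E/\Q_p)=+1$ for good reduction, $w(E/\Q_p)=+1$ for non-split multiplicative reduction, and $w(E/\Q_p)=-1$ exactly for split multiplicative reduction. Hence the only finite places contributing a nontrivial factor are the split multiplicative ones, and multiplying everything together yields $w(E/\Q)=(-1)^{1+s}$, where $s$ is the number of primes of split multiplicative reduction --- this is precisely the semistable root number formula displayed just before the statement. Substituting into the parity conjecture gives $(-1)^{\rk E/\Q}=(-1)^{1+s}$, i.e. $\rk E/\Q\equiv 1+s\pmod 2$, which is the claim.

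There is no real obstacle: all the input --- the parity conjecture over $\Q$ and the local root number list --- is in hand, and the only thing requiring a little care is the sign bookkeeping (remembering the lone infinite place, and that non-split multiplicative primes contribute $+1$ and so drop out of the count). One may add that the statement is phrased as a conjecture only because the parity conjecture with the Mordell--Weil rank is unconditional over $\Q$ merely granting finiteness of the relevant parts of $\sha$; with the $2$-infinity Selmer rank $\rk_2 E/\Q$ (or $\rk_p E/\Q$ for any $p$) in place of $\rk E/\Q$ the same argument, now using the $p$-parity conjecture over $\Q$, makes the congruence an unconditional theorem.
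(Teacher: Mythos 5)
Your proposal is correct and matches the paper's own derivation: the paper obtains the congruence by substituting the semistable root number formula $w(E/K)=(-1)^{\#\{v\mid\infty\}}(-1)^{\#\{\text{split mult.\ }v\}}$ (itself assembled from the local root number examples of \S\ref{s:root}) into the parity conjecture, with the lone archimedean place of $\Q$ supplying the ``$1+$''. Your closing remark about replacing $\rk$ by $\rk_p$ to make the statement an unconditional theorem via the known $p$-parity conjecture over $\Q$ is a sound addition, consistent with the paper's discussion in \S 2.
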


\begin{example}[\cite{VD}]
Take $E=19A3$ over $\Q$,
$$
  E: y^2+y=x^3+x^2+x, \qquad \Delta=19, \>\>\text{split mult. at 19}.
$$
Two-descent shows that its rank over $\Q$ is 0 (in accordance with the
prediction that it is even).

Now take $K_m=\Q(\sqrt[3]m)$ for some cube-free integer $m>1$. Then the
conjecture asserts that
$$
\begin{picture}(200,28)
  \put(0,18){$\rk E/K_m \equiv \>2 \>\>+\>\>  \leftchoice
    {3}{\text{19 splits in $K_m$}}{1}{\text{otherwise}}  \equiv 1\mod 2.$}
  \put(47,02){$\scriptscriptstyle\#\{v|\infty\}$}
  \put(107,02){$\scriptscriptstyle\#\{v|19\}$}
\end{picture}
$$
So $\rk E/K_m$ should always be odd, in particular positive. We get
the following surprising statement:

\begin{conjecture}
For every $m>1$, not a cube, the equation $y^2+y=x^3+x^2+x$ has infinitely
many solutions in $\Q(\sqrt[3]m)$.
\end{conjecture}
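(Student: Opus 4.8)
The plan is to deduce the statement from the parity conjecture for $E:=19A3$ over the cubic field $K_m:=\Q(\sqrt[3]{m})$. Since $\Q(\sqrt[3]{m})$ depends only on the cube-free part of $m$, I may assume $m\ge 2$ is cube-free, so that $K_m$ is a genuine cubic field. Granting parity, $(-1)^{\rk E/K_m}=w(E/K_m)$; and because $E(K_m)$ is finitely generated and its points other than $O$ are precisely the solutions of $y^2+y=x^3+x^2+x$ in $K_m$, it suffices to show $w(E/K_m)=-1$, for then $\rk E/K_m$ is odd, hence positive, so $E(K_m)$ is infinite. Thus the whole problem reduces to one root-number computation, the only conjectural ingredient being the hypothesis of Theorem~\ref{imain} for $E/K_m$, namely finiteness of the $2$- and $3$-primary parts of $\sha_{E/K_m(E[2])}$ --- this is what the word ``prediction'' refers to.

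To compute $w(E/K_m)$, I would argue as follows. The curve $E/\Q$ is semistable with minimal discriminant $\Delta_E=19$, its unique bad prime being $19$, where the reduction is split multiplicative; semistability and the reduction type (good, resp.\ split multiplicative) are preserved under base change, so $E/K_m$ is semistable with bad reduction exactly at the primes above $19$, all split multiplicative. By the formula for the global root number of a semistable curve recalled above (places of good or non-split multiplicative reduction contribute $+1$, Archimedean and split multiplicative places contribute $-1$),
$$
  w(E/K_m)=(-1)^{\#\{v\mid\infty\}}\,(-1)^{\#\{v\mid 19\}}.
$$
Now $\sqrt[3]{m}\in\R$, so $K_m$ has one real and one complex place and $\#\{v\mid\infty\}=2$, making the first factor $+1$. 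For the places above $19$: since $19\equiv 1\pmod 3$ the residue field $\F_{19}$ contains the cube roots of unity, so modulo $19$ the polynomial $x^3-m$ is a product of three distinct linear factors (when $19\nmid m$ and $m$ is a cube mod $19$) or is irreducible (when $19\nmid m$ and $m$ is not a cube mod $19$), while if $19\mid m$ then $19$ is totally ramified in $K_m$. In every case $\#\{v\mid 19\}\in\{1,3\}$, which is odd, so $w(E/K_m)=-1$, as required.

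I do not expect a genuine obstacle to this conditional argument: its only substance is the elementary, purely arithmetic observation that the number of places of $K_m$ above $19$ is \emph{always} odd, which is forced by $19\equiv 1\pmod 3$ (a cubic extension cannot split a prime into exactly two places once the cube roots of unity lie in the residue field). One can moreover strengthen the appeal to Theorem~\ref{imain}: the isogeny class of conductor $19$ carries a rational $3$-isogeny, so the case $p=3$ of Conjecture~\ref{pisogconj} --- which is a theorem, by the deformation argument of \cite{Kurast} --- applied at each place of $K_m$ and multiplied over all places gives $(-1)^{\rk_3 E/K_m}=w(E/K_m)=-1$ \emph{unconditionally}, so that $\rk_3 E/K_m$ is odd. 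The one thing this does not yield is $\delta_3=0$, i.e.\ finiteness of the $3$-primary part of $\sha_{E/K_m}$; that remaining step is the only real gap, and it is what keeps the ``infinitely many solutions'' conclusion conditional on the Shafarevich--Tate conjecture.
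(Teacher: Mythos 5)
Your argument is correct and coincides with the paper's: compute $w(E/K_m)$ by the semistable root-number formula, use that $K_m$ has two Archimedean places and an odd number $(\in\{1,3\})$ of primes above $19$ — you supply the reason, namely $19\equiv 1\pmod 3$ forces the mod-$19$ factorization of $x^3-m$ to be either irreducible or three distinct linear factors (never a linear times a quadratic), a point the paper leaves implicit — so $w(E/K_m)=-1$, and then parity gives odd, hence positive, rank. Your closing observation that the rational $3$-isogeny in the class $19a$ combined with the proved $p=3$ case of Conjecture~\ref{pisogconj} already yields the \emph{unconditional} statement that $\rk_3 E/K_m$ is odd is a correct strengthening the paper does not record at this point, and you are right that it still falls short of producing rational points without finiteness of the $3$-primary part of $\sha$.
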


It is known that an elliptic curve over $\Q$ acquires rank over
infinitely many fields of the form $\Q(\sqrt[3]m)$ (\cite{Cubic} Thm. 1),
but the full conjecture appears to be completely unapproachable at the
moment.
\end{example}

\subsection{Number fields $K$ such that $w(E/K)=1$ for all $E/\Q$}
\noindent\par\noindent
Let $K=\Q(\sqrt{-1},\sqrt{17})$. This field has a peculiar property
that every place of $\Q$ splits into an even number of places in it (2 or 4):
\begin{itemize}
\item $K$ has 2 (complex) places $v|\infty$.
\item 2 splits in $\Q(\sqrt{17})$, and thus in $K$ as well.
\item 17 splits in $\Q(\sqrt{-1})$, and thus in $K$ as well.
\item Primes $p\ne 2,17$ are unramified in $K/\Q$, so their
decomposition groups are cyclic, $D_p\ne \Gal(K/\Q)=C_2\times C_2$;
so such $p$ split as well.
\end{itemize}
Thus, for {\em any} elliptic curve $E/\Q$,
$$
  w(E/K) = \prod_\p w(E/K_\p) = \prod_p (\pm 1)^{\text{even}} = +1,
$$
and we get
\begin{conjecture}
Every elliptic curve $E/\Q$ has even rank over $\Q(\sqrt{-1},\sqrt{17})$.
\end{conjecture}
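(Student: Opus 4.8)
The plan is to compute the global root number $w(E/K)$, show that it equals $+1$ for \emph{every} elliptic curve $E/\Q$, and then quote Theorem~\ref{imain} to convert this into the rank statement. So the first and main step is to prove $w(E/K)=1$, and the rest is essentially bookkeeping already sketched in the four bullet points preceding the statement.

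For the root number I would start from Definition~\ref{defgloroot}, namely $w(E/K)=\prod_v w(E/K_v)$ over all places $v$ of $K$, each local factor being $\pm 1$, and regroup the product over the place $p$ of $\Q$ (including $p=\infty$) lying below $v$:
$$
  w(E/K)=\prod_p\ \prod_{v\mid p} w(E/K_v).
$$
Since $K/\Q$ is Galois, $\Gal(K/\Q)$ permutes the set of $v\mid p$ transitively; an element carrying $v$ to $v'$ induces an isomorphism of completions $K_v\cong K_{v'}$ over which $E$, being defined over $\Q$, is invariant, and local root numbers are isomorphism invariants, so all the factors $w(E/K_v)$ with $v\mid p$ coincide. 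Hence $\prod_{v\mid p}w(E/K_v)=w(E/K_{v_0})^{g_p}$, where $g_p=\#\{v\mid p\}$. Now I would record that $g_p$ is even for every $p$: the two archimedean places of $K$ are complex; $2$ splits already in $\Q(\sqrt{17})$ and $17$ in $\Q(\sqrt{-1})$, hence both split in $K$; and a prime $p\neq 2,17$ is unramified in $K/\Q$ with cyclic decomposition group $D_p$, hence $D_p$ is a proper subgroup of $\Gal(K/\Q)\cong C_2\times C_2$, so $p$ splits. Therefore each factor is $(\pm 1)^{\mathrm{even}}=1$, and $w(E/K)=1$.

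The last step is to invoke Theorem~\ref{imain} over the field $K$: granting that $\sha_{E/K(E[2])}$ has finite $2$- and $3$-primary parts (in particular, granting finiteness of $\sha$), Theorem~\ref{imain} gives $(-1)^{\rk E/K}=w(E/K)=1$, so $\rk E/K$ is even. I expect no genuinely new hard step: the arithmetic input $w(E/K)=1$ is elementary once one knows that every place of $\Q$ splits into an even number of places of $K$, and all of the real difficulty is already packaged inside Theorem~\ref{imain}. The honest caveat is that the rank statement is therefore conditional; since $K=\Q(\sqrt{-1},\sqrt{17})$ is totally imaginary, the totally real techniques behind Theorem~\ref{2parthm} do not apply, so an unconditional argument along these lines would at best yield the corresponding statement for the $2$- and $3$-primary Selmer ranks of $E/K$.
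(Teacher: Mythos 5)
Your proposal is correct and follows exactly the paper's own route: establish that every place of $\Q$ splits into an even number of places of $K=\Q(\sqrt{-1},\sqrt{17})$, conclude $w(E/K)=\prod_p(\pm1)^{\text{even}}=1$ for every $E/\Q$, and then invoke the parity conjecture (here via Theorem~\ref{imain}). Your added justification that $\Gal(K/\Q)$ acts transitively on the $v\mid p$ and preserves the local root numbers is the correct unstated step behind the paper's displayed equality, and your closing caveat about conditionality is accurate — this is precisely why the paper labels the statement a conjecture.
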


The existence of such number fields was pointed out to us by Rubin.
Note also that the same conjecture may be stated for abelian varieties.
As an exercise, we leave it to the reader to use the same ideas to show
that the parity conjecture implies the following:

\begin{conjecture}
Every elliptic curve over $\Q$ with split multiplicative reduction at 2
has infinitely many rational points over $\Q(\zeta_8)$.
\end{conjecture}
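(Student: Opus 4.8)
The plan is to deduce this from the parity conjecture by showing that $w(E/\Q(\zeta_8))=-1$, so that $\rk E/\Q(\zeta_8)$ is odd and in particular positive, whence $E(\Q(\zeta_8))$ is infinite. I would argue place by place in the factorisation $w(E/\Q(\zeta_8))=\prod_v w(E/\Q(\zeta_8)_v)$, exactly as in the example $\Q(\sqrt{-1},\sqrt{17})$ above. Recall that $\Q(\zeta_8)=\Q(i,\sqrt 2)$ is biquadratic over $\Q$, with $\Gal(\Q(\zeta_8)/\Q)\iso C_2\times C_2$ and quadratic subfields $\Q(i)$, $\Q(\sqrt 2)$ and $\Q(\sqrt{-2})$.

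First, $\Q(\zeta_8)$ is totally complex of degree $4$, so it has exactly two archimedean places, each contributing $-1$, and together they give $+1$. Next, for an odd prime $p$: it is unramified in $\Q(\zeta_8)/\Q$, so its decomposition group is cyclic, hence a proper subgroup of $C_2\times C_2$, and the number of places of $\Q(\zeta_8)$ above $p$ is therefore even ($2$ or $4$); since $\Q(\zeta_8)/\Q$ is Galois all these completions are isomorphic over $\Q_p$, so the local root numbers above $p$ coincide and their product is a square, i.e. $+1$. Finally, $2$ is totally ramified in $\Q(\zeta_8)/\Q$ — each of $\Q(i)$, $\Q(\sqrt 2)$, $\Q(\sqrt{-2})$ is ramified at $2$, so the inertia subgroup of $C_2\times C_2$ is everything — so there is a unique place $v\mid 2$, with $\Q(\zeta_8)_v\iso\Q_2(\zeta_8)$. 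By hypothesis $E/\Q_2$ has split multiplicative reduction; multiplicative reduction is preserved in every finite extension and a split torus stays split, so $E/\Q_2(\zeta_8)$ again has split multiplicative reduction, and the split multiplicative computation of \S\ref{s:root} gives $w(E/\Q_2(\zeta_8))=-1$. Multiplying the three contributions, $w(E/\Q(\zeta_8))=(+1)(+1)(-1)=-1$, and the parity conjecture then gives $(-1)^{\rk E/\Q(\zeta_8)}=-1$, as required.

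I do not expect any genuine obstacle here: the whole argument is a root-number bookkeeping exercise. The one point deserving care is the prime $2$ — one must check that it is totally ramified in $\Q(\zeta_8)/\Q$ (so that there is a single place above it, rather than an even number of places whose contributions would cancel), and that split multiplicative reduction really does persist over the totally ramified quartic $\Q_2(\zeta_8)$. This is also precisely where the hypothesis is used: had $E$ good, non-split multiplicative, or additive reduction at $2$, the contribution at $v$ would be $+1$ (or not forced to be $-1$), and the predicted parity would be the opposite.
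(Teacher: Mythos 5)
Your argument is correct and is exactly what the paper intends — it is stated there as an exercise "to use the same ideas" as the $\Q(\sqrt{-1},\sqrt{17})$ example, and you do precisely that: all contributions to $w(E/\Q(\zeta_8))$ from odd primes and from the two complex places cancel in pairs because places split evenly, while the unique prime above the totally ramified prime $2$ contributes $-1$ from the persisting split multiplicative reduction, giving $w(E/\Q(\zeta_8))=-1$ and hence odd rank under the parity conjecture. The only thing worth noting is that your verification that $2$ is totally ramified (inertia is all of $C_2\times C_2$ since each of $\Q(i),\Q(\sqrt2),\Q(\sqrt{-2})$ is ramified at $2$) and that split multiplicative reduction persists under any finite extension are indeed the two points the exercise hinges on, and you handle both correctly.
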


\subsection{Goldfeld's conjecture over $\Q$}

\begin{definition}
For an elliptic curve
$$
  E/\Q:\>\> y^2 = f(x)
$$
and a (usually square-free) integer $d$, the {\em quadratic twist\/}
of $E$ by $d$ is
$$
  E_d/\Q:\>\> d y^2 = f(x).
$$
\end{definition}
Note that $E\iso E_d$ over $\Q(\sqrt d)$, but not over $\Q$.
Now, if $d_0<0$ is such that all primes $p|2\Delta_E$ split in
$\Q(\sqrt{d_0})$, then it is easy to see that
$$
  w(E_{dd_0}/\Q)=-w(E_d/\Q)\qquad\text{for all square-free $d$.}
$$
In other words, the involution $d\!\leftrightarrow\!dd_0$ on $\Q^\times/\Q^{\times2}$
changes the sign of $w(E_d)$. So

\begin{tabular}{l}
$w(E_d/\Q)=+1$ for 50\% square-free $d$'s,\cr
$w(E_d/\Q)=-1$ for 50\% square-free $d$'s,\cr
\end{tabular}

\noindent
meaning that
\begin{equation}
\label{density}
  \frac{\#\{\text{
    $|d|\le X$ square-free $| w(E_d/\Q)=1$
  }\}}{\#\{\text{
    $|d|\le X$ square-free
  }\}} \lar \frac 12 \quad\text{as $X\to\infty$}.
\end{equation}
The `minimalistic conjecture' above becomes a famous conjecture of Goldfeld:
\begin{conjecture}[Goldfeld]
Let $E/\Q$ be an elliptic curve. Then

\begin{tabular}{ll}
$\rk E_d/\Q=0$    & for 50\% square-free $d$'s,\cr
$\rk E_d/\Q=1$    & for 50\% square-free $d$'s,\cr
$\rk E_d/\Q\ge 2$ & for \phantom{5}0\%  square-free $d$'s.\cr
\end{tabular}
\end{conjecture}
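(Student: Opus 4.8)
The plan is to split the statement into a \emph{parity} part, which is supplied (conditionally on finiteness of $\sha$, or unconditionally for the $p$-infinity Selmer ranks) by the results of these notes, and an \emph{analytic} part, which carries all the genuine difficulty. First, by Theorem~\ref{imain} we have $(-1)^{\rk E_d/\Q}=w(E_d/\Q)$ for every square-free $d$ (and $(-1)^{\rk_p E_d/\Q}=w(E_d/\Q)$ unconditionally). Combining this with the equidistribution of root numbers \eqref{density}, one already gets that $\rk E_d/\Q$ is even for a density-$\tfrac12$ set of square-free $d$ and odd for the complementary density-$\tfrac12$ set. So the only content of Goldfeld's conjecture that remains is the assertion that $\rk E_d/\Q\ge 2$ holds for a set of square-free $d$ of density $0$; equivalently, that among the even-rank twists $100\%$ have rank exactly $0$, and among the odd-rank twists $100\%$ have rank exactly $1$.

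Next I would reduce the rank-$0$ half to an analytic non-vanishing statement: $L(E_d/\Q,1)\ne 0$ for almost all square-free $d$ with $w(E_d/\Q)=+1$. Granting this, the theorem of Kolyvagin (together with modularity over $\Q$) forces $\rk E_d/\Q=0$ and $\sha_{E_d/\Q}$ finite for such $d$, which settles that half \emph{unconditionally}. Symmetrically, for the odd-rank twists I would aim for $L'(E_d/\Q,1)\ne 0$ for almost all square-free $d$ with $w(E_d/\Q)=-1$; by the Gross--Zagier formula together with Kolyvagin this gives $\rk E_d/\Q=1$ and finite $\sha$. Positive-proportion forms of both non-vanishing statements are classical (work of Bump--Friedberg--Hoffstein, Murty--Murty, Iwaniec, Ono--Skinner, and others), and they already yield that ranks $0$ and $1$ each occur for a positive proportion of $d$; what is needed for Goldfeld is the full density-one strength of these statements.

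The route to density-one non-vanishing is via moments: one would establish an asymptotic formula for a \emph{mollified} first moment $\sum_{|d|\le X} M(d)\,L(E_d,1)$ and second moment $\sum_{|d|\le X} M(d)^2\,L(E_d,1)^2$ over the root-number $+1$ family, sharp enough (after a Cauchy--Schwarz argument) to bound the number of exceptional $d$ by $o(X)$ rather than merely by $cX$, and analogously for the derivatives $L'(E_d,1)$ in the root-number $-1$ family. An alternative, closer in spirit to the machinery of these notes, would be to bound the \emph{average} of $\rk_2 E_d/\Q$ over the quadratic-twist family by $\tfrac12$: combined with the parity equidistribution above, an average $2$-Selmer rank of $\tfrac12$ would immediately force $\rk_2 E_d/\Q\ge 2$ (hence $\rk E_d/\Q\ge 2$) to have density $0$, and one would then only need Kolyvagin to upgrade the analytic/Selmer control to actual ranks $0$ and $1$.

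The main obstacle is entirely analytic and is, at present, out of reach: passing from \emph{positive proportion} to \emph{proportion one} in the non-vanishing of $L(E_d,1)$ and $L'(E_d,1)$. The available moment estimates for central values (and central derivatives) in quadratic-twist families are not yet precise enough to control the exceptional set down to density $0$, and the known upper bounds for the average $2$-Selmer rank in such families (which would give an alternative, more arithmetic route) fall short of the bound $\tfrac12$ that the argument above would require. Until one of these analytic inputs is available, the parity results of these notes reduce Goldfeld's conjecture to — but do not resolve — precisely this non-vanishing problem.
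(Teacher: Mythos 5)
The statement you were asked about is labelled \emph{Conjecture} in the paper, and for good reason: it is Goldfeld's conjecture, a famous open problem, and the notes make no claim whatsoever to prove it. What the text around it does is (i) derive the root-number density statement \eqref{density} from a single sign-flipping involution $d\leftrightarrow dd_0$, (ii) observe that the parity conjecture then predicts the ranks should be even and odd each with density $\tfrac12$, and (iii) record what is actually known, namely the lower bounds of Ono--Skinner, Perelli--Pomykala and Stewart--Top, with the explicit remark that obtaining the constant $\tfrac12$ ``seems to be extremely hard.'' There is no proof to compare against.

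Your write-up is therefore not a proof and, to your credit, does not claim to be one: you correctly separate the parity input (which this paper supplies, conditionally via Theorem~\ref{imain}, or unconditionally at the level of $p^\infty$-Selmer ranks since $p$-parity is known over $\Q$) from the analytic input (density-one non-vanishing of $L(E_d,1)$ in the $w=+1$ family and of $L'(E_d,1)$ in the $w=-1$ family, fed into Kolyvagin and Gross--Zagier), and you correctly identify the latter as the genuinely open and presently inaccessible part. That is exactly the state of affairs the paper describes, so your account is consistent with it.

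One small caution about your alternative ``arithmetic'' route: bounding the \emph{average} $2$-Selmer rank by $\tfrac12$ is not quite the right target. Even if the even/odd parities split $50/50$, an average of $\tfrac12$ does not by itself force the set $\{\,d:\rk_2 E_d\ge 2\,\}$ to have density $0$ --- one needs distributional information, not merely a first-moment bound, and moreover for curves with rational $2$-torsion the $2$-Selmer group has an unavoidable contribution that makes the raw ``$\le\tfrac12$'' threshold too optimistic. The arguments that actually get close to Goldfeld along Selmer-theoretic lines (subsequent to this paper) control the full distribution of $2^k$-Selmer ranks, not just an average. So if you keep this paragraph, you should phrase it in terms of showing the $2^\infty$-Selmer rank equals $\min$ allowed by parity for a density-one set of $d$, rather than in terms of an average bound.
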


\noindent
Note that `$0\%$' does not exclude the possibility of $E$ having infinitely
many quadratic twists of rank $\ge 2$. It only says that
$$
r_{\ge 2}(X):=
\#\{\text{
    $|d|\le X$ square-free $| \rk E_d/\Q\ge 2$
  }\}
$$
is $o(X)$ (the denominator in \eqref{density} is $\sim X/\zeta(2)$
for large $X$). In fact, it is known that


\smallskip
\noindent
\begin{tabular}{lllllll}
&$r_{=0}(X)$&$\ge$&$X/\log X$&&(Ono-Skinner \cite{OS})\cr
&$r_{=1}(X)$&$\ge$&$ X^{1-\epsilon}$&&(Pomykala-Perelli \cite{PP})\cr
&$r_{\ge 2}(X)$&$\ge$&$C_E X^{1/7}/\log^2X$&&(Stewart-Top \cite{StT}).\cr
\end{tabular}
\smallskip

\noindent
For some specific elliptic curves it is known that
$r_{=0}(X)\sim C X$ and $r_{=1}(X)\sim C' X$, but to get
$C$ or $C'$ to be $\frac 12$ seems to be extremely hard.

\subsection{No Goldfeld over number fields}

Over number fields Goldfeld's Conjecture has to be formulated differently,
because the `$w=+1$ in 50\% cases'-formula \eqref{density}
may not hold. The simplest counterexample is CM curves:

\begin{example}
Let $K=\Q(i)$ and $E/K: y^2=x^3+x$. This is a curve with complex
multiplication,
$$
  \End_K E\iso\Z[i], \qquad [i](x,y)=(-x,iy).
$$
The set of rational points $E(K)$ is naturally a $\Z[i]$-module, and so is
$E(F)$ for any extension $F/K$.
Because $E(F)\tensor_\Z\Q$ is a $\Q(i)$-vector space, it is
even-dimensional over $\Q$, so $\rk E/F$ is even for every $F\supset K$.
Hence
$$
  \rk E_d/K=\rk E/K(\sqrt d) - \rk E/K \equiv 0\mod 2,
    \qquad \text{all }d\in K^\times/K^{\times2},
$$
in other words every quadratic twist of $E/K$ has even rank.
It is also not hard to prove that $w(E_d/K)=1$ for all $d$, as expected.
\end{example}

The same applies to any CM curve $E/K$ with endomorphisms defined over $K$.
Such a $K$ is automatically totally complex (it contains $\End K$ as
a subring),
the representation $G_K\to\Aut T_l E$ has abelian image,
and $E$ acquires everywhere good reduction after an abelian extension of $K$.
Interestingly, these are precisely the local conditions on an
elliptic curve to guarantee that all of its quadratic twists have the
same root number:

\begin{theorem}[\cite{Evilquad}]
Let $E/K$ be an elliptic curve. Then $w(E_d/K)=w(E/K)$ for all
$d\in K^\times$ if and only if
\begin{itemize}
\item[(a)] $K$ is totally complex, and
\item[(b)] For all primes $\p$ of $K$ the curve $E/K_v$ acquires good
           reduction after an abelian extension of $K_v$.
\end{itemize}
\end{theorem}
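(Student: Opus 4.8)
The plan is to compare the local root numbers of $E$ and of its quadratic twist $E_d$ place by place and to recognise the resulting discrepancy as a product of Hilbert symbols that vanishes globally precisely when (a) and (b) hold. Write $\lambda_v(d)=w(E_d/K_v)/w(E/K_v)\in\{\pm1\}$, so that $w(E_d/K)/w(E/K)=\prod_v\lambda_v(d)$; the factor $\lambda_v$ depends only on the class of $d$ in $K_v^\times/K_v^{\times2}$, and since a global $d$ is a unit at almost all $v$ the product is finite. At an Archimedean $v$ every elliptic curve over $K_v$ has local root number $-1$, so $\lambda_v\equiv1$. At a finite $v\nmid2,3$ of good reduction, twisting by a unit keeps the reduction good (root number $+1$), while twisting by a uniformiser produces reduction of type $\IZS$, whose root number is $\left(\tfrac{-1}{q_v}\right)$; a short check then gives $\lambda_v(d)=(d,-1)_v$ for all $d$. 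Hence for every $d\in K^\times$ one has $\prod_v\lambda_v(d)=\prod_{v\notin S,\,v\nmid\infty}(d,-1)_v\cdot\prod_{v\in S}\lambda_v(d)$, where $S$ is the finite set of primes of bad reduction together with the primes above $2$ and $3$.

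The technical core is a local lemma, to be extracted from the classification of local root numbers: \emph{for a finite place $v$, one has $\lambda_v(d)=(d,-1)_v$ for all $d\in K_v^\times$ if and only if $E/K_v$ acquires good reduction over an abelian extension of $K_v$ (condition (b) at $v$).} For the ``if'' direction, when (b) holds the representation $\rho:=(V_lE)^*\otimes\C$ has abelian image over $K_v$, so $\rho^{ss}=\mu_1\oplus\mu_2$ with $\mu_1\mu_2=\chi^{-1}$ unramified; in the additive (potentially good) case one checks that $\rho^{I_v}=0$ and that both $\mu_i$ are ramified, and then multiplicativity, the unramified-twist formula and $w(\mu)w(\mu^*)=\mu(-1)$ give $w(\rho)=\mu_1(-1)$ and $w(\rho\otimes\chi_d)=(\mu_1\chi_d)(-1)=\mu_1(-1)\,(d,-1)_v$, whence $\lambda_v(d)=(d,-1)_v$. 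In residue characteristic $\ge5$ this is contained in Rohrlich's computations; in residue characteristics $2$ and $3$ it follows from the explicit root number formulas cited in \S\ref{s:root}. For the ``only if'' direction, if $E/K_v$ has multiplicative reduction then the unramified quadratic twist interchanges split and non-split reduction, so $\lambda_v$ is not even a homomorphism (e.g.\ $\lambda_v(u_0)=-1$ for a non-square unit $u_0$, whereas $(u_0,-1)_v=+1$ when $v\nmid2$); the potentially multiplicative case is analogous; and if $E/K_v$ has potentially good reduction with non-abelian image on $V_lE$ --- which forces (b) to fail and, below $2$ and $3$, is the only remaining possibility, the dihedral cases in other residue characteristics also acquiring good reduction only over non-abelian extensions --- one reads off from the same root number tables that $\lambda_v\ne(d,-1)_v$.

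Granting the local lemma, the two implications follow from global class field theory. For sufficiency, assume (a) and (b): at complex places $\lambda_v(d)=1=(d,-1)_v$ since $-1$ is a square in $\C^\times$, and at finite places $\lambda_v(d)=(d,-1)_v$ by (b), so $w(E_d/K)/w(E/K)=\prod_v(d,-1)_v=1$ for every $d$ by the product formula for the Hilbert symbol. For necessity, suppose $\Phi(d):=\prod_v\lambda_v(d)=1$ for all $d\in K^\times$. Fix a finite place $v_1$ and any $a\in K_{v_1}^\times$; by weak approximation over the finite set $S\cup\{v\mid\infty\}$ --- which carries no global constraint, the only one, Hilbert reciprocity, involving all places at once --- choose $d\in K^\times$ congruent to $a$ modulo squares at $v_1$ and to a square at every other place of $S\cup\{v\mid\infty\}$. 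Using $\lambda_v(d)=(d,-1)_v$ at the finite places outside $S$ and the product formula once more, $\Phi(d)=\lambda_{v_1}(a)\,(a,-1)_{v_1}$; since $\Phi\equiv1$, $\lambda_{v_1}(a)=(a,-1)_{v_1}$ for all $a$, so (b) holds at $v_1$, hence everywhere. Then $\Phi(d)=\prod_{v\mid\infty}\lambda_v(d)\cdot\prod_{v\nmid\infty}(d,-1)_v=\prod_{v\mid\infty}(d,-1)_v$. If $K$ had a real place $v_0$, weak approximation would supply $d\in K^\times$ negative at $v_0$ and positive at all other real places, giving $\Phi(d)=(d,-1)_{v_0}=-1$, a contradiction; so $K$ is totally complex and (a) holds.

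The main obstacle is the local lemma in residue characteristics $2$ and $3$ in the additive reduction cases: one must verify both that $\lambda_v$ coincides with $(d,-1)_v$ whenever $E/K_v$ acquires good reduction over an abelian extension, and that it genuinely differs otherwise. This is where one must invoke --- or re-derive --- the explicit classification of root numbers of elliptic curves over $2$- and $3$-adic fields and of their quadratic twists; alternatively, the ``deforming to totally real fields'' technique of \S\ref{s:dtrf}, combined with Theorem~\ref{2parthm}, can be used to reduce several of these local identities to parity statements already known over totally real fields.
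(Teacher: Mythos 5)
The paper states this theorem without proof, attributing it to \cite{Evilquad}; there is thus no in-paper argument to compare against. Your proposal is nevertheless a faithful skeleton of what that proof must look like: set $\lambda_v(d)=w(E_d/K_v)/w(E/K_v)$, identify condition (b) as precisely the local criterion under which $\lambda_v(d)=(d,-1)_v$ identically in $d\in K_v^\times$, observe that $\lambda_v\equiv 1$ at Archimedean places while $(d,-1)_v\equiv 1$ only at complex ones, and then run the product formula for the ``if'' direction and weak approximation plus the product formula for the ``only if'' direction. The reduction of condition (a) to $\Phi(d)=\prod_{v\text{ real}}(d,-1)_v$, and the use of a sign change at a single real place to rule out real embeddings, is the right move, and your weak-approximation localisation at a chosen finite $v_1$ correctly isolates $\lambda_{v_1}(a)(a,-1)_{v_1}$ as the only surviving factor.

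The genuine gap, which you acknowledge, is that the ``local lemma'' --- the equivalence of $\lambda_v\equiv(\cdot,-1)_v$ with condition (b) at each finite $v$ --- is asserted rather than proved. This is not a small omission: it is the entire analytic content of the theorem, and in residue characteristics $2$ and $3$ it requires either the full classification of local root numbers of quadratic twists or a deformation argument as in \S\ref{s:dtrf}. Even away from $2$ and $3$ one must verify the dihedral case carefully: for $v\nmid 2,3$ of additive potentially good reduction, whether (b) holds depends on whether the ramification index $e\in\{2,3,4,6\}$ of the good-reduction extension divides $q_v-1$, and one needs the root number tables to check $\lambda_v\ne(\cdot,-1)_v$ exactly when it does not. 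One smaller correction to your heuristics: at $v\mid 2$ of good reduction, twisting by a non-square unit $u$ need \emph{not} preserve good reduction, since $K_v(\sqrt u)/K_v$ can be ramified there; this is consistent with $(u,-1)_v$ possibly being $-1$ at such places, and is precisely one of the phenomena the $p=2$ analysis must handle. In short, the global scaffolding is correct and matches what \cite{Evilquad} does, but a complete proof would need to supply the local lemma in all reduction types and residue characteristics.
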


For semistable curves the second condition simply says that $E$
must have everywhere good reduction. It is not hard to construct
explicit examples:

\begin{example}
The elliptic curve $E/\Q: y^2=x^3+\frac 54x^2-2x-7$ (121C1) has
minimal discriminant $11^4$ and acquires everywhere good reduction over, e.g.,
$\Q(\sqrt[3]{11})$. If we take for instance $K=\Q(\zeta_3,\sqrt[3]{11})$,
it is totally complex, $E/K$ has everywhere good reduction, so
$$
  w(E_d/K)=w(E/K)=(-1)^{\#\{v|\infty\}}=-1, \qquad \text{for all $d\in K^\times$}.
$$
The parity conjecture implies that every quadratic twist of $E/K$ has
positive rank, or, equivalently, that the rank of $E$ must grow in
{\em every\/} quadratic extension of $K$.

\begin{conjecture}
The curve 121C1 over $K=\Q(\zeta_3,\sqrt[3]{11})$ and all of its quadratic
twists over $K$ have positive rank.
\end{conjecture}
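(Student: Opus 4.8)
The approach is to combine the root number computation just given with the parity conjecture in the form of Theorem~\ref{imain}. Recall that $K=\Q(\zeta_3,\sqrt[3]{11})$ is totally complex with three Archimedean places, and that $E$ (the curve 121C1) has everywhere good reduction over $K$, since it has good reduction away from $11$ and acquires good reduction at $11$ already over $\Q(\sqrt[3]{11})\subset K$. By the theorem of \cite{Evilquad} this forces $w(E_d/K)=w(E/K)=(-1)^{3}=-1$ for every $d\in K^{\times}$. Now fix $d$ and apply Theorem~\ref{imain} to the elliptic curve $E_d$ over $K$: provided the $2$- and $3$-primary parts of $\sha_{E_d/K(E_d[2])}$ are finite, it gives $(-1)^{\rk E_d/K}=w(E_d/K)=-1$, so $\rk E_d/K$ is odd, in particular positive. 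Letting $d$ range over $K^{\times}/K^{\times2}$ gives the conjecture.

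The main obstacle, and the reason this is stated as a conjecture, is the finiteness hypothesis on $\sha$. For an elliptic curve over a field like $K$, which is neither $\Q$ nor totally real, finiteness of $\sha$ is not known in a single instance: the Gross--Zagier/Kolyvagin and Zhang-type inputs underlying Theorem~\ref{2parthm} are unavailable here, and $K$ being totally complex also obstructs any reduction to the totally real case as in \S\ref{s:dtrf}. So unconditionally the conjecture is out of reach; as stated, it is genuinely a consequence of (the $2$- and $3$-primary finiteness of) $\sha$ via Theorem~\ref{imain}, with no evident shortcut.

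What \emph{can} be proved unconditionally is the corresponding statement for Selmer ranks, as soon as an isogeny is available to exploit. Since $E_d$ is $K$-isomorphic to $E$, it carries every $\Q$-rational cyclic isogeny of $E$; if $E$ admits a rational $2$-isogeny, then Theorem~\ref{2isogthm} applied over $K$ gives $(-1)^{\rk_2 E_d/K}=w(E_d/K)=-1$, and if it admits a rational $3$-isogeny, then the $p=3$ case of Conjecture~\ref{pisogconj} (a theorem, see \cite{Kurast}) gives $(-1)^{\rk_3 E_d/K}=-1$; either way $\rk_p E_d/K\ge1$ for all $d$, with the relevant $p\in\{2,3\}$. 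The residual gap is the usual one: passing from $\rk_p E_d/K\ge1$ to $\rk E_d/K\ge1$ requires the $p$-primary part of $\sha_{E_d/K}$ to be finite, which is precisely the ingredient we do not have.
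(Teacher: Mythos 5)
Your first paragraph is precisely the paper's derivation: $K$ is totally complex with three Archimedean places, $E=\,$121C1 has everywhere good reduction over $K$ (having acquired it already over $\Q(\sqrt[3]{11})$), so by the theorem of \cite{Evilquad} (or directly, since there are no bad finite places to contribute) $w(E_d/K)=(-1)^3=-1$ for all $d\in K^\times$, and the parity conjecture then forces every twist to have odd, hence positive, rank. The paper states exactly this and nothing more, which is why it is left as a conjecture; your second paragraph correctly identifies that finiteness of $\sha$ over a totally complex field is the unremovable obstruction.

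Your third paragraph, however, contains two problems. First, the phrase ``Since $E_d$ is $K$-isomorphic to $E$'' is false: $E_d$ and $E$ become isomorphic only over $K(\sqrt d)$, not over $K$ (that is the whole point of a quadratic twist). The correct statement is that a $K$-rational (or $\Q$-rational) cyclic isogeny $\phi\colon E\to E'$ twists to a $K$-rational isogeny $\phi_d\colon E_d\to E'_d$, so the conclusion you want is available, but not for the reason you give. Second, and more seriously, the hypothesis of that paragraph is not satisfied for this curve: the isogeny class 121C consists of two curves joined by an $11$-isogeny, and 121C1 admits no rational $2$- or $3$-isogeny. So neither Theorem~\ref{2isogthm} nor the $p=3$ isogeny theorem applies, your conditional ``if\dots if\dots'' has no true branch, and the closing sentence ``either way $\rk_p E_d/K\ge1$'' asserts a conclusion that has not been established. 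For this particular curve no unconditional Selmer-rank lower bound is claimed in the paper, and none follows from the isogeny machinery developed in \S\ref{s:2isog}--\S\ref{s:pisog}.
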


Here is a very elementary way to phrase this:

\begin{conjecture}
Over $K=\Q(\zeta_3,\sqrt[3]{11})$ the polynomial
$x^3+\frac 54x^2-2x-7\in K[x]$ takes every value in $K^\times/K^{\times2}$.
\end{conjecture}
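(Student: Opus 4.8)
Write $f(x)=x^3+\tfrac54x^2-2x-7$ and $E/K:\ y^2=f(x)$ with $K=\Q(\zeta_3,\sqrt[3]{11})$. For $d\in K^\times$ the quadratic twist $E_d/K:\ dy^2=f(x)$ has a $K$-point with $y\ne 0$ if and only if $f$ represents $d$ in $K^\times/K^{\times2}$ (if $dy_0^2=f(x_0)$ with $y_0\ne 0$ then $f(x_0)\in K^\times$ and $f(x_0)\equiv d$; conversely a representation of $d$ gives such a point). Hence the conjecture is equivalent to: every $E_d/K$ has a $K$-point off $\{y=0\}$. This will follow once we know $E_d(K)$ is \emph{infinite} for every $d$, since then $E_d(K)$ contains a point of infinite order, which is not $2$-torsion and so has $y\ne 0$. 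So the plan is to prove $\rk E_d/K\ge 1$ for all $d$.

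\textbf{Root numbers.} The curve $121C1$ has minimal discriminant $11^4$ and acquires everywhere good reduction over $\Q(\sqrt[3]{11})\subset K$, so $E/K$ has everywhere good reduction; moreover $K$, containing $\zeta_3$, is totally complex. Thus hypotheses (a) and (b) of the theorem of \cite{Evilquad} quoted above hold (condition (b) trivially, since $E/K_v$ already has good reduction for every $v$), and therefore $w(E_d/K)=w(E/K)$ for all $d\in K^\times$. Since $E/K$ has everywhere good reduction, only the Archimedean places contribute to the global root number, each contributing $-1$; as $[K:\Q]=6$ and $K$ is totally complex there are $3$ such places, so $w(E_d/K)=w(E/K)=(-1)^3=-1$ for every $d$.

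\textbf{Conclusion via parity.} Fix $d\in K^\times$. Granting finiteness of the $2$- and $3$-primary parts of $\sha_{E_d/K(E_d[2])}$ (part of the Shafarevich--Tate conjecture), Theorem~\ref{imain} applied to $E_d$ yields $(-1)^{\rk E_d/K}=w(E_d/K)=-1$, so $\rk E_d/K$ is odd, in particular positive. Hence $E_d(K)$ is infinite; choosing any point $(x_0,y_0)\in E_d(K)$ of infinite order we have $y_0\ne 0$ and $f(x_0)=d\,y_0^2\equiv d\pmod{K^{\times2}}$. Since $d\in K^\times$ was arbitrary, $f$ takes every value in $K^\times/K^{\times2}$, which is the assertion. (The case $d=1$, i.e.\ that $f$ represents a nonzero square, is the instance $E_1=E$, covered by the same argument.)

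\textbf{The obstacle.} The genuinely hard step is the last one: it is precisely the parity conjecture for the infinite family $\{E_d/K\}$, and this is beyond current methods. We have no way to control $\rk E_d/K$ unconditionally, and no way to exhibit even a single $K$-point on a given $E_d$, because $K$ is neither totally real nor of CM type, so the potential-modularity/Heegner-point inputs behind Theorem~\ref{2parthm} (Zhang's theorem, Cornut--Vatsal, etc.) do not apply to deduce positivity of rank here. Consequently the proof above is necessarily conditional on finiteness of $\sha$ for these twists; removing that hypothesis would require a fundamentally new idea.
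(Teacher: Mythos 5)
Your derivation is correct and matches the paper's own route: show $E/K$ has everywhere good reduction over the totally complex field $K$, invoke the theorem of \cite{Evilquad} to get $w(E_d/K)=w(E/K)=(-1)^{\#\{v\mid\infty\}}=-1$ for all $d$, then apply the parity conjecture (equivalently Theorem~\ref{imain} under finiteness of $\sha$) to force $\rk E_d/K$ odd, and translate a nontrivial point of $E_d$ into a value $f(x_0)\equiv d\pmod{K^{\times2}}$. The statement is stated as a conjecture in the paper precisely because of the dependence on the parity conjecture that you flag in your "obstacle" paragraph, so you have correctly identified both the argument and why it remains conditional.
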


\end{example}

\subsection{No local expression for the rank}

The reader may have noticed that the above examples rely not so much
on the precise formulae for the root numbers but mostly just on
their existence. In other words, they explore the fact that (conjecturally)
the parity of the Mordell-Weil rank is a `sum of local invariants':

\begin{definition}
Say that the Mordell-Weil rank
(resp. Mordell-Weil rank modulo $n$) is a {\em sum of local invariants\/}
if there is a $\Z$-valued function $(k,E)\mapsto\Lambda(E/k)$
of elliptic curves over local fields%
\footnote{Meaning that if $k\iso k'$ and
  $E/k$ and $E'/k'$ are isomorphic elliptic curves (identifying $k$ with $k'$),
  then $\Lambda(E/k)=\Lambda(E'/k')$.},
such that for any elliptic curve $E$ over any number field $K$,
$$
  \rk E/K = \sum\limits_v \lambda(E/K_v) \qquad
  (\text{resp. } \rk E/K \equiv \sum\limits_v \lambda(E/K_v)\mod n),
$$
the sum taken over all places of $K$ (and implicitly finite).
\end{definition}

The parity conjecture implies that the Mordell-Weil rank
modulo 2 is a sum of local invariants, namely those defined by the local
root numbers,
$$
  (-1)^{\lambda(E/K_v)}:=w(E/K_v).
$$
One might ask whether there is a local expression like this
for the rank modulo 3 or modulo 4, or even for the rank itself.
The answer is `no':

\begin{theorem}[\cite{Antifor}]
\label{thmmw}
The Mordell-Weil rank is not a sum of local invariants.
In fact, a stronger statement holds: for $n\in\{3,4,5\}$
the Mordell-Weil rank modulo $n$ is not a sum of local invariants.
\end{theorem}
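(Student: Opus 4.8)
The plan is to argue by contradiction, along the lines of the ``deforming to totally real fields'' technique of \S\ref{s:dtrf}. Suppose that for some $n\in\{3,4,5\}$ (the case of the full rank being the informal limit $n=\infty$, treated identically) there were a $\Z$-valued function $(\cK,E)\mapsto\lambda(E/\cK)$ of elliptic curves over local fields with $\rk E/K\equiv\sum_v\lambda(E/K_v)\bmod n$ for every elliptic curve $E$ over every number field $K$. The first step, which already requires some care, is to normalise $\lambda$ as far as possible. Using that there is a plentiful supply of rank-zero elliptic curves (over $\Q$ by $2$-descent, and over totally real fields via the Gross--Zagier--Zhang method underlying Theorem \ref{2parthm}), together with the local constancy of all the relevant invariants (Proposition \ref{continuity}) and weak approximation, one forces $\lambda(E/\cK)=0$ whenever $E$ has good reduction at $\cK$; in particular every sum $\sum_v\lambda(E/K_v)$ is finite, so the hypothesis is at least self-consistent, and the same bootstrapping constrains $\lambda$ at places of multiplicative reduction to depend only on the reduction type and $\ord(j)$.

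The second step is to feed in isogenies. Since isogenous elliptic curves over $K$ have equal rank, any $K$-rational isogeny $\phi\colon E\to E'$ yields $\sum_v\bigl(\lambda(E/K_v)-\lambda(E'/K_v)\bigr)\equiv 0\bmod n$ for every $K$, a ``product formula'' for the purely local quantity $\delta_\phi(E/\cK):=\lambda(E/\cK)-\lambda(E'/\cK)$. I would apply this with $n$-isogenies for $n=3,5$ --- available in abundant families because $X_0(3)$ and $X_0(5)$ have genus $0$ --- and, for $n=4$, with a chain of two $2$-isogenies (equivalently, an elliptic curve with full rational $2$-torsion), which is exactly the situation analysed completely in \S\ref{s:2isog}. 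The reason the argument works for $n\in\{3,4,5\}$, rather than for all $n$, is precisely that for these degrees the $n$-isogeny Conjectures \ref{2isogconj} and \ref{pisogconj}, and the attendant $n$-parity statements, are unconditional theorems, so the quantity $\sigma_\phi$ of \S\S\ref{s:2isog}--\ref{s:pisog}, and hence $\delta_\phi$, is pinned down at essentially all places by results already proved.

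The final and hardest step is to exhibit an explicit counterexample for each $n\in\{3,4,5\}$. Imitating Lemma \ref{2deflem}, I would pick a totally real field $K$ with a single prime $v_0$ over the relevant residue characteristic and an elliptic curve $E/K$ with a $K$-rational $n$-isogeny $\phi$, semistable away from $v_0$, with non-integral $j$-invariant, and $v_0$-adically close to a prescribed local curve of additive (or good supersingular) reduction. By Theorem \ref{2parthm} the $n$-parity conjecture holds for $E/K$, and in well-chosen instances the exact value of $\rk E/K$ is forced as well; combined with the first step, which determines all the local terms $\lambda(E/K_v)$ for $v\ne v_0$, and with the isogeny constraint of the second step, this over-determines $\lambda$ at $v_0$ and produces two incompatible congruences modulo $n$. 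That contradiction proves the theorem for $n\in\{3,4,5\}$, hence for every $n$ divisible by one of them, and a fortiori for the rank itself. The main obstacle is this last step: one must realise the abstract obstruction by a genuine elliptic curve whose global rank is computable unconditionally while its local shape at $v_0$ is prescribed, so that the bookkeeping of local invariants at $v_0$ really does come out inconsistent --- threading the needle between ``enough freedom in the deformation'' and ``enough rigidity to control the rank'' is where the totally-real-deformation machinery earns its keep.
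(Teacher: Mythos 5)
Your proposal takes a genuinely different route from the paper, and unfortunately one that does not reach the goal. The paper's actual proof is a short, direct counterexample. Fix $E=480a1/\Q$, i.e.\ $y^2=x(x+2)(x-3)$, and for each $n\in\{3,4,5\}$ construct a Galois extension $F_n/\Q$ with the property that every place of $\Q$ splits into a \emph{multiple of $n$} places in $F_n$: for instance $F_4=\Q(\sqrt{-1},\sqrt{41},\sqrt{73})$, and $F_3$, $F_5$ are degree $9$ and $25$ subfields of suitable cyclotomic fields chosen so that the two ramified primes are $n$-th powers modulo each other. Since $F_n/\Q$ is Galois, all completions of $F_n$ above a fixed place $v$ of $\Q$ are isomorphic, and $E$ over each of them is the same base change of $E/\Q_v$; so the hypothetical local terms $\lambda(E/(F_n)_w)$ for $w\mid v$ are all equal and occur a multiple of $n$ times, forcing $\sum_w\lambda\equiv 0\bmod n$. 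Summing over $v$ gives $\rk E/F_n\equiv 0\bmod n$. But a $2$-descent computation yields $\rk E/F_3=\rk E/F_5=1$ and $\rk E/F_4=6$, which are nonzero modulo $3$, $5$, $4$ respectively. Contradiction, and the case of the unreduced rank follows a fortiori.

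Against this, your program has two genuine gaps. First, the opening step, forcing $\lambda(E/\cK)=0$ at places of good reduction, is not justified: a ``sum of local invariants'' comes with no canonical normalization, and exhibiting many curves of global rank zero determines only certain sums of local terms, never the individual values --- you cannot bootstrap to $\lambda=0$ at good places, nor constrain $\lambda$ at multiplicative places to depend only on reduction type and $\ord(j)$, without additional input you have not supplied. Second, your explanation of why $n\in\{3,4,5\}$ (genus $0$ of $X_0(n)$, unconditional $n$-isogeny parity theorems) is incidental: the paper's restriction is purely a matter of exhibiting explicit fields $F_n$ and being able to carry out the $2$-descent over them, and has nothing to do with isogeny degrees or the deformation-to-totally-real-fields method. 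Those techniques belong to the earlier sections establishing parity; here the heavy machinery is a detour, and you yourself acknowledge that the promised contradiction at $v_0$ is never actually realized. The key idea you are missing is the \emph{splitting trick}: if every place splits into a multiple of $n$ places, the global sum of local invariants is automatically $0$ modulo $n$, and then one simply computes a rank that isn't.
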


\begin{proof}
Take $E/\Q: y^2=x(x+2)(x-3)$, which is 480a1 in Cremona's notation.
Writing $\zeta_p$ for a primitive $p$th root of unity, let
$$
  F_n = \left\{\begin{array}{ll}
    \text{the degree 9 subfield of }\Q(\zeta_{13},\zeta_{103})\quad & \text{if } n=3, \cr
    \text{the degree 25 subfield of }\Q(\zeta_{11},\zeta_{241}) & \text{if }n=5, \cr
    \Q(\sqrt{-1},\sqrt{41},\sqrt{73}) & \text{if }n=4.\cr
  \end{array}
  \right.
$$
Because 13 and 103 are cubes modulo one another, and all other primes are
unramified in $F_3$, every place of $\Q$ splits into 3 or 9 in $F_3$.
Similarly in $F_4$ (resp. $F_5$) every place of $\Q$ splits into a multiple
of 4 (resp. 5) places.
Hence, if the Mordell-Weil rank modulo $n$ were a sum of local
invariants, it would be $0\in\Z/n\Z$ for $E/F_n$.

However, 2-descent shows that $\rk E/F_3=\rk E/F_5=1$ and $\rk E/F_4=6$
(e.g. using Magma over all minimal non-trivial subfields of $F_n$).
\end{proof}

\begin{remark}
\label{lfun}
It is interesting to note that
the $L$-series of the curve $E\!=\,$480a1 used in the proof
over $F=F_4=\Q(\sqrt{-1},\sqrt{41},\sqrt{73})$
is formally a 4th power, in the sense that each Euler factor is:
$$
L(E/F,s) = \textstyle
       1
       \!\cdot\! \bigl( \frac {1}{1-3^{-2s}} \bigr)^4
       \bigl( \frac {1}{1-5^{-2s}} \bigr)^4
       \bigl( \frac {1}{1 + 14\cdot 7^{-2s} + 7^{2-4s}} \bigr)^4
       \bigl( \frac {1}{1 + 6\cdot 11^{-2s} + 11^{2-4s}} \bigr)^4\!...
$$
However, it is not a 4th power of an entire function, as it vanishes
to order~6 at $s=1$.
(Actually,
it is not even a square of an entire function:
a computation shows it has a simple zero at $1+2.1565479...\,i$.)

In fact,
by construction of $F$, for any $E/\Q$ the $L$-series $L(E/F,s)$
is formally a 4th power and vanishes to even order at $s=1$ by the
functional equation. Its square root has analytic
continuation to a domain including $\Re s>\frac 32$,
$\Re s<\frac 12$ and the real axis, and satisfies a functional equation
$s\leftrightarrow 2-s$, but it is not clear whether it has any arithmetic
meaning.
\end{remark}



\begin{thebibliography}{99}

\bibitem{Bar}
A. Bartel,
On Brauer-Kuroda type relations of $S$-class numbers in dihedral extensions,
preprint, arxiv: 0904:2416.

\bibitem{Bir}
B. J. Birch, Conjectures concerning elliptic curves,
Proc. Sympos. Pure Math., Vol. VIII (1965), Amer. Math. Soc., Providence,
R.I, 106--112.


\bibitem{BS}
B. J. Birch, N. M. Stephens,
The parity of the rank of the Mordell-Weil group,
Topology 5 (1966), 295--299.

\bibitem{CasIV}
J. W. S. Cassels, Arithmetic on curves of genus 1, IV,
Proof of the Hauptvermutung,
J. Reine Angew. Math. 211, (1962), 95--112.

\bibitem{CasVIII}
J. W. S. Cassels,
Arithmetic on curves of genus 1. VIII:
On conjectures of Birch and Swinnerton-Dyer,
J. Reine Angew. Math. 217 (1965), 180--199 (1965).

\bibitem{CFKS}
J. Coates, T. Fukaya, K. Kato, R. Sujatha,
Root numbers, Selmer groups and non-commutative Iwasawa theory,
J. Algebraic Geom., 19 (2010), 19--97.


\bibitem{CV}
C. Cornut, V. Vatsal, Nontriviality of Rankin-Selberg L-functions
and CM points,
in: L-functions and Galois representations (Durham, July 2004),
LMS Lecture Note Series 320 (2007), Cambridge Univ. Press, 121--186.

\bibitem{DelC}
P. Deligne, Les constantes des \'equations fonctionnelles,
Seminaire Delange-Pisot-Poitou (Th\'eorie des Nombres) 11e ann\'ee,
1969/70, $n^{o}$ 19 bis,
Secr\'etariat Math\'ematique, Paris, 1970.


\bibitem{Cubic}
T. Dokchitser, Ranks of elliptic curves in cubic extensions,
Acta Arith. 126 (2007), 357--360.

\bibitem{Evilquad}
T. Dokchitser, V. Dokchitser,
Elliptic curves with all quadratic twists of positive rank,
Acta Arith. 137 (2009), 193--197.

\bibitem{Isogroot}
T. Dokchitser, V. Dokchitser,
Parity of ranks for elliptic curves with a cyclic isogeny,
J. Number Theory 128 (2008), 662--679.

\bibitem{Squarity}
T. Dokchitser, V. Dokchitser,
On the Birch--Swinnerton-Dyer quotients modulo squares,
Annals of Math. 172 no. 1 (2010), 567--596.

\bibitem{Selfduality}
T. Dokchitser, V. Dokchitser,
Self-duality of Selmer groups,
Math. Proc. Cam. Phil. Soc. 146 (2009), 257--267.

\bibitem{Tamroot}
T. Dokchitser, V. Dokchitser,
Regulator constants and the parity conjecture,
Invent. Math. 178, no. 1 (2009), 23--71.

\bibitem{Root2}
T. Dokchitser, V. Dokchitser,
Root numbers of elliptic curves in residue characteristic~2,
Bull. London Math. Soc. 40 (2008), 516--524.

\bibitem{Kurast}
T. Dokchitser, V. Dokchitser,
Root numbers and parity of ranks of elliptic curves,
2009, arxiv: 0906.1815.

\bibitem{Antifor}
T. Dokchitser, V. Dokchitser,
A note on the Mordell-Weil rank modulo $n$,
2009, arxiv: 0910.4588.

\bibitem{VD}
V. Dokchitser,
Root numbers of non-abelian twists of elliptic curves,
with an appendix by T. Fisher,
Proc. London Math. Soc. (3) 91 (2005), 300--324.

\bibitem{Fa}
G. Faltings,
Endlichkeitss\"atze f\"ur abelsche Variet\"aten \"uber Zahlk\"orpern,
Invent. Math. 73 (1983), no. 3, 349--366.

\bibitem{FisA}
T. Fisher, Appendix to V. Dokchitser,
Root numbers of non-abelian twists of elliptic curves,
Proc. London Math. Soc. (3) 91 (2005), 300--324.

\bibitem{FH}
S. Friedberg, and J. Hoffstein, Nonvanishing theorems for
automorphic L-functions on $\GL(2)$,
Annals of Math. 142 (2), 1995, 385--423.

\bibitem{FQ}
A. Fr\"ohlich, J. Queyrut, On the functional equation of the Artin
$L$-function for characters of real representations,
Invent. Math. 20 (1973), 125--138.

\bibitem{Gol}
D. Goldfeld, Conjectures on elliptic curves over quadratic fields,
Springer Lect. Notes 751 (1979), 108--118.

\bibitem{Gre}
R. Greenberg, On the Birch and Swinnerton-Dyer conjecture,
Invent. Math. 72, no.~2 (1983), 241--265.

\bibitem{Guo}
L. Guo,
General Selmer groups and critical values of Hecke L-functions,
Math. Ann. 297 no.~2 (1993), 221--233.

\bibitem{Hal}
E. Halberstadt, Signes locaux des courbes elliptiques en 2 et 3,
C. R. Acad. Sci. Paris S\'erie I Math. 326 (1998), no. 9, 1047--1052.

\bibitem{Hel}
H. A. Helfgott, On the behaviour of root numbers in families of
elliptic curves, arXiv: math/0408141v3.

\bibitem{Kim}
B. D. Kim,
The Parity Theorem of Elliptic Curves at Primes with Supersingular Reduction,
Compos. Math. 143 (2007) 47--72.


\bibitem{Kis}
M. Kisin, Local constancy in $p$-adic families of Galois representations,
Math. Z., 230 (1999), 569--593.

\bibitem{Kra}
K. Kramer,
Arithmetic of elliptic curves upon quadratic extension.
Trans. Amer. Math. Soc. 264 (1981), no. 1, 121--135.

\bibitem{KT}
K. Kramer, J. Tunnell, Elliptic curves and local $\epsilon$-factors,
Compos. Math. 46 (1982), 307--352.

\bibitem{Kob}
S. Kobayashi, The local root number of elliptic curves with wild
ramification, Math. Ann. 323 (2002), 609--623.

\bibitem{Kol}
V. A. Kolyvagin, Euler systems, The Grothendieck Festschrift,
Prog. in Math., Boston, Birkhauser (1990).

\bibitem{MR}
B. Mazur, K. Rubin, Finding large Selmer ranks via an arithmetic theory
of local constants,
Annals of Math. 166 (2), 2007, 579--612.

\bibitem{MR2}
B. Mazur, K. Rubin, Growth of Selmer rank in nonabelian
extensions of number fields,
Duke Math. J. 143 (2008) 437--461.

\bibitem{MR3}
B. Mazur, K. Rubin,
Ranks of twists of elliptic curves and Hilbert's tenth problem, 2009,
arxiv: 0904.3709, to appear in Invent. Math.

\bibitem{MilO}
J. S. Milne, On the arithmetic of abelian varieties,
Invent. Math. 17 (1972), 177--190.

\bibitem{MilA}
J. S. Milne, Arithmetic duality theorems,
Perspectives in Mathematics, No. 1, Academic Press, 1986.

\bibitem{Mon}
P. Monsky, Generalizing the Birch--Stephens theorem. I: Modular curves,
Math. Z., 221 (1996), 415--420.


\bibitem{NekE}
J. Nekov\'a\v r,
The Euler system method for CM points on Shimura curves,
in: \hbox{L-functions} and Galois representations (Durham, July 2004),
LMS Lecture Note \hbox{Series} 320 (2007), Cambridge Univ. Press, 471--547.

\bibitem{NekS}
J. Nekov\'a\v r, Selmer complexes, Ast\'erisque 310 (2006).

\bibitem{NekIV}
J. Nekov\'a\v r, On the parity of ranks of Selmer groups IV,
with an appendix by J.-P. Wintenberger, Compos.\ Math. 145 (2009), 1351--1359.

\bibitem{NekSC}
J. Nekov\'a\v r, Some consequences of a formula of Mazur and Rubin
for arithmetic local constants, preprint, 2010.

\bibitem{OS}
K. Ono, C. Skinner, Non-vanishing of quadratic twists of modular
$L$-functions, Invent. Math. 134 (1998), no. 3, 651--660.

\bibitem{PP}
A. Perelli, J. Pomykala, Averages over twisted elliptic $L$-functions,
Acta Arith. 80 (1997), 149-163.


\bibitem{Pop}
F. Pop, Embedding problems over large fields,
Annals of Math. (2) 144 (1996), no. 1, 1--34.

\bibitem{dLR}
T. de La Rochefoucauld,
Invariance of the parity conjecture for $p$-Selmer groups
of elliptic curves in a $D_{2p^n}$-extension,
preprint, arxiv: 1002.0554.

\bibitem{RohG}
D. Rohrlich, Galois Theory, elliptic curves, and root numbers,
Compos. Math. 100 (1996), 311--349.

\bibitem{RohI}
D. Rohrlich, Galois invariance of local root numbers, preprint, 2008.

%

\bibitem{SerZ}
J.-P. Serre, Zeta and $L$-functions, in: Arithmetical Algebraic Geometry,
Proc. Conf. Purdue 1963 (1965), 82--92.

\bibitem{SerP}
J.-P. Serre, Propri\'et\'es galoisiennes des points d'ordre fini des courbes
elliptiques, Invent. Math. 15 (1972), no. 4, 259--331.

\bibitem{SerA}
J.-P. Serre, Abelian $l$-adic Representations and Elliptic Curves,
Addison-Wesley 1989.

\bibitem{ST}
J.-P. Serre, J. Tate, Good reduction of abelian varieties,
Annals of Math. 68 (1968), 492--517.

\bibitem{Sil1}
J. H. Silverman, The Arithmetic of Elliptic Curves,
GTM 106, Springer-Verlag 1986.

\bibitem{Sil2}
J. H. Silverman, Advanced Topics in the Arithmetic of Elliptic Curves,
GTM 151, Springer-Verlag, New York, 1994.

\bibitem{StT}
C. L. Stewart, J. Top, On ranks of elliptic curves and power-free
values of binary forms, J. Amer. Math. Soc. 8 (1995), 943--973.

\bibitem{TatD}
J. Tate, Duality theorems in Galois cohomology over number fields,
Proc. ICM Stockholm 1962, 234--241.

\bibitem{TatC}
J. Tate, On the conjectures of Birch and Swinnerton-Dyer and a
geometric analog, S\'eminaire Bourbaki, 18e ann\'ee, 1965/66, no. 306.

\bibitem{TatF}
J. Tate, Fourier analysis in number fields and Hecke's zeta functions,
Algebraic Number Theory (J. W. S. Cassels and A. Fr\"ohlich, eds.),
Academic Press (London) 1967, 305--347.

\bibitem{TatA}
J. Tate, Algorithm for determining the type of a singular fiber in
an elliptic pencil, in: Modular Functions of One Variable IV,
Lect. Notes in Math. 476, B. J. Birch and W. Kuyk, eds., Springer-Verlag,
Berlin, 1975, 33--52.

\bibitem{TatN}
J. Tate, Number theoretic background, in: Automorphic forms,
representations and L-functions, Part 2
(ed. A. Borel and W. Casselman), Proc. Symp. in Pure Math.
33 (AMS, Providence, RI, 1979) 3-26.



\bibitem{TayO}
R. Taylor, On icosahedral Artin representations II,
American Journal of Mathematics 125 (2003), 549--566.

\bibitem{TayR}
R. Taylor, Remarks on the conjecture of Fontaine and Mazur,
J. Inst. Math. Jussieu 1, 2002, 1--19.


\bibitem{TW}
F. Trihan, C. Wuthrich,
Parity conjectures for elliptic curves over global fields of
positive characteristic, preprint.

\bibitem{Whi}
D. Whitehouse, Root numbers of elliptic curves over 2-adic fields, preprint,
2006, http://www-math.mit.edu/$\sim$dw/maths/elliptic2.pdf.

\bibitem{Win}
J.-P. Wintenberger, appendix to \cite{NekIV}.

\bibitem{ZhaH}
S. Zhang, Heights of Heegner points on Shimura curves,
Annals of Math. 153 (2001), 27--147.

\end{thebibliography}
\end{document}